\documentclass[11pt]{amsart}

\usepackage{eucal,dsfont,txfonts}

\usepackage[text={6.5in,9in},centering]{geometry} 

\newcommand{\inv}{^{\raisebox{.2ex}{$\scriptscriptstyle-1$}}}

\newcommand{\la}{Q}
\newcommand{\id}{\mathcal{I}_\la}
\newcommand{\rd}{\mathcal{R}}
\newcommand{\idp}{\mathcal{I}^+_\la}
\newcommand{\nl}{\mathcal{N}}
\newcommand{\an}{\mathcal{A}_{\la}}

\newcommand{\spc}{\mathrm{Spec}_\la}

\newcommand{\pr}{\mbox{{\small{$\&$}}}}

\usepackage[dvipsnames]{xcolor}   
\usepackage{xparse}
\usepackage{xr-hyper}
\usepackage[linktocpage=true,colorlinks=true,hyperindex,citecolor=teal,linkcolor=blue]{hyperref}

\newtheorem{theorem}{Theorem}[section]
\newtheorem{proposition}[theorem]{Proposition}
\newtheorem{lemma}[theorem]{Lemma}
\newtheorem{corollary}[theorem]{Corollary}
\theoremstyle{definition}
\newtheorem{definition}{Definition}[section]
\newtheorem{remark}[theorem]{Remark}
\newtheorem{example}[theorem]{Example}
\numberwithin{equation}{section}
\newtheorem{cremark}[theorem]{Concluding Remarks}
\setcounter{tocdepth}{3}
\numberwithin{equation}{section}

\begin{document}
	
	\author{Amartya Goswami}
	{\scriptsize{\address{[1] Department of Mathematics and Applied Mathematics, University of Johannesburg, South Africa;
				[2] National Institute for Theoretical and Computational Sciences (NITheCS), South Africa.}}}
	{\scriptsize{\email{agoswami@uj.ac.za}}}
	
	\title{On ideals in quantales, I}
	
	\date{}
	
	\subjclass{06F07; 13A15; 06B23.}
	
	
	
	\keywords{Quantale; Multiplicative lattice; Semiprime ideal; Primary decomposition; Strongly irreducible ideal.}
	
	\maketitle

	\begin{abstract}
		Taking a ring-theoretic perspective as our motivation, the main aim of this series is to establish a comprehensive theory of ideals in commutative quantales with an identity element. This particular article focuses on an examination of several key properties related to ideals in quantale, including prime, semiprime, radical, primary, irreducible, and strongly irreducible ideals. Furthermore, we investigate the primary decomposition problem for quantale ideals. In conclusion, we present a set of future directions for further exploration, serving as a natural continuation of this article.
	\end{abstract}
	
	\section{Introduction}

	As pointed out in \cite{KP08}, a quantale can be regarded as a special case within various algebraic and categorical perspectives. Here are a few examples:
	
	\begin{enumerate}
		\item[$\bullet$]  complete multiplicative lattices,
		
		\item[$\bullet$] complete residuated lattices,
		
		\item[$\bullet$] semirings with infinitary sums,
		
		\item[$\bullet$] thin closed monoidal categories,
		
		\item[$\bullet$] monoids in complete semilattices.
	\end{enumerate}
	
	For the purpose of developing an ideal theory of quantales, we will adopt the first approach mentioned above.
	Ideal theory  in an algebraic structure provides a powerful framework for understanding the structural properties and behaviour of ideals within the algebraic system. It focuses on the investigation of properties and characteristics of ideals in algebraic structures, such as their generation, factorization, and intersection. It explores the interplay between ideals and other fundamental algebraic concepts, such as modules, prime ideals, and quotient structures. One of the primary goals of ideal theory is to establish connections between ideals and the overall structure of the algebraic system in which they reside.

	Although abstract ideal theory has been extensively studied (see, e.g., \cite{And74, AJ84, AJA95,  Dil62, Dil36, War37, WD39, Gos24}) for various classes of lattices by introducing different types of elements (replacing various types of ideals), there is a notable absence of detailed study on semiprime, primary, strongly irreducible, \textit{etc}., types of elements in the context of multiplicative lattices\footnote{ however, definitions of semiprime and primary ideals have been mentioned in \cite[Definition 2.6]{YX13} (see also \cite[Definition 2.2]{LW14}) in order to in order to introduce rough semiprime and rough primary ideals in a quantale. Also, a definition of prime ideal and completely prime ideal has been introduced in \cite[Definition 3 and Definition 4]{BAS16} to show when they coincide.}. It is worth noting that in a lattice without a multiplication operation, the notions of prime and strongly irreducible elements coincide. However, in the context of rings, these two types of ideals are distinct from each other.
	
	The primary objective of this series of papers is to explore the extent to which the ideal theory of rings can be extended to quantales. In many instances, the reader  finds that the proofs of various properties for different classes of ideals in rings can be adapted almost identically for ideals in quantales. However, we shall present complete proofs for all results unless they are trivial. This approach serves two main purposes:
	Firstly, providing comprehensive proofs  contributes to the understanding and advancement of this field of study. By presenting detailed arguments, we aim to enhance the readers' comprehension and facilitate further developments in this area.
	Secondly, we intend to showcase the reliance on multiplicative ideal theory principles by explicitly demonstrating the application of these principles in quantale analysis. Through the presentation of proofs, we aim to emphasize the significance of ideal-theoretic aspects in the study of quantales.
	
	Here is an explanation of the terminology and notation used in this paper.
	The symbol $\mathds{N}$ represents the set of natural numbers, \textit{i.e.}, ${1, 2, 3, \ldots}$. When we have a set $X$ and $S$ is a subset of $X$, we denote the complement of $S$ with respect to $X$ as $X\neg S=\{x\in X\mid x\notin S\}$. The empty set is denoted by $\emptyset$, while the top and bottom elements of a lattice $(\mathcal{L}, \preccurlyeq, \vee, \wedge)$ are represented as $\top$ and $\bot$, respectively.
	Throughout the paper, the default ordering relation for the poset of subsets of a set  is the set inclusion relation, denoted by $\preccurlyeq$.
	
	
	
	The paper is organized as follows. In \textsection \ref{kid}, we provide an introduction to the concept of ideals in quantales and explore the construction of new ideals using operations between them. We examine the general properties of ideals and investigate their behavior under quantale homomorphisms. Additionally, we briefly discuss some key results pertaining to maximal ideals in quantales.
	In \textsection \ref{psi}, we study prime and semiprime ideals in quantales. We examine the notion of radicals of ideals and establish an equivalence between semiprime ideals and radical ideals in a quantale. Throughout this analysis, the significance of multiplicatively closed subsets becomes apparent as they play a crucial role in the development of the theory.
	Moving forward,  \textsection \ref{ppd} focuses on primary ideals in quantales and primary decompositions. We explore the properties of primary ideals and investigate their decompositions. This section sheds light on the fundamental aspects of primary ideals within the context of quantales.
	Lastly, in  \textsection\ref{isii}, we delve into the discussion of irreducible and strongly irreducible ideals in quantales. We present a representation theorem, stating that any ideal in a Noetherian quantale can be expressed as an intersection of a finite number of irreducible ideals. Furthermore, we partially characterize arithmetic quantales by examining their strongly irreducible ideals.
	
	\section{Ideals}

	\subsection{Operations on ideals}\label{kid} 
	
	Quantales can be seen as a generalization of rings, where the underlying additive abelian groups are replaced by sup-lattices and the multiplication part as a semigroup. Since in our investigation, we aim to extend the ideal theory associated with commutative rings with identity, our focus shall be on a specific type of quantales that align with this goal. For a more comprehensive exploration of the algebraic aspects of quantales in general, we recommend  \cite{Ros90}.

	\begin{definition}\label{mld}
		An \emph{integral commutative quantale} is a complete lattice $(\la, \preccurlyeq, \bot, \top)$ endowed with an operation $\pr $, satisfying the following axioms:
		\begin{enumerate}
			
			\item\label{mla} $x\pr  (y\pr  z)=(x\pr  y)\pr  z$,
			
			\item\label{mlc} $x\pr  y = y\pr  x$,
			
			\item\label{jdp} $x\pr  (\bigvee_{\lambda \in \Lambda} y_{\lambda})=\bigvee_{\lambda \in \Lambda}(x\pr  y_{\lambda})$, 
			
			\item \label{mid} $x\pr  \top=x$,
		\end{enumerate}
		for all $x,$ $y,$ $y_{\lambda}$, $z\in \la$, and for all $\lambda \in \Lambda$, where $\Lambda$ is an index set.
	\end{definition}
	
	Throughout this paper, unless otherwise stated, by a ``quantale,'' we shall always refer to a commutative quantale with the top element  $\top$ as the identity with respect to $\pr$. We shall also use the notation $x^n$ to denote $x\pr  \cdots \pr  x$ (repeated $n$ times).

	\begin{remark}
		The aforementioned Definition \ref{mld} was first introduced in \cite{WD39} (also see \cite{Dil62}), where the authors referred to it as a \emph{multiplicative lattice}.
	\end{remark}
	
	
	
	In the following lemma, we compile a number of elementary results on quantales that will  be  utilized in the sequel.

	\begin{lemma}\label{bip}
		Let $\la$ be a quantale. Then the following holds.
		\begin{enumerate}
			\item \label{mul}
			$x\pr y\preccurlyeq x\wedge y$, for all $x, y\in \la$.
			
			\item $x\pr \bot=\bot$, for all $x\in \la$.
			
			\item\label{mon} If $x\preccurlyeq y$, then $x\pr z\preccurlyeq y\pr  z,$ for all $z\in \la$.
			
			\item\label{monj} If $x\preccurlyeq y$ and $u\preccurlyeq v$, then $x\pr u\preccurlyeq y\pr v$, for all $x,$ $y,$ $u,$ $v\in \la$.
			
			\item\label{binom} If $n\in \mathds{N}$ and if $x,$ $y\in \la$, then
			\begin{equation}\label{bif}
				(x\vee y)^{n}=\bigvee_{k=0}^{n} \binom{n}{k}a^{n-k}\pr b^k,\qquad\text{where,}\;\;\; \binom{n}{k}=\dfrac{n!}{k!(n-k)!}.
			\end{equation} 
		\end{enumerate}
	\end{lemma}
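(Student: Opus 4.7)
The plan is to derive (1)--(4) directly from the quantale axioms and then handle (5) by induction on $n$. First, I would obtain (2) from the join-distributive axiom \eqref{jdp} applied with empty index set: since $\bigvee_{\lambda\in\emptyset} y_\lambda = \bot$ in a complete lattice, \eqref{jdp} yields $x\pr\bot=\bot$. For monotonicity (3), given $x\preccurlyeq y$ one has $y = x\vee y$, and \eqref{jdp} applied to the two-element join gives $y\pr z = (x\pr z)\vee(y\pr z)$, whence $x\pr z\preccurlyeq y\pr z$.

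With (3) in hand, (1) follows at once from \eqref{mlc} and \eqref{mid}: by (3), $x\pr y\preccurlyeq x\pr\top = x$, and by commutativity also $x\pr y = y\pr x\preccurlyeq y\pr\top = y$, so $x\pr y$ is a common lower bound of $\{x,y\}$, hence $x\pr y\preccurlyeq x\wedge y$. Part (4) is two applications of (3) chained through the intermediate term $y\pr u$, giving $x\pr u\preccurlyeq y\pr u\preccurlyeq y\pr v$.

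For (5), I would induct on $n$, with the trivial base $n=1$. The inductive step rests on the expansion
\[
(x\vee y)^{n+1} = (x\vee y)\pr(x\vee y)^n,
\]
after which I substitute the inductive hypothesis and distribute using \eqref{mla}, \eqref{mlc}, and \eqref{jdp}. The integer coefficients $\binom{n}{k}$ appearing in \eqref{bif} are to be read as multiplicities of joined copies of the same element; since $\vee$ is idempotent these multiplicities collapse, and the identity reduces to the cleaner form $(x\vee y)^n = \bigvee_{k=0}^n x^{n-k}\pr y^k$. The only (mild) obstacle is the Pascal-style bookkeeping required to verify that every term $x^{(n+1)-k}\pr y^k$ with $0\leq k\leq n+1$ actually appears after distribution; this is routine since both $x\pr(x^{n-k}\pr y^k)$ and $y\pr(x^{n-(k-1)}\pr y^{k-1})$ contribute terms of the form $x^{n+1-k}\pr y^k$, covering every index in the required range.
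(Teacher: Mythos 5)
Your proposal is correct and follows essentially the same route as the paper: parts (1)--(4) all come down to absorbing one term into a join and applying the distributivity axiom, and (5) is the same induction. The only cosmetic differences are that you obtain (2) from the empty-join instance of distributivity rather than from (1), and you derive (1) via monotonicity (3) instead of directly from $x = x\pr(y\vee\top)$; both variants are valid, and your observation that the binomial coefficients collapse by idempotency of $\vee$ makes the ``routine'' inductive step in (5) more explicit than the paper does.
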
 
	
	\begin{proof}
		(1) Notice that $x=x\pr \top=x\pr (y\vee \top)=(x\pr y)\vee x,$ and this implies $x\pr y\preccurlyeq x$. Similarly, we have $x\pr y\preccurlyeq y,$ and hence $x\pr y\preccurlyeq x\wedge y.$

		(2) Applying (1), we obtain $x\pr \bot\preccurlyeq x\wedge \bot =\bot,$ and hence $x\pr \bot=\bot.$
		
		(3) Since $x\preccurlyeq y$, we have $y\pr z=(x\vee y)\pr z=(x\pr z)\vee (y\pr z)$, which implies that $x\pr z\preccurlyeq y\pr z.$
		
		(4) Since $x\preccurlyeq y$ and $u\preccurlyeq v$, by (3), we have $x\pr u\preccurlyeq y\pr u$ and $y\pr u\preccurlyeq y\pr v$. From these, we obtain the claim.
		
		(5) For $n=1$, the formula (\ref{bif}) is trivially true. Suppose the formula is true for $n=k$. Using the conditions of Definition \ref{mld}, it is a routine exercise to check that the formula is also true for $n=k+1$. Hence, by induction, we have the desired claim.
	\end{proof} 
	
	We shall now present the definition of the central concept in this paper, which pertains to the notion of ideals in a quantale.

	\begin{definition}\label{iml} 
		A nonempty subset $I$ in $\la$ is called an \emph{ideal}, if for all $x,$ $y,$ $l\in \la$,
		\begin{enumerate}
			
			\item\label{cuj} $x,$ $ y \in I$ implies that $x\vee y\in I$,  and 
			
			\item\label{cul} $x\in I$ and $l\preccurlyeq x$ implies that $l\in I$.
		\end{enumerate}
		We shall denote the set of all ideals in $\la$ by $\id$, and by $0$, the zero ideal in $\la$. 
	\end{definition}

	\begin{remark}\label{iax}
		One may immediately notice that this definition does not involve any axiom related to the operation $\oplus$. If $l \preccurlyeq x$, where $l \in \la$ and $x \in I$, then according to Lemma \ref{bip}(\ref{mul}), we can deduce that $l\pr x \in I$. This implies that in the context of rings, the property of an ideal mentioned above is automatically satisfied in a quantale, whereas it needs to be explicitly defined as an axiom. In simpler terms, the definitions of an ideal in a complete lattice and in a quantale are identical. 
		
		Note that a definition of an ideal in a (commutative) quantale without identity  requires the following additional axiom: 
		\begin{equation}
			\label{exa} \text{If}\;\; l \preccurlyeq x,\; \text{where}\; l \in \la\; \text{and}\; x \in I,\; \text{and}\; l\pr x \in I,
		\end{equation}
		(see \cite[Definition 2.2]{YX13} and \cite[Definition 2(2)]{BAS16}). This is because we have obtained  (\ref{exa}) using the fact that $x\pr y\preccurlyeq x\wedge y$ holds for all $x, y\in \la$, which we proved assuming the existence of an identity element in $\la$ (see Lemma \ref{bip}(\ref{mul})).
	\end{remark}
	
	Based on Definition \ref{iml}, we shall now introduce a set of operations on ideals in a quantale. These operations  play a fundamental role in the development of our theory.

	\begin{definition}\label{opi}
		Let $\la$ be a quantale.
		\begin{enumerate}
			\item\label{meet}  The \emph{meet} $\bigwedge_{\lambda \in \Lambda} I_{\lambda}$ of ideals $\{I\}_{\lambda\in \Lambda}$  in $\la$ is given by their intersection.
			
			\item\label{join} The \emph{join} of ideals $\{I\}_{\lambda\in \Lambda}$  of $\la$ is defined by
			\begin{equation*}
				\bigvee_{\lambda \in \Lambda} I_{\lambda}=\left\{l\in \la\mid l\preccurlyeq \bigvee_{\text{finite}} x_{\lambda}\;\text{for}\; x_{\lambda}\in I_{\lambda}\right\}.
			\end{equation*}
			
			\item\label{prodc} The \emph{product} of two ideals $I$ and $J$ in $\la$ is defined as
			\begin{equation*}\label{prij}
				I\pr J=\left\{l\in \la\mid l\preccurlyeq \bigvee_{\text{finite}} \left\{x\pr y\mid x\in I, y\in J\right\}\right\}.
			\end{equation*}
			
			\item The \emph{residual} of an ideal
			$I$ by an ideal $J$ is defined by 
			\[(I:J)=\{x\in \la \mid x\pr J\preccurlyeq  I\}=\{x\in \la\mid x\pr j\in I,\;\text{for all}\;j\in J\}.\]
		\end{enumerate}
	\end{definition}

	Through the operations defined above, we can establish that the resulting entities are indeed ideals. This  be demonstrated in the following lemma.

	\begin{lemma}
		Let $\la$ be a quantale. If $I$ and $J$ are ideals in $\la$, then so are $(\mathrm{a})\;I\wedge J,$ $(\mathrm{b})\;I\vee J,$ $(\mathrm{c})\;I\pr J$, and $(\mathrm{d})\;(I:J)$.
	\end{lemma}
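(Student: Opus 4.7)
The plan is to verify, case by case, the two closure conditions from Definition \ref{iml}, after first ensuring each candidate set is nonempty. For nonemptiness, I would observe once and for all that any ideal contains $\bot$: since an ideal $K$ is nonempty, pick $x \in K$; as $\bot \preccurlyeq x$, downward closure (axiom (\ref{cul})) forces $\bot \in K$. This immediately gives $\bot \in I\wedge J$, $\bot \in I\vee J$ (take each $x_\lambda=\bot$), $\bot \in I\pr J$ (take $x\pr y = \bot$), and $\bot\in (I:J)$ (since $\bot\pr j =\bot\in I$ by Lemma~\ref{bip}(2)).

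For (a), both axioms are transparent: $x,y\in I\cap J$ implies $x\vee y$ lies in each of $I$ and $J$ separately, and downward closure is inherited from each. For (b) and (c) the argument is the same in spirit: given two elements bounded above by finite joins of the appropriate form, the join of the two bounds is still a finite join of the same form, which yields closure under $\vee$; downward closure is immediate from transitivity of $\preccurlyeq$. In (c), I would not need distributivity here, because the product is already defined via the downward closure of a set of finite joins of products.

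The genuinely interesting case is (d). For closure under join, given $x_1,x_2\in (I:J)$ and any $j\in J$, I would apply the distributive law (axiom~(\ref{jdp}) of Definition~\ref{mld}) to write
\[
(x_1\vee x_2)\pr j=(x_1\pr j)\vee (x_2\pr j),
\]
and then use that both summands lie in $I$ together with axiom (\ref{cuj}) for $I$. For downward closure, if $l\preccurlyeq x\in(I:J)$, Lemma~\ref{bip}(\ref{mon}) gives $l\pr j\preccurlyeq x\pr j\in I$, and axiom (\ref{cul}) for $I$ then yields $l\pr j\in I$ for all $j\in J$, so $l\in (I:J)$.

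The only mild obstacle is bookkeeping in (b) and (c), where one must check that the ``finite join'' description is robustly preserved under taking pairwise joins and under going below; but this is essentially a definitional unpacking rather than a substantive step. The key algebraic input anywhere in the proof is the distributivity of $\pr$ over arbitrary joins, used in (d).
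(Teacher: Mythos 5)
Your proposal is correct and follows essentially the same route as the paper's proof: (a) is immediate, (b) and (c) reduce to combining finite joins and using transitivity for downward closure, and (d) uses distributivity of $\pr$ over $\vee$ for join-closure and Lemma \ref{bip}(\ref{mon}) for downward closure. Your additional remark that $\bot$ lies in every ideal (giving nonemptiness) is a small point of extra care that the paper leaves implicit.
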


	\begin{proof}
		(a) Since $I$ and $J$ are ideals in $\la$,  if $x,$ $y\in I\wedge J,$ then  $x\vee y \in I\wedge J.$ Moreover, if $z\in I\wedge J$ then for any $x\leqslant z$, $x\in I\wedge J.$	Thus $I\wedge J$ is an ideal. 
		
		(b) To show $I\vee J$ is an ideal, let $l, l'\in I \vee J.$ This implies that $l\preccurlyeq x\vee y$ and $l'\preccurlyeq x'\vee y'$ for some $x,$ $ x'\in I$ and $y,$ $ y'\in J$. Therefore,
		\[l\vee l' \preccurlyeq (x\vee y)\vee (x'\vee y')=(x\vee x')\vee (y\vee y')\in I\vee J.\] Also, if $l\in I\vee J$ and $l'\preccurlyeq l$, then $l'\preccurlyeq l \preccurlyeq x\vee y$ for some $x\in I$ and $y\in J$. This implies that $l'\in I\vee J$. 
		
		(c) The proof of $I\pr J$ is an ideal is similar to (b). 
		
		(d) Finally, to show $(I:J)$ is an ideal, let $l,$ $ l'\in (I:J).$ Then $l\pr J\preccurlyeq  I$ and $l'\pr J\preccurlyeq  I$. This, by Definition \ref{mld}(\ref{jdp}) implies that \[(l\vee l')\pr J=(l\pr J)\vee (l'\pr J)\preccurlyeq  I.\] Suppose $l\in (I:J)$ and $l'\preccurlyeq l.$ Then by Lemma \ref{bip}(\ref{mon}), we have $l'\pr j\preccurlyeq l\pr j$ for all $j\in J$. Hence $l'\pr J\preccurlyeq  l\pr J\preccurlyeq  I$. In other words, $l'\in (I:J)$ as required.
	\end{proof}

	\begin{definition}\label{igs}
		If $S$ is a nonempty subset in a $\la$, then the \emph{ideal generated by} $S$ is defined by \[\langle S\rangle=\left\{x\in \la \mid x\preccurlyeq \bigvee_{i=1}^na_i,\;\text{for some}\;n\in \mathds{N}\;\text{and}\; a_i\in \la \pr S\right\}.\] 
	\end{definition}
	
	In particular, if $S=\{s\}$, then the ideal $\langle \{s\} \rangle$ is called \emph{principal}. We shall write $\langle s \rangle$ for $\langle \{s\} \rangle$. The following lemma is going to be useful in the sequel.

	\begin{lemma}\label{idss}
		Let $\la$ be a quantale.
		If $S$ and $T$ are nonempty subsets of a quantale $\la$, then $\langle S\wedge T\rangle=\langle S\rangle \wedge \langle T \rangle.$
	\end{lemma}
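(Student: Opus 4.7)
The plan is to establish two inclusions, reading $S\wedge T$ as the set of pairwise meets $\{s\wedge t\mid s\in S,\, t\in T\}$. For the forward inclusion $\langle S\wedge T\rangle\preccurlyeq\langle S\rangle\wedge\langle T\rangle$, I would verify that each generator $s\wedge t$ with $s\in S$ and $t\in T$ satisfies $s\wedge t\preccurlyeq s$ and $s\wedge t\preccurlyeq t$; by the downward-closure axiom of an ideal (Definition~\ref{iml}(\ref{cul})), together with the fact that $s\in\langle S\rangle$ and $t\in\langle T\rangle$, it follows that $s\wedge t$ lies in both $\langle S\rangle$ and $\langle T\rangle$. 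Hence $S\wedge T\subseteq\langle S\rangle\wedge\langle T\rangle$, and since the right side is an ideal (by the preceding lemma), it must contain the ideal $\langle S\wedge T\rangle$ generated by $S\wedge T$.

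For the reverse inclusion, I would first simplify the description of generated ideals. By Lemma~\ref{bip}(\ref{mul}) every element of the form $l\pr s$ in Definition~\ref{igs} satisfies $l\pr s\preccurlyeq s$, so membership in $\langle S\rangle$ is equivalent to being bounded above by a finite join $s_1\vee\cdots\vee s_n$ of elements of $S$ (the reverse bound comes from taking $l=\top$). Taking $x\in\langle S\rangle\wedge\langle T\rangle$ and applying this to both memberships yields finite families with $x\preccurlyeq\bigvee_i s_i$ and $x\preccurlyeq\bigvee_j t_j$. Then $x\preccurlyeq\bigl(\bigvee_i s_i\bigr)\wedge\bigl(\bigvee_j t_j\bigr)$, and I would like to conclude $x\preccurlyeq\bigvee_{i,j}(s_i\wedge t_j)$, which would exhibit $x$ as an element of $\langle S\wedge T\rangle$ via Definition~\ref{igs}.

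The step of the previous paragraph is exactly the distributivity of binary $\wedge$ over finite $\vee$, and this is the main obstacle, since it is not automatic in an arbitrary complete lattice underlying a quantale. The natural detour through the product is to combine Definition~\ref{mld}(\ref{jdp}) and Lemma~\ref{bip}(\ref{mul}): one has
\[
\bigl(\textstyle\bigvee_i s_i\bigr)\pr\bigl(\textstyle\bigvee_j t_j\bigr)=\bigvee_{i,j}(s_i\pr t_j)\preccurlyeq\bigvee_{i,j}(s_i\wedge t_j),
\]
together with the monotonicity statement Lemma~\ref{bip}(\ref{mon}). Bridging from this product bound back to $x$ itself is the delicate point, and I expect the author's proof either to invoke meet/join distributivity in the lattice, or to use the identity $\top$ and the quantale axioms to promote the bound on the product to a bound on $x$.
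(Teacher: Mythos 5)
Your proposal follows the paper's proof exactly: the same easy forward inclusion, and the same reduction of the reverse inclusion to the bound $x\preccurlyeq\bigl(\bigvee_{i}a_i\bigr)\wedge\bigl(\bigvee_{j}b_j\bigr)$. The step you flag as the delicate one is precisely where the paper's argument stops: after displaying that inequality it simply asserts ``in other words, $x\in\langle S\wedge T\rangle$'' with no further justification. So there is no hidden trick in the paper for you to have missed; your honest identification of the missing step is the correct diagnosis, and your (more careful) treatment of the forward inclusion via downward closure is if anything an improvement on the paper's ``$S\wedge T\preccurlyeq S$''.

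Moreover, your suspicion is right that the missing step amounts to distributivity of binary meet over the finite joins involved, and this is not available in a general quantale, so the lemma as stated (reading $S\wedge T$ as the set of pairwise meets, as you do) is actually false. Take $\la$ to be the quantale of ideals of $R=k[x,y]/(x,y)^2$, with $S=\{(x),(y)\}$ and $T=\{(x+y)\}$. Every pairwise meet is the zero ideal of $R$, since $(x)\cap(x+y)=(y)\cap(x+y)=0$, so $\langle S\wedge T\rangle$ is the bottom of $\mathcal{I}_\la$; on the other hand
\[
(x+y)\preccurlyeq (x)\vee(y)\quad\text{and}\quad (x+y)\preccurlyeq (x+y),
\]
so $(x+y)\in\langle S\rangle\wedge\langle T\rangle$. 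The detour through the product that you sketch cannot repair this, because $\bigl(\bigvee_i s_i\bigr)\pr\bigl(\bigvee_j t_j\bigr)$ only bounds the product, not $x$ itself, and nothing in the quantale axioms promotes that to a bound on $x$. What survives in general is the inclusion $\langle S\wedge T\rangle\preccurlyeq\langle S\rangle\wedge\langle T\rangle$, which is all that the later application in Theorem \ref{cep}(5)(a) actually requires; the full equality holds if the underlying lattice is a frame, or in the singleton case $S=\{a\}$, $T=\{b\}$, where your argument closes immediately since $x\preccurlyeq a$ and $x\preccurlyeq b$ give $x\preccurlyeq a\wedge b$.
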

	
	\begin{proof}
		Since $S\wedge T\preccurlyeq S$ and $S\wedge T\preccurlyeq T,$ we immediately have $\langle S\wedge T\rangle \preccurlyeq \langle S\rangle \wedge \langle T \rangle.$ For the converse, let $x\in \langle S\rangle \wedge \langle T \rangle.$ Then $x\preccurlyeq \bigvee_{i=1}^n a_i$ and $x\preccurlyeq \bigvee_{j=1}^m b_i$, for some $a_i\in \la \pr S$ and $b_j\in \la\pr  T$, where $1\leqslant i\leqslant n$ and $1\leqslant j \leqslant m$. This implies that, \[x\preccurlyeq \left( \bigvee\limits_{i=1}^n a_i\right)\wedge \left(\bigvee\limits_{j=1}^m b_i \right),\] in other words, $x\in \langle S\wedge T\rangle.$
	\end{proof}

\begin{remark}
	In due course, we shall see results only with meet operations as in the above lemma, and with the appropriate terminologies and definitions,  similar results  can be proved for joins. 
\end{remark}

Before we proceed studying properties, let us see some examples of ideals in various quantales. Following a similar strategy, one can obtain examples of specific kind of ideals that we are going to discuss in this paper.

\begin{example}

	Let $X$ be a nonempty set and consider the power set $\mathcal{P}(X)$ ordered by inclusion $\subseteq$, with
	join as union,
	meet as intersection,
	top element as $X$, and bottom element as the $ \emptyset$.
	Then $\mathcal{P}(X)$ forms a commutative unital quantale.
	Let $X = \{1,2,3\}$. Then
	$
	I := \{ A \subseteq X \mid A \subseteq \{1\} \} = \{\emptyset, \{1\}\}
	$
	is an ideal in $\mathcal{P}(X)$.
\end{example}	

\begin{example}
	Let $X$ be a set, and define $Q := \mathcal{P}(X \times X)$ with join as union,
	multiplication as relational composition: $R \& S := \{ (x,z) \mid \exists y \in X,\; (x,y) \in R,\; (y,z) \in S \},$ unit as the identity relation $\{ (x,x) \mid x \in X \}$, and the bottom element as the $\emptyset$. It is easy to see that 	This is a (noncommutative) quantale. Let $X = \{1,2\}$, and let $R = \{(1,1), (1,2)\}$, $S = \{(2,1)\}$. Then
	$
	R \& S = \{ (1,1) \}.
	$
	Define
	$
	I := \{ R \subseteq X \times X \mid \text{no pair of form } (1,2) \text{ occurs in } R \}.
	$
	Then $I$ is an ideal.
\end{example}

\begin{example}
	Let $(P, \leqslant)$ be a poset. The set of lower sets
	$
	\mathcal{L}(P) := \{ L \subseteq P \mid x \in L,\, y \leq x \Rightarrow y \in L \}
	$
	is a complete lattice.
	Define multiplication by intersection: $A \& B := A \cap B$.
	Let $P = \mathds{N}$ with usual order. Then
	$
	I := \{ L \in \mathcal{L}(\mathds{N}) \mid \max(L) \leqslant 5 \}
	$
	is an ideal in $\mathcal{L}(\mathds{N})$.		
\end{example}

\begin{example}
	Let $X$ be a topological space. The set of open subsets $\mathcal{O}(X)$ forms a frame, and hence, a quantale.
	Let $X = \mathds{R}$ with the usual topology. Then the collection
	$
	I := \{ U \in \mathcal{O}(\mathbb{R}) \mid U \subseteq (-\infty, a) \text{ for some } a \in \mathbb{R} \}
	$
	is an ideal in $\mathcal{O}(\mathds{R})$.		
\end{example}

\begin{example}
	Consider $Q = [0,1]$ with usual order. Define
	join as $\max$,
	multiplication as $a \& b := ab$,
	unit as $1$, and the bottom element as $0$.		
	For fixed $r \in (0,1)$, define
	$
	I := [0,r] = \{ x \in [0,1] \mid x \leqslant r \}.
	$
	Then $I$ is an ideal in the quantale $([0,1], \leqslant, \max, \cdot)$.
\end{example}
	
	The algebraic manipulation of ideals in a quantale closely resembles that of a commutative ring. We can draw analogies between the following ideal-theoretic relations and their counterparts in the elementwise setting in a multiplicative lattice (see \cite{WD39, Dil62}).
	
	\begin{proposition}\label{bpi}
		Let $I$, $\{I_{\lambda}\}_{\lambda\in \Lambda},$ $J$, $\{J_{\lambda}\}_{\lambda\in \Lambda},$ and $K$ be ideals in a quantale $\la$. Then the following holds.
		\begin{enumerate}
			
			\item\label{idas} $I\pr (J\pr K)=(I\pr J)\pr K$.
			
			\item\label{idco} $I\pr J=J\pr I$. 
			
			\item\label{lii} $\la \pr I=I.$ 
			
			\item $0\pr I=0.$
			
			\item\label{ijd} $I\pr \left(\bigvee\limits_{\lambda \in \Lambda} J_{\lambda}\right)=\bigvee\limits_{\lambda \in \Lambda}(I\pr J_{\lambda}).$
			
			\item\label{mip} $I\pr J\preccurlyeq  I\wedge J$.
			
			\item\label{ijks} $I\pr (J\wedge K)\preccurlyeq (I\pr J)\wedge (I\pr K).$
			
			\item $(I\vee J)\pr (J\vee K)\preccurlyeq (I\pr J)\vee K.$
			
			\item If $I\vee K=J\vee K=\la$, then $((I\pr J)\vee K)=\la.$
			
			\item If $I\vee K=\la$, then $(I\wedge J)\vee K=J\vee K.$
			
			\item\label{ijji} $(I: J)\pr J\preccurlyeq  I.$
			
			\item\label{iicj} $I\preccurlyeq  (I:J).$
			
			\item $J\preccurlyeq I$ if and only if $(I : J)=\la$.
			
			\item $(I: \la)=I.$
			
			\item $I\preccurlyeq ((I\pr J) :J).$

			\item\label{inqi} $\left(\bigwedge\limits_{\lambda \in \Lambda} I_{\lambda}: J\right)=\bigwedge\limits_{\lambda \in \Lambda}(I_{\lambda}:J).$
			
			\item $\bigwedge\limits_{\lambda \in \Lambda}(I:J_{\lambda})\leqslant\left(I: \bigvee\limits_{\lambda \in \Lambda}J_{\lambda}\right).$
			
			\item $((I:J):K)=(I: (J\pr K)).$
			
			\item $(I: J)=(I:(I\vee J)).$
			
			\item $(I:J)=((I \wedge J): J).$
		\end{enumerate}
	\end{proposition}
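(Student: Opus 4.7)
The plan is to verify each clause directly, grouping them by proof technique. The product identities (\ref{idas})--(\ref{ijd}) all follow by element-chasing via Definition \ref{opi}(\ref{prodc}): a typical member of $I \pr J$ is bounded by a finite join of two-fold products $x \pr y$ with $x \in I$, $y \in J$, so associativity, commutativity, and infinitary distributivity lift directly from the quantale axioms in Definition \ref{mld}. Clause (\ref{lii}) uses Definition \ref{mld}(\ref{mid}) in one direction and (\ref{mip}) in the other, while (4) reduces to $x \pr \bot = \bot$ from Lemma \ref{bip}.

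Clauses (\ref{mip})--(8) are containment inequalities proved by elementwise bounding. Clause (\ref{mip}) is immediate from Lemma \ref{bip}(\ref{mul}) applied to each generator $x \pr y$ of $I \pr J$, together with downward closure of $I \wedge J$. Clause (\ref{ijks}) uses monotonicity (Lemma \ref{bip}(\ref{mon})) applied to $J \wedge K \preccurlyeq J$ and $J \wedge K \preccurlyeq K$. Clause (8) is obtained by expanding the left-hand side via (\ref{ijd}) into four summands and absorbing those not already present in the right-hand side using (\ref{mip}).

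Clauses (9) and (10) exploit the coprimality hypothesis $I \vee K = \la$, which by Definition \ref{opi}(\ref{join}) yields $\top \preccurlyeq x \vee k$ for suitable $x \in I$, $k \in K$. For (9), picking also $y \in J$, $k' \in K$ with $\top \preccurlyeq y \vee k'$ gives
\begin{equation*}
\top = \top \pr \top \preccurlyeq (x \vee k) \pr (y \vee k') = (x \pr y) \vee (x \pr k') \vee (k \pr y) \vee (k \pr k'),
\end{equation*}
in which $x \pr y \in I \pr J$ and the other three summands lie in $K$ by Lemma \ref{bip}(\ref{mul}) and downward closure. For (10), the nontrivial inclusion $J \vee K \preccurlyeq (I \wedge J) \vee K$ follows from $J = J \pr \la = J \pr (I \vee K) = (J \pr I) \vee (J \pr K) \preccurlyeq (I \wedge J) \vee K$, applying (\ref{ijd}) and (\ref{mip}).

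Clauses (\ref{ijji})--(20) are routine residual identities verified directly from the defining condition $(I:J) = \{x \in \la \mid x \pr J \preccurlyeq I\}$, combined with monotonicity (Lemma \ref{bip}(\ref{mon})) and distributivity of $\pr$ over joins (Definition \ref{mld}(\ref{jdp})). The most substantive is (\ref{ijji}): $(I:J) \pr J \preccurlyeq I$ because every generator $x \pr j$ with $x \in (I:J)$ and $j \in J$ lies in $I$ by definition; the remaining clauses are formal manipulations of the defining inclusion, for instance $(I : J) = (I : (I \vee J))$ follows by combining (\ref{iicj}) with the residual analogue of (\ref{inqi}). The main potential obstacle is keeping careful track of the finite-join presentations implicit in Definition \ref{opi} throughout (\ref{ijd}) and (8)--(10); once the ideal-theoretic distributivity (\ref{ijd}) is in hand, the remaining calculations are essentially mechanical.
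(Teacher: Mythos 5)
Your proposal is correct and follows essentially the same route as the paper: every clause is verified by element-chasing through Definition \ref{opi}, using Lemma \ref{bip} for monotonicity and $x\pr y\preccurlyeq x\wedge y$, and the distributivity axiom for the join/product interactions; your direct coprimality argument for (9) and the computation $J=J\pr(I\vee K)\preccurlyeq (I\wedge J)\vee K$ for (10) are only cosmetic variants of the paper's derivations. One caveat: in clause (8) as printed, expanding $(I\vee J)\pr(J\vee K)$ produces the summand $J\pr J$, which is not absorbed into $(I\pr J)\vee K$ by (\ref{mip}); the paper's own proof actually establishes the variant $(I\vee K)\pr(J\vee K)\preccurlyeq (I\pr J)\vee K$ (the statement appears to contain a typo), and it is for that version that your four-summand absorption argument goes through and feeds correctly into (9).
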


	\begin{proof}
		(1) Follows from Definition \ref{mld}(\ref{mla}).
		
		(2) Follows from Definition \ref{mld}(\ref{mlc}).
		
		(3) From Remark \ref{iax}, it follows that $\la \pr I\preccurlyeq  I$. Indeed, $x\in \la,$ $i\in I$ implies $x\pr i\preccurlyeq x\wedge i \preccurlyeq i\in I$. By Definition \ref{mld}(\ref{mid}), $i=\top\pr i\in \la \pr I$ for all $i\in I$, proving that $I\preccurlyeq  \la\pr  I$.
		
		(4) Let $i\in I$. Then $\bot\pr i\preccurlyeq \bot\wedge i=\bot\preccurlyeq i$ implies that $\bot\pr i\in I$, and hence, $0\pr I\preccurlyeq  I$. Since $\bot\preccurlyeq x$ for all $x\in \la$, in particular, $\bot\preccurlyeq \bot\pr i$ for all $i\in I$, showing that $0\preccurlyeq  0\pr I$.
		
		(5) Follows from Definition \ref{mld}(\ref{jdp}).
		
		(6) Follows from Remark \ref{iax}.
		
		(7) Since $J\wedge K\preccurlyeq  J$ and  $J\wedge K\preccurlyeq  K,$ by Lemma \ref{bip}(\ref{mon}), we obtain $I\pr (J\wedge K)\preccurlyeq (I\pr J)\wedge (I\pr K).$
		
		(8) For all $i\in I,$ $j\in J$, and $k\in K$, applying Definition \ref{mld}(\ref{jdp}), we have \[(i\vee k)\pr (j\vee k)=(i\pr j)\vee (i\vee j \vee k)\pr k\in (I\pr J)\vee K.\]
		
		(9) From (8), it follows that $\la \preccurlyeq ((I\pr J)\vee K),$ whereas the other inclusion is trivial.
		
		(10) Since $I\wedge J\preccurlyeq  J$, we have $(I\wedge J)\vee K\preccurlyeq  J\vee K$. For the other half of the inclusion, we have
		\begin{align*}
			J\vee K&=(I\vee K)\pr (J\vee K)\\& =I\pr (J\vee K)\vee K\pr (J\vee K)\\& \preccurlyeq I\pr (J\vee K)\vee K\\& =(I\pr J)\vee ((I\pr K)\vee K)\\& =(I\pr J)\vee K\\& \preccurlyeq (I\wedge J)\vee K.	
		\end{align*}
		
		(11) If $x\in (I:J)\pr J,$ then for some finite set $\{1, \ldots, n\}$, we have $x\preccurlyeq \bigvee_{i=1}^n l_i\pr j_i\in J$, where $l_i\in \la$ and $j_i\in J$.
		
		(12) If $i\in I$, then by Lemma \ref{bip}(\ref{mon}), $i\pr J\preccurlyeq  I\pr J$, and since $I$ is an ideal, by Remark \ref{iax}, $i\pr J\preccurlyeq  I$.
		
		(13) If $J\preccurlyeq I$ then by applying (3),we have $\la\pr  J=J\preccurlyeq  I$. Hence $(I:J)=\la$. Conversely, if $(I:J)=\la$, then by Definition \ref{mld}(\ref{mid}), $\top\pr j=j\in I$ for all $j\in J$.
		
		(14) Follows from (3).
		
		(15) Follows by Lemma \ref{bip}(\ref{mon}).
		
		(16) The desired identity follows from the following chain of equivalent statements:
		\[l\in \left(\bigwedge_{\lambda \in \Lambda} I_{\lambda}: J\right) \Leftrightarrow l\pr J\preccurlyeq  \bigwedge_{\lambda \in \Lambda} I_{\lambda}\Leftrightarrow l\pr J\preccurlyeq  I_{\lambda}, \forall \lambda \in \Lambda\Leftrightarrow l\in \bigwedge_{\lambda \in \Lambda}(I_{\lambda}:J).\]
		
		(17) Note that
		\begin{align*}
			l\in \bigwedge_{\lambda \in \Lambda}(I:J_{\lambda}) \Rightarrow l\pr J_{\lambda}\subseteq I,\forall \lambda \in \Lambda\Rightarrow \bigvee_{\lambda \in \Lambda} l\pr J_{\lambda}\subseteq I\Rightarrow l\pr \left(  \bigvee_{\lambda \in \Lambda} J_{\lambda}\right)\subseteq I\Rightarrow l\in (I: \bigvee_{\lambda \in \Lambda} J_{\lambda}).\end{align*}
		
		(18) Observe that $l\in ((I:J):K)\Leftrightarrow l\pr K\preccurlyeq  (I:J)\Leftrightarrow (l\pr K)\pr J\preccurlyeq  I\Leftrightarrow l\pr ((J\pr K))\subseteq I\Leftrightarrow l\in (I:J\pr K).$
		
		(19) Note that 
		\begin{align*}
			x\in (I:J)\Rightarrow x\pr J\preccurlyeq  I\Rightarrow x\pr I\vee x\pr J\preccurlyeq  x\pr I\vee I\Rightarrow x\pr (I\vee J)\preccurlyeq I\Rightarrow x\in (I: (I\vee J)),
		\end{align*}
		and for the other half of the inclusion, 
		\begin{align*}
			x\in (I:(I\vee J))\Rightarrow x\pr (I\vee J)\preccurlyeq I\Rightarrow x\pr I\vee x\pr J\preccurlyeq  I\Rightarrow x\pr J\preccurlyeq  I.
		\end{align*}
		
		(20) Finally, we have $x\in (I:J)\Leftrightarrow x\pr J\preccurlyeq  I \Leftrightarrow x\pr J\preccurlyeq  I\wedge J \Leftrightarrow x\in ((I\wedge J):J).$
	\end{proof}
	
	The well-known fact is that the set of ideals in a commutative ring with identity forms a quantale, following the definition provided in Definition \ref{mld}. Now, we shall proceed to demonstrate in the following theorem that the set of ideals in a quantale possesses an analogous structure.

	\begin{theorem}
		If $\la$ is a quantale, then $(\id, \preccurlyeq, \vee, \wedge, \bot, \top, \pr )$ is a quantale.
	\end{theorem}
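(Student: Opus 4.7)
The plan is to observe that this theorem is essentially a consolidation: almost every one of the quantale axioms for $(\id,\preccurlyeq,\vee,\wedge,\bot,\top,\pr)$ has already been proved, so the only substantive content is to confirm the underlying complete lattice structure on $\id$ and then invoke the appropriate parts of Proposition~\ref{bpi}.

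First I would verify that $(\id,\preccurlyeq)$ is a complete lattice. Arbitrary meets exist because the intersection of any family of ideals is again an ideal --- both closure conditions in Definition~\ref{iml} are preserved by intersection. For arbitrary joins, I would argue that the set $\bigvee_{\lambda\in\Lambda}I_{\lambda}$ introduced in Definition~\ref{opi}(\ref{join}) is the supremum: it is an ideal by a short direct check, it contains each $I_{\lambda}$ (take a singleton ``finite'' join), and any ideal $K$ containing every $I_{\lambda}$ must contain every finite join $\bigvee_{\text{finite}}x_{\lambda}$ of elements $x_{\lambda}\in I_{\lambda}$ and, being downward closed, must contain $\bigvee_{\lambda}I_{\lambda}$. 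The top element of $\id$ is $\la$ itself, which trivially satisfies the axioms of Definition~\ref{iml}, and the bottom is the zero ideal $0=\{\bot\}$, which is contained in every ideal since every ideal is nonempty and downward closed.

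The four axioms of Definition~\ref{mld} for the operation $\pr$ on $\id$ can then be read off directly from Proposition~\ref{bpi}: associativity from item~(\ref{idas}), commutativity from item~(\ref{idco}), join-distributivity from item~(\ref{ijd}) (which exactly matches axiom~(\ref{jdp}) once one recognizes that an arbitrary join in $\id$ is precisely the ideal-theoretic join of Definition~\ref{opi}(\ref{join})), and the identity axiom $\la\pr I=I$, with $\la$ playing the role of $\top$, from item~(\ref{lii}).

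The main obstacle, such as it is, is purely bookkeeping: checking that Definition~\ref{opi}(\ref{join}) really yields the lattice-theoretic supremum, and matching the formulation of Proposition~\ref{bpi}(\ref{ijd}) against the distributivity axiom~(\ref{jdp}) in its new incarnation on $\id$. Once these identifications are made, no further verification is required.
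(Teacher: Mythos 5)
Your proposal is correct and follows essentially the same route as the paper: establish that $(\id,\preccurlyeq)$ is a complete lattice with bottom $0$ and top $\la$, then cite items (\ref{idas}), (\ref{idco}), (\ref{ijd}), and (\ref{lii}) of Proposition~\ref{bpi} for the four axioms of Definition~\ref{mld}. The only difference is that you spell out the verification that Definition~\ref{opi}(\ref{join}) gives the lattice-theoretic supremum, which the paper leaves implicit.
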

	
	\begin{proof}
		By (\ref{meet}) and (\ref{join}) of Definition \ref{opi}, it follows that arbitrary joins and arbitrary meets exist in the poset $(\id, \preccurlyeq),$ where $\preccurlyeq$ is the usual subset inclusion relation. It is easy to see that the bottom element $\bot$ and the top element $\top$ of the poset $(\id, \preccurlyeq)$ are respectively $0$ (the zero ideal) and $\la$. Hence, $(\id, \preccurlyeq)$ is a complete lattice. Taking the multiplication operation between ideals in $\la$ as in Definition \ref{opi}(\ref{prodc}), it  follows respectively from  (\ref{idas}), (\ref{idco}), (\ref{ijd}), and (\ref{lii}) of Proposition \ref{bpi} that the operation $\pr $ satisfies Axioms (\ref{mla}), (\ref{mlc}), (\ref{jdp}), and (\ref{mid}) of Definition \ref{mld}.
	\end{proof}
	
	Similar to rings, the concept of an annihilator of a subset in a quantale is also established as a special case of the residual operation.

	\begin{definition}
		If $S$ is a nonempty subset in a quantale $\la$, then the \emph{annihilator of $S$} is defined by \[\an(S)=\{x\in \la \mid x\pr s=\bot,\;\text{for all}\;s\in S\}.\]	
	\end{definition}
	
	In the following lemma, we discuss a few elementary properties of annihilators.

	\begin{lemma}
		Let $S$ and $T$ be subsets in a quantale $\la$. Then the following holds.
		
		\begin{enumerate}
			\item $S\preccurlyeq  T$ implies that $\an(T)\preccurlyeq \an(S).$
			
			\item $S\preccurlyeq  \an(\an(S)).$
			
			\item $\an(S)=\an(\an(\an(S))).$
		\end{enumerate}
	\end{lemma}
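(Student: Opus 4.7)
The plan is to handle the three parts in order, with (3) following formally from (1) and (2) by the standard Galois-connection style argument.

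For part (1), I would take an arbitrary $x \in \an(T)$, so that $x \pr t = \bot$ for every $t \in T$. Since $S \preccurlyeq T$, each $s \in S$ is also an element of $T$, hence $x \pr s = \bot$ for all $s \in S$, which means $x \in \an(S)$. This gives the inclusion $\an(T) \preccurlyeq \an(S)$.

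For part (2), I would pick $s \in S$ and show $s \in \an(\an(S))$. By the definition of the annihilator, every $x \in \an(S)$ satisfies $x \pr s' = \bot$ for all $s' \in S$; in particular $x \pr s = \bot$. Using commutativity of $\pr$ (Definition \ref{mld}(\ref{mlc})), we get $s \pr x = \bot$ for all $x \in \an(S)$, so $s \in \an(\an(S))$.

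For part (3), no fresh computation is needed. Applying part (2) to the set $\an(S)$ in place of $S$ yields $\an(S) \preccurlyeq \an(\an(\an(S)))$. For the reverse direction, part (2) also gives $S \preccurlyeq \an(\an(S))$, and feeding this containment into part (1) (which reverses inclusions) produces $\an(\an(\an(S))) \preccurlyeq \an(S)$. Combining the two inclusions yields the equality. None of the steps looks like a real obstacle: the argument is essentially the standard pattern for any order-reversing operator, and the only quantale-specific ingredient is the commutativity of $\pr$ used in part (2).
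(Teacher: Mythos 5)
Your proof is correct and follows the same route as the paper's: part (1) by unwinding the definition, part (2) via commutativity of $\pr$, and part (3) by combining (1) and (2) in the standard Galois-connection pattern. No issues.
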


	\begin{proof}
		(1) If $x\in \an(T),$ then $x\pr t=\bot$ for all $t\in T$. Since $S\preccurlyeq T,$ this implies that $x\pr s=\bot$ for all $s\in S$. Hence $\an(T)\preccurlyeq \an(S).$			
		
		(2) If $s\in S$, then $x\pr s=s\pr x=\bot$ for all $x\in \an(S),$ and hence $s\in \an(\an(S)).$ 
		
		(3) Since by (2), $S\preccurlyeq \an(\an(S))$, by (1), it follows that $\an(\an(\an(S)))\preccurlyeq \an(S).$ By (2), it follows that $\an(S)\preccurlyeq \an(\an(\an(S))).$
	\end{proof}

	\begin{definition}
		Let $\la$ be a quantale.
		\begin{enumerate}
			\item An ideal $I$ of $\la$ is called \emph{proper} if $I\neq \la$, and we shall denote the set of all proper ideals in $\la$ by $\idp$.
			
			\item A proper ideal $M$ of $\la$ is called \emph{maximal} if there is no other proper ideal $I$ of $\la$ properly containing $M$. A quantale $\la$ with exactly one maximal ideal is called \emph{local}.
			
			\item An ideal $I$ of $\la$ is called \emph{minimal} if $I\neq 0$ and $I$ properly contains no other nonzero ideals in $\la$.
		\end{enumerate}
	\end{definition}
	
	The subsequent result ensures that the set of maximal ideals in a quantale is not empty.

	\begin{theorem}\label{emi}
		Every quantale $\la$ with $0\neq 1$ has a maximal ideal. 
	\end{theorem}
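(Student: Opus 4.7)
The plan is to invoke Zorn's lemma on the poset $(\idp, \preccurlyeq)$ of proper ideals of $\la$, ordered by inclusion. First I would observe that $\idp$ is nonempty: since $0 \neq 1$ in $\la$, that is $\bot\neq\top$, the zero ideal $0=\{\bot\}$ does not contain $\top$ and is therefore proper, so $0\in\idp$. Note also the clean characterization that an ideal $I$ is proper if and only if $\top\notin I$, since by downward closure (Definition \ref{iml}(\ref{cul})) any ideal containing $\top$ must equal $\la$.

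Next, I would take an arbitrary nonempty chain $\mathcal{C}=\{I_\alpha\}_{\alpha\in A}$ in $\idp$ and verify that $U=\bigcup_{\alpha\in A}I_\alpha$ is an upper bound for $\mathcal{C}$ inside $\idp$. To see $U$ is an ideal, given $x,y\in U$ one picks $I_\alpha\ni x$ and $I_\beta\ni y$; because $\mathcal{C}$ is a chain, one of these is contained in the other, say $I_\alpha\preccurlyeq I_\beta$, and then both elements lie in $I_\beta$, so $x\vee y\in I_\beta\preccurlyeq U$, verifying Definition \ref{iml}(\ref{cuj}). Downward closure (Definition \ref{iml}(\ref{cul})) for $U$ is inherited directly from any single $I_\alpha$ containing the larger element. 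Crucially, $U$ remains proper: since each $I_\alpha$ is proper, $\top\notin I_\alpha$ for every $\alpha$, so $\top\notin U$, whence $U\in\idp$.

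Zorn's lemma now supplies a maximal element $M$ of $\idp$, and by construction $M$ is a maximal ideal of $\la$ in the sense of the preceding definition. The step requiring the most care is not the algebraic closure of $U$ but the verification that properness is preserved in the limit; this is where the hypothesis $0\neq 1$ (equivalently $\bot\neq\top$) is essential, both to make $\idp$ nonempty at the outset and to ensure that $\top$ stays out of every element of the chain and hence out of the union.
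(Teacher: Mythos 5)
Your proposal is correct and follows essentially the same route as the paper: Zorn's lemma applied to the poset of proper ideals, with the union (equivalently, for a chain, the join) serving as the upper bound. You are in fact more careful than the paper's own proof, which asserts without comment that the join of a chain of proper ideals remains proper, whereas you correctly identify the characterization $I\neq\la\Leftrightarrow\top\notin I$ as the point that makes this work.
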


	\begin{proof}
		Since $0\in \idp$, $\mathrm{Prp}(\la)\neq \emptyset.$ Consider a chain $\{I_{\lambda}\}_{\lambda\in \Lambda}$ of ideals in $\idp$. By Definition \ref{opi}(\ref{meet}), it follows that $\bigvee_{ \lambda \in \Lambda} I_{\lambda}\in \idp$ and $I_{\lambda}\preccurlyeq\bigvee_{ \lambda \in \Lambda} I_{\lambda}$ for all $\lambda \in \Lambda$. Hence, by Zorn's lemma, $\mathrm{Prp}(\la)$ has a maximal element, which is our desired maximal ideal in $\la$.
	\end{proof}
	
	\begin{corollary}\label{eicm}
		Every proper ideal in a quantale $\la$ is contained in a maximal ideal in $\la$.
	\end{corollary}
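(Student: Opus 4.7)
The plan is to mimic the Zorn's lemma argument used in Theorem \ref{emi}, but applied to the subposet of proper ideals lying above the given ideal rather than to all of $\idp$. Let $I$ be a proper ideal of $\la$ and set
\[
\Sigma=\{J\in \idp\mid I\preccurlyeq J\},
\]
partially ordered by $\preccurlyeq$. Since $I\in \Sigma$, this collection is nonempty, so the hypothesis of Zorn's lemma is nontrivial.

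Next, I would check that every chain $\{J_{\lambda}\}_{\lambda\in\Lambda}$ in $\Sigma$ has an upper bound in $\Sigma$. The natural candidate is the join $\bigvee_{\lambda\in\Lambda}J_{\lambda}$ from Definition \ref{opi}(\ref{join}). It clearly contains each $J_{\lambda}$, hence also $I$. To see it is proper, I would argue exactly as in Theorem \ref{emi}: if $\top$ belonged to this join, then $\top\preccurlyeq\bigvee_{i=1}^{n}x_{\lambda_i}$ for finitely many $x_{\lambda_i}\in J_{\lambda_i}$, and the chain property would place this finite join inside the largest $J_{\lambda_i}$, contradicting the fact that each $J_{\lambda_i}$ is proper. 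Thus $\bigvee_{\lambda\in\Lambda}J_{\lambda}\in\Sigma$, and Zorn's lemma yields a maximal element $M$ of $\Sigma$.

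Finally, I would show that $M$ is a maximal ideal of $\la$ in the unrestricted sense. Suppose, for contradiction, $M\preccurlyeq N$ properly for some proper ideal $N$. Then $I\preccurlyeq M\preccurlyeq N$, so $N\in\Sigma$, contradicting the maximality of $M$ within $\Sigma$. Hence $M$ is a maximal ideal of $\la$ containing $I$, as required.

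There is no real obstacle here, since the mechanism is essentially the one already deployed in Theorem \ref{emi}; the only subtlety is verifying that maximality \emph{within $\Sigma$} upgrades to maximality in $\idp$, which follows immediately from the fact that $I\preccurlyeq M$ is preserved under passing to any larger proper ideal.
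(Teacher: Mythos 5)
Your proof is correct and is essentially the argument the paper intends: the corollary is stated as an immediate consequence of Theorem \ref{emi}, and your relativized Zorn's lemma argument (restricting to proper ideals above $I$, checking that the join of a chain of proper ideals remains proper, and noting that maximality in $\Sigma$ implies maximality in $\idp$) is the standard way to flesh it out. No issues.
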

	
	The following proposition presents a condition that is sufficient for a quantale to be local.

	\begin{proposition}
		Let $\la$ be a quantale and $M\neq \la$ an ideal in $\la$ such that every $x\in \la\neg M$ is a unit in $\la$. Then $\la$ is  local  and $M$ is its maximal ideal.
	\end{proposition}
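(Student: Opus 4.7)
The approach is to establish the stronger statement that every proper ideal of $\la$ is contained in $M$; both the maximality of $M$ and its uniqueness then fall out automatically, with Theorem \ref{emi} supplying existence.

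The one ingredient I would record first is that, in an integral commutative quantale, the only unit is $\top$. Indeed, if $x \in \la$ admits $y$ with $x \pr y = \top$, then Lemma \ref{bip}(\ref{mul}) gives $\top = x \pr y \preccurlyeq x \wedge y \preccurlyeq x$, forcing $x = \top$. This is the essential content that makes the hypothesis on $\la \neg M$ usable.

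With this in hand, suppose $I$ is a proper ideal of $\la$ that is \emph{not} contained in $M$, and pick $x \in I$ with $x \notin M$. Then $x \in \la \neg M$, so by hypothesis $x$ is a unit, and by the previous observation $x = \top \in I$. The down-set axiom Definition \ref{iml}(\ref{cul}) then forces $l \in I$ for every $l \in \la$, so $I = \la$, contradicting properness. Hence every proper ideal of $\la$ lies inside $M$.

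To close, Theorem \ref{emi} provides at least one maximal ideal $N$ of $\la$; applying the previous step to the proper ideal $N$ yields $N \subseteq M$, and the maximality of $N$ then forces $N = M$. So $M$ itself is the unique maximal ideal of $\la$, which is precisely the assertion that $\la$ is local with maximal ideal $M$. The only genuine subtlety in the whole argument is interpretive, namely fixing what ``unit'' means: once one reads it in the standard way as existence of a $\pr$-inverse with respect to the identity $\top$, the integrality of $\la$ collapses this notion to $x = \top$ and the remainder is a direct transcription of the familiar ring-theoretic proof.
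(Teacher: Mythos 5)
Your proof is correct and follows essentially the same route as the paper: show that any proper ideal meeting $\la\neg M$ would contain $\top$ and hence be all of $\la$, then deduce that every proper (in particular every maximal) ideal sits inside $M$. Your preliminary observation that integrality forces every unit to equal $\top$ is a clean way to package the step the paper performs implicitly via $\top = x\pr y \preccurlyeq x \in I$, and your appeal to Theorem \ref{emi} is harmless though not strictly needed, since ``every proper ideal is contained in $M$'' already yields maximality of $M$ directly.
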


	\begin{proof}
		Let $I$ be an ideal in $\la$ such that $M\subsetneq I\preccurlyeq  \la.$ By hypothesis, every element $x\in I\neg M$ is a unit and since $M\neq \la$, we must have $\top=x\pr y\in I$ for some $y\in \la$. Hence $I=\la$, showing that $M$ is maximal. Since every element of $\la\neg M$ is a unit, if $M'$ be another maximal ideal in $\la$, then we must have \[(\la\neg M)\wedge M'=\emptyset,\] which implies that $M'\preccurlyeq M$. But $M'$ is a maximal ideal. Therefore, $M'=M.$
	\end{proof}

	Our next objective is to examine the properties of ideals and their products, meets, and residual operations under quantale homomorphisms. These properties serve as the lattice-theoretic counterparts to their respective ring-theoretic versions (see \cite[Proposition 1.17 and Exercise 1.18]{AM69}).
	To this end, recall that a map $\phi\colon \la \to \la'$ from $\la$ to $\la'$ is called a \emph{quantale homomorphism} if 
	\begin{enumerate}
		\item  $x\preccurlyeq x'$ implies that $\phi(x)\preccurlyeq \phi(x');$	
		
		\item  $\phi(x\vee x')=\phi(l)\vee \phi(l');$
		
		\item  $\phi(x\wedge x')=\phi(x)\wedge \phi(x');$
		
		\item  $\phi(x\pr  x')=\phi(x)\pr  \phi(x'),$
	\end{enumerate}
	for all $x,$ $x'\in \la$. 
	Suppose that  $\phi\colon \la \to \la'$ is a quantale homomorphism.
	If $J$ is an ideal in $\la'$, then the \emph{contraction of} $J$, denoted by $J^c$, is defined by $\phi\inv (J).$
	If $I$ is an ideal in  $\la$, then the \emph{extension of} $I$, denoted by $I^e$, is defined by $\langle \phi(I)\rangle.$

	\begin{theorem}\label{cep}
		
		Let $\phi\colon \la \to \la'$ be a quantale homomorphism. For $I,$ $I_1$, $I_2\in \id$, and  $J,$ $J_1$, $J_2\in\mathcal{I}(\la')$, the following hold.
		
		\begin{enumerate}
			
			\item\label{jcki}  $J^c$ is an ideal in  $\la$. 
			
			\item $I^e$ is an ideal in $\la'$.
			
			\item\label{ijec} $\mathrm{(a)}\; I\preccurlyeq  I^{ec}$. $\mathrm{(b)}\;  J^{ce}\preccurlyeq J$. $\mathrm{(c)}\; J^c=J^{cec}$. $\mathrm{(d)}\; I^e=I^{ece}.$
			
			\item\label{bij} There is a bijection between the sets $\{I\mid I^{ec}=I\}$ and $\{J\mid J^{ce}=J\}$.
			
			\item\label{dri} $\mathrm{(a)}\; (I_1\wedge  I_2)^e\preccurlyeq  I_1^e\wedge  I_2^e$. $\mathrm{(b)}\; (I_1\pr I_2)^e=I_1^e\pr I_2^e$. $\mathrm{(c)}\; (I_1:I_2)^e\preccurlyeq  (I_1^e:I_2^e)$. 
			
			\item\label{crj} $\mathrm{(a)}\; (J_1\wedge  J_2)^c= J_1^c\wedge  J_2^c$. $\mathrm{(b)}\; J_1^c\pr J_2^c \preccurlyeq (J_1\pr J_2)^c$. $\mathrm{(c)}\; (J_1:J_2)^c\preccurlyeq  (J_1^c:J_2^c)$. 
		\end{enumerate}	
	\end{theorem}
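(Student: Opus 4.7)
\medskip

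\noindent\textbf{Proof plan.} The proof will proceed through the six items in order, leveraging the compatibility axioms of $\phi$ (preservation of order, joins, meets, and $\pr$) and the definition of the ideal generated by a subset (Definition \ref{igs}).

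For (\ref{jcki}) and (2), I will verify the two axioms of Definition \ref{iml} directly. Closure of $J^c=\phi\inv(J)$ under joins follows from $\phi(x\vee y)=\phi(x)\vee \phi(y)\in J$, and its downward closure from $l\preccurlyeq x$ implying $\phi(l)\preccurlyeq \phi(x)\in J$. That $I^e=\langle \phi(I)\rangle$ is an ideal is immediate from Definition \ref{igs}.

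For (\ref{ijec}), statement (a) follows since $\phi(x)\in \phi(I)\preccurlyeq I^e$ for $x\in I$, and (b) follows from the set-theoretic inclusion $\phi(\phi\inv(J))\preccurlyeq J$ together with the fact that $J$ is itself an ideal, so $\langle \phi(J^c)\rangle\preccurlyeq J$. Items (c) and (d) are then the standard Galois-connection identities: applying $(-)^e$ to $J^c\preccurlyeq J^{cec}$ (from (a) with $I=J^c$) and combining with (b) applied to $J^{ce}\preccurlyeq J$, with one intermediate use of monotonicity of $(-)^e$, yields $J^c=J^{cec}$; the dual argument yields $I^e=I^{ece}$. Part (\ref{bij}) is then a formal consequence: the maps $I\mapsto I^e$ and $J\mapsto J^c$ restrict to mutually inverse bijections between the two fixed-point sets, as $(I^e)^c=I^{ec}=I$ and $(J^c)^e=J^{ce}=J$.

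For (\ref{dri})(a) I will use monotonicity of $(-)^e$ under $I_1\wedge I_2\preccurlyeq I_j$. For (\ref{dri})(b), the inclusion $I_1^e\pr I_2^e\preccurlyeq (I_1\pr I_2)^e$ will follow from the fact that $\phi(x_1\pr x_2)=\phi(x_1)\pr \phi(x_2)\in (I_1\pr I_2)^e$ for $x_j\in I_j$, combined with the distributivity of $\pr$ over $\vee$ (Definition \ref{mld}(\ref{jdp})) to propagate this to generated ideals; the reverse inclusion comes from $\phi(I_1)\pr \phi(I_2)\preccurlyeq \langle \phi(I_1)\rangle \pr \langle \phi(I_2)\rangle$ and the universal property of $\langle -\rangle$. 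For (\ref{crj})(a), $(J_1\wedge J_2)^c=\phi\inv(J_1\wedge J_2)=\phi\inv(J_1)\wedge \phi\inv(J_2)$ is set-theoretic. For (\ref{crj})(b) and (c), I will again reduce to the generators: $x\in J_j^c$ gives $\phi(x)\in J_j$, so $\phi$ sends products to $J_1\pr J_2$ and elements of the residual to the image residual, and closing under the appropriate ideal operations completes the proof.

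\textbf{Main obstacle.} The least automatic step is (\ref{dri})(c): showing $(I_1:I_2)^e\preccurlyeq (I_1^e:I_2^e)$. For $x\in (I_1:I_2)$, $\phi(x)\pr \phi(s)\in I_1^e$ holds for every $s\in I_2$, but to conclude $\phi(x)\in (I_1^e:I_2^e)$ I must test $\phi(x)\pr j$ for arbitrary $j\in I_2^e$, i.e.\ for $j\preccurlyeq \bigvee_i q_i\pr \phi(s_i)$ with $q_i\in \la'$ and $s_i\in I_2$. Then $\phi(x)\pr j\preccurlyeq \bigvee_i q_i\pr(\phi(x)\pr \phi(s_i))$, and since each $\phi(x)\pr \phi(s_i)\in I_1^e$ and $\la'\pr I_1^e=I_1^e$ by Proposition \ref{bpi}(\ref{lii}), the result lies in $I_1^e$. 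Finally, since $(I_1^e:I_2^e)$ is an ideal containing $\phi((I_1:I_2))$, it contains $\langle \phi((I_1:I_2))\rangle=(I_1:I_2)^e$, giving the claim.
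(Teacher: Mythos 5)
Your proposal is correct, and on items (1)--(4), 5(a)--(b), and (6) it follows essentially the same route as the paper: direct verification of the two ideal axioms for $J^c$, the formal Galois-connection identities for (3)--(4) with the bijection in (4) as a corollary, and generator computations for extensions of products and for contractions. The one place where you genuinely diverge is 5(c), which you single out as the main obstacle and settle by an element-wise computation: testing $\phi(x)\pr j$ against an arbitrary $j\preccurlyeq\bigvee_i q_i\pr\phi(s_i)$ in $I_2^e$ and absorbing the coefficients via $\la'\pr I_1^e=I_1^e$. That argument is sound, but the paper obtains the same conclusion in one line by exploiting what has already been proved: from Proposition \ref{bpi}(\ref{ijji}) one has $(I_1:I_2)\pr I_2\preccurlyeq I_1$, and applying the extension operator, which is monotone and multiplicative by 5(b), gives $(I_1:I_2)^e\pr I_2^e=((I_1:I_2)\pr I_2)^e\preccurlyeq I_1^e$, which is precisely the defining condition for $(I_1:I_2)^e\preccurlyeq (I_1^e:I_2^e)$. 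Your version buys self-containedness and makes explicit where the identity $\la'\pr I_1^e=I_1^e$ enters; the paper's version buys brevity and exhibits 5(c) as a formal consequence of 5(b) together with the residual calculus. A minor further difference: for 5(a) the paper routes through Lemma \ref{idss} to obtain $\langle\phi(I_1)\wedge\phi(I_2)\rangle=\langle\phi(I_1)\rangle\wedge\langle\phi(I_2)\rangle$, whereas you use only monotonicity of $(-)^e$, which suffices since the claim is an inclusion rather than an equality.
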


	\begin{proof}
		(1)	Let $x,$ $x'\in J^c$. Then $\phi(x),$ $\phi(x')\in J$. Since $J$ is an ideal, we have $\phi(x\vee x')=\phi(x)\vee \phi(x')\in J,$ implying that $x\vee x'\in J^c$. If $x'\preccurlyeq x$ and $x\in J^c$, then $\phi(x')\preccurlyeq \phi(x)\in J$, proving that $x'\in J^c$, as required.
		
		(2) Follows from Definition \ref{igs}.
		
		(3)--(4) Follows by usual set-theoretic arguments.
		
		5(a) Observe that \[(I_1\wedge I_2)^e=\langle \phi(I_1\wedge I_2)\rangle \preccurlyeq \langle \phi(I_1)\wedge \phi(I_2)\rangle=\langle \phi(I_1)\rangle \wedge \langle \phi(I_2)\rangle=I_1^e\wedge I_2^e,\] where, the second equality follows from Lemma \ref{idss}.
		
		5(b) First, we show that $\langle \phi(I_1\pr I_2)\rangle=\langle \phi(I_1)\rangle \pr \langle \phi(I_2)\rangle.$ Since $I_1\pr I_2\preccurlyeq I_1$ and $I_1\pr I_2\preccurlyeq I_2,$ we immediately obtain that $\langle \phi(I_1\pr I_2)\rangle\preccurlyeq \langle \phi(I_1)\rangle \pr \langle \phi(I_2)\rangle.$ For the other inclusion, let $x\in \langle \phi(I_1)\rangle \pr \langle \phi(I_2)\rangle.$ This implies $x=x_1\pr x_2$ for some $x_1,$ $x_2\in \la$ such that $x_1\preccurlyeq \bigvee_{i=1}^n a_i$ and $x_2\preccurlyeq \bigvee_{j=1}^m b_j,$ where $a_i\in \la \pr \phi(I_1)$ and $b_j\in \la \pr \phi(I_2)$ for all $1\leqslant i \leqslant n$ and $1\leqslant j \leqslant m$. So, \[x=x_1\pr x_2\preccurlyeq \left( \bigvee_{i=1}^n a_i\right)\pr \left( \bigvee_{j=1}^m b_j\right)=\bigvee_{i=1}^n\bigvee_{j=1}^m a_i\pr b_j, \]
		where \[a_i\pr b_j\in \la\pr \phi(I_1)\pr \la\pr \phi(I_2)=\la\la\pr \phi(I_1)\pr\phi(I_2)=\la\pr \phi(I_1)\pr\phi(I_2).\] Therefore, $x\in \langle \phi(I_1\pr I_2)\rangle.$ Now, $(I_1\pr I_2)^e=\langle \phi(I_1\pr I_2)\rangle =\langle \phi(I_1)\rangle \pr \langle \phi(I_2)\rangle=I_1^e\pr I_2^e.$
		
		5(c) By Propositions \ref{bpi}(\ref{ijji}) and 5(b), we obtain \[(I_1:I_2)^e\pr I_2^e=((I_1:I_2)\pr I_2)^e\preccurlyeq I_1^e. \]	
		
		6. The proofs are similar to (5).
	\end{proof}

	\subsection{Prime and semiprime ideals}\label{psi}
	
	In this section, our objective is to present the concept of prime and semiprime ideals in the context of a quantale. We shall demonstrate that their definitions based on  elements are equivalent to the definitions rooted in ideals. Additionally, we shall explore the topic of radical ideals and delve into some of their basic properties. Some of these properties are direct generalizations of their ring-theoretic counterparts (see \cite{CZ20}). 
	
	\begin{definition}
		Let $\la$ be a quantale.
		\begin{enumerate}
			\item A proper ideal $P$ of  $\la$ is called \emph{prime} if $x\pr y\in P$ implies $x\in P$ or $y\in P$ for all $x,$ $y\in \la$. By $\spc$, we denote the set of all prime ideals in $\la$. An ideal $P$ of $\la$ is called \emph{minimal prime} if $P$ is both a minimal ideal and a prime ideal.
			
			\item A proper ideal $P$ of $\la$ is called \emph{semiprime} if $x^2 \in P$  implies that $x\in P$ for all $x\in \la$.
			
			\item Two ideals $I$ and $J$ of $\la$ is said to be \emph{coprime}  if $I\vee J = \la$.	
			
			\item $\la$ is called \emph{Noetherian} if every ascending chain of ideals in $\la$ is eventually stationary.
		\end{enumerate}
	\end{definition}

\begin{proposition}\label{lpsp}
		Let $\la$ be a quantale.
		\begin{enumerate}
			
			\item\label{kpp} An ideal $P\in \spc$ if and only if $I\pr J\preccurlyeq   P$ implies $I\preccurlyeq  P$ or $J\preccurlyeq  P,$ for all $I, J\in \id$.
			
			\item\label{mpiq} Any prime ideal $P$ of $\la$ contains
			a prime ideal $P'$  such that $S\preccurlyeq P'$.
			
			\item \label{wnknp} 
			If $I$ is a proper ideal in a Noetherian quantale $\la$, then $\la$ has only a finite number of
			minimal prime ideals over $I$.
			
			\item An ideal $I$ of $\la$ is semiprime if and only if $J\pr J\preccurlyeq  I$ implies $J\preccurlyeq  I,$ for all $J\in \id.$
			
			\item Let $\la$ be a quantale. If $I$ and $J$ are coprime ideals in $\la$, then $I\wedge J=IJ.$ 
		\end{enumerate}
	\end{proposition}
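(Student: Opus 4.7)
The plan is to translate between element-wise and ideal-wise conditions by means of principal ideals: with this translation, (1) and (4) are immediate, (2) and (3) become order-theoretic arguments, and (5) follows directly from distributivity. I first record that for any $x\in\la$, Definition~\ref{igs} combined with Lemma~\ref{bip}(\ref{mul}) gives $\langle x\rangle=\{l\in\la\mid l\preccurlyeq x\}$ and $\langle x\rangle\pr\langle y\rangle=\langle x\pr y\rangle$; one containment is immediate since $a\pr b\preccurlyeq x\pr y$ whenever $a\preccurlyeq x$ and $b\preccurlyeq y$, while the reverse holds because $x\pr y$ itself lies in $\langle x\rangle\pr\langle y\rangle$. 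In particular, $x\in P$ is equivalent to $\langle x\rangle\preccurlyeq P$.

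The forward directions of~(1) and~(4) are now routine: if $I\pr J\preccurlyeq P$ with $I\not\preccurlyeq P$ and $J\not\preccurlyeq P$, pick $x\in I\setminus P$ and $y\in J\setminus P$ (with $x=y$ for~(4)) to contradict the element-wise (semi)primality. The converses instantiate the ideal hypothesis with $I=\langle x\rangle$ and $J=\langle y\rangle$, using $\langle x\rangle\pr\langle y\rangle=\langle x\pr y\rangle\preccurlyeq P$ whenever $x\pr y\in P$, and then reading off $x\in P$ or $y\in P$ from the equivalence above.

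For~(2), reading $P'$ as a prime contained in $P$ that is minimal above the given set $S$ (assumed to lie inside $P$), I apply Zorn's lemma to the poset of primes sandwiched between $S$ and $P$, ordered by reverse inclusion. The required input is that the meet $P_\infty=\bigwedge_\lambda P_\lambda$ of a descending chain of primes is again prime: if $x\pr y\in P_\infty$ and $x\notin P_\infty$, some $P_{\lambda_0}$ misses $x$, so $y\in P_{\lambda_0}$ by primality; then for any $P_\lambda$ with $P_{\lambda_0}\preccurlyeq P_\lambda$ one has $y\in P_{\lambda_0}\preccurlyeq P_\lambda$, and for any $P_\lambda$ with $P_\lambda\preccurlyeq P_{\lambda_0}$ one has $x\notin P_\lambda$ (else $x\in P_{\lambda_0}$), hence $y\in P_\lambda$ by primality. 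For~(3), I argue by Noetherian induction: let $\Sigma$ be the set of proper ideals admitting infinitely many minimal primes and suppose for contradiction that $\Sigma\ne\emptyset$, with a maximal element $I_0\in\Sigma$ obtained from the ascending chain condition. Then $I_0$ is not prime, so by~(1) there are ideals $I_1,I_2\not\preccurlyeq I_0$ with $I_1\pr I_2\preccurlyeq I_0$; each of $I_0\vee I_1$ and $I_0\vee I_2$ strictly contains $I_0$ and so carries only finitely many minimal primes. Any prime $P$ minimal over $I_0$ contains $I_1\pr I_2$, hence by~(1) contains $I_1$ or $I_2$, and so contains some $I_0\vee I_j$; moreover a prime $Q$ with $I_0\vee I_j\preccurlyeq Q\preccurlyeq P$ would satisfy $I_0\preccurlyeq Q\preccurlyeq P$, forcing $Q=P$ by minimality of $P$ over $I_0$, and showing that $P$ is minimal over $I_0\vee I_j$. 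This embeds the minimal primes over $I_0$ into a union of two finite sets, a contradiction. The main obstacle here is tracking minimality across the two enlargements $I_0\vee I_j$, which is exactly the step just indicated.

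Finally, for~(5), Proposition~\ref{bpi}(\ref{mip}) gives $I\pr J\preccurlyeq I\wedge J$, and for the reverse, coprimality yields
\[
I\wedge J=(I\wedge J)\pr\top=(I\wedge J)\pr(I\vee J)=\bigl((I\wedge J)\pr I\bigr)\vee\bigl((I\wedge J)\pr J\bigr)\preccurlyeq(J\pr I)\vee(I\pr J)=I\pr J,
\]
where the first two equalities use Definition~\ref{mld}(\ref{mid}) together with $I\vee J=\la$, the third uses Definition~\ref{mld}(\ref{jdp}), and the final inequality applies Lemma~\ref{bip}(\ref{mon}) to the relations $I\wedge J\preccurlyeq J$ and $I\wedge J\preccurlyeq I$.
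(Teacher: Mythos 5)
Your proposal is correct, and for parts (1), (4) and (5) it follows essentially the same route as the paper: principal ideals mediate between the element-wise and ideal-wise conditions (the paper proves $\langle x\rangle\pr\langle y\rangle\preccurlyeq P$ and $\langle x\rangle\pr\langle x\rangle\preccurlyeq\langle x^2\rangle$ by hand where you use the cleaner identity $\langle x\rangle\pr\langle y\rangle=\langle x\pr y\rangle$), and (5) is the same distributivity computation $I\wedge J=(I\vee J)\pr(I\wedge J)\preccurlyeq I\pr J$. The differences are in (2) and (3). For (2) the paper simply invokes Zorn's lemma on the set $\{P'\in\spc\mid S\preccurlyeq P'\preccurlyeq P\}$ without checking that the meet of a descending chain of primes is again prime; your case analysis on whether a chain member lies above or below the $P_{\lambda_0}$ missing $x$ supplies exactly the verification the paper omits, so your version is the more complete one. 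For (3) the paper argues in two steps: it first shows (via a maximal counterexample $M$ and the splitting $A=\langle M,a\rangle$, $B=\langle M,b\rangle$ with $A\pr B\preccurlyeq M$) that every ideal containing $I$ contains a finite product of primes each containing $I$, and then observes that any minimal prime over $I$ must equal one of the finitely many factors. You instead run a direct Noetherian induction on the set of ideals with infinitely many minimal primes, splitting a maximal offender $I_0$ into $I_0\vee I_1$ and $I_0\vee I_2$ and checking that minimality over $I_0$ transfers to minimality over the enlargement. Both are sound; the paper's route has the side benefit of establishing the auxiliary ``finite product of primes'' fact, while yours is more self-contained and tracks the minimal primes directly. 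One cosmetic point: in (5) you justify $(I\wedge J)\pr\top=I\wedge J$ and the monotonicity step by citing Definition \ref{mld}(\ref{mid}) and Lemma \ref{bip}(\ref{mon}), which are stated for elements; at the level of ideals the clean references are Proposition \ref{bpi}(\ref{lii}) and the monotonicity inherited from the fact that $\id$ is itself a quantale.
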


	\begin{proof}
		(1) Suppose $P$ is a  prime ideal, and $I\pr J\preccurlyeq  P$ with $J\neq P$. This implies the existence of an element $j\in J$ such that $j\notin P$. For any $i\in I$, we then have, $i\pr j\in P$ and based on the assumption, this implies $i\in P$. Since $i$ was chosen arbitrarily from $I$, we conclude that $I\preccurlyeq  P$, as required. 
		Conversely, suppose $I\pr J\preccurlyeq  P$ implies either $I\preccurlyeq  P$ or $J\preccurlyeq  P$. Now consider $x\pr y\in P$ for some $x,$ $y\in \la$. Let $a\pr b\in \langle x\rangle \pr  \langle y \rangle,$ where $a\in \langle x\rangle$ and $b\in \langle y\rangle$. This implies that \[a\pr b\preccurlyeq \left(\bigvee_{i=1}^n x_i\right) \pr \left(\bigvee_{j=1}^m y_j\right)=\bigvee_{i=1}^n\left(\bigvee_{j=1}^m (x_i\pr y_j)\right),\] where $x_i\pr y_j\in (x\pr y)\la,$ for all $i=1, \ldots, n$ and $j=1, \ldots, m.$ Thus, $x_i\pr y_j\in P$, and hence $a\pr b\in P$, implying that $\langle x\rangle \pr \langle y \rangle\preccurlyeq  P.$ By the assumption, this implies either $\langle x\rangle\preccurlyeq  P$ or $\langle y\rangle\preccurlyeq  P;$ in other words, either $x\in P$ or $y\in P$.
		
		(2) Suppose $\Omega=\{P'\in \spc \mid S\preccurlyeq  P' \preccurlyeq P\}.$ Since $P\in \Omega$, the set $\Omega$ is nonempty. Consider a subset $\{P'_{\lambda}\}_{\lambda \in \Lambda}$  of  decreasing chain of prime ideals 
		of $\Omega$. Then by Zorn's lemma, there exists a minimal element $\bigwedge_{\lambda \in \Lambda}P'_{\lambda}$ of that chain, and this minimal element is our desired prime ideal.
		
		(3)
		First we show that every ideal containing $I$ contains a finite product of prime ideals, each
		containing $I$. Suppose that this is not the case, and let $S$ be the set of ideals containing $I$ which
		do not contain a finite product of prime ideals each containing $I$. By hypothesis, $S$ is not empty.
		Let $C$ be a chain in $S$. As $\la$ is Noetherian, $C$ has a maximum element. From Zorn's lemma, $S$ has a
		maximal element $M$. As $\la$ has a prime ideal containing $I$, $M \neq\la$. Also, $M$ is not prime.
		There exist $a$, $b \in \la$ such that $a\pr b \in M$ and $a \notin M$, $b \notin M$. Setting \[A = \langle M, a\rangle, \quad
		B = \langle M, b\rangle,\] we obtain $A\pr B \preccurlyeq M$, with $A$ and $B$ strictly included in $M$. Since $M$ is maximal, $A$
		and $B$ both contain a finite product of prime ideals each containing $I$, hence so does $M$, which
		contradicts the fact that $M\in S$. It follows that every ideal containing I contains a finite product
		of prime ideals, each containing $I$.
		We now apply this result to the ideal $I$: there exist prime ideals $P_1,\ldots, P_n$, each containing
		$I$, whose product is contained in $I$. We claim that any minimal prime $P$ over $I$ is among the
		$P_i$. Indeed, \[P_1\pr\cdots\pr P_n\preccurlyeq I \preccurlyeq P.\] We deduce that $P_i\preccurlyeq P$, for some $i$. However, $P$ is minimal, so
		$P_i = P$, and it follows that there is only a finite number of minimal prime ideals over $I$.
		
		(4) Suppose $I$ is a semiprime ideal in $\la$. Let $j\in J$. Then $j^2\in J\pr J\preccurlyeq  I$. This implies that $j^2\in I$ and since $I$ is semirpime, $j\in I$, \textit{i.e.}, $J\preccurlyeq I$. For the converse, we first show that $\langle x \rangle\pr  \langle x \rangle\preccurlyeq \langle x^2\rangle$, for any $x\in \la$. Suppose $l\in \langle x \rangle\pr  \langle x \rangle.$ In particular, this implies that $l\preccurlyeq x^2,$ and hence $l\in \langle x^2\rangle.$ Now, let $x^2\in I$. Then \[\langle x \rangle\pr  \langle x \rangle\preccurlyeq\langle x^2\rangle \preccurlyeq I.\] By assumption, this means $x\in \langle x\rangle \preccurlyeq I.$ Hence, $I$ is a semiprime ideal.
		
		(5) Notice that \[(I\vee J)\pr (I\wedge J)=I\pr (I\wedge J)\vee J(I\wedge J)\preccurlyeq (I\wedge (I\pr J))\vee ((I\pr J)\wedge J)\preccurlyeq I\pr J,\] where the equality is obtained by Definition \ref{mld}(\ref{jdp}),  and the first inclusion follows from Proposition \ref{bpi}(\ref{ijks}). If $I\vee J=\la,$ then by Proposition \ref{bpi}(\ref{lii}), $\la(I\wedge J)=I\wedge J\preccurlyeq  I\pr J$.	
	\end{proof} 
	
	The following proposition is adapted from the realm of rings (see \cite[Lemma 3.19]{AK13}).

	\begin{proposition}[Prime avoidance lemma]
		Let $I$ be a subset in a quantale $\la$ that is stable under join and the operation $\pr$. Let  $P_1, \ldots, P_n$ be ideals in $\la$ such that $P_3, \ldots, P_n\in \spc$. If $I\neq P_j$ for all $j,$ then there is an $x \in I$ such that $x \notin
		P_j$ for all $j$.
	\end{proposition}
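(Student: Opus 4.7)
The plan is to argue by induction on $n$, modelled on the classical ring-theoretic proof but with sums replaced by joins, products by $\pr$, and divisibility by the order $\preccurlyeq$. Throughout I read the hypothesis ``$I \neq P_j$'' as $I \not\preccurlyeq P_j$, i.e.\ that some element of $I$ lies outside $P_j$.

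The base case $n=1$ is immediate. For $n=2$, choose $a \in I$ with $a \notin P_1$ and $b \in I$ with $b \notin P_2$; if $a \notin P_2$ or $b \notin P_1$ we are done, so assume $a \in P_2$ and $b \in P_1$, whereupon $a \vee b \in I$ avoids both: were $a \vee b \in P_1$, the downward closure in Definition \ref{iml}(\ref{cul}) applied to $a \preccurlyeq a \vee b$ would give $a \in P_1$, contradicting the choice of $a$; the case $P_2$ is symmetric.

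For the inductive step $n \geq 3$, I apply the inductive hypothesis to each $(n-1)$-element subfamily obtained by deleting a single $P_i$ (relabeling so that the at most two non-prime members remain listed first); this produces, for every $i$, an element $x_i \in I$ with $x_i \notin P_j$ whenever $j \neq i$. If some $x_i$ also misses $P_i$ the conclusion follows, so I may assume $x_i \in P_i$ for every $i$ and form
\[y \;=\; x_n \;\vee\; (x_1 \pr x_2 \pr \cdots \pr x_{n-1}),\]
which lies in $I$ by stability under $\vee$ and $\pr$. That $y \notin P_n$ follows because $P_n$ is prime and no $x_i$ with $i < n$ lies in $P_n$, so iterated primality yields $x_1 \pr \cdots \pr x_{n-1} \notin P_n$; if $y$ were in $P_n$, downward closure applied to $x_1 \pr \cdots \pr x_{n-1} \preccurlyeq y$ would contradict this. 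For each $j < n$, Lemma \ref{bip}(\ref{mul}) yields $x_1 \pr \cdots \pr x_{n-1} \preccurlyeq x_j \in P_j$, so the product lies in $P_j$; if $y$ were also in $P_j$, then $x_n \preccurlyeq y$ together with downward closure would force $x_n \in P_j$, contradicting the choice of $x_n$.

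The main obstacle is cosmetic rather than conceptual: in a ring, the classical element $x_n + x_1 \cdots x_{n-1}$ handles the avoidance claim via the ``in-plus-not-in'' trick within each ideal, whereas here every such step must be translated, on one hand via Lemma \ref{bip}(\ref{mul}) (to situate the product underneath each $x_j$), and on the other via the downward-closure axiom of ideals (to transport membership from $y$ down to $x_n$ or to the product). Once this dictionary is fixed, the only genuinely primality-consuming verification is the one at $P_n$, exactly as in the ring proof.
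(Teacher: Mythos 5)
Your proposal is correct and follows essentially the same route as the paper's proof: induction on $n$, the element $x_1\vee x_2$ for $n=2$, and the element $(x_1\pr\cdots\pr x_{n-1})\vee x_n$ for $n\geqslant 3$, with downward closure of ideals and primality of $P_n$ doing the work. Your write-up is in fact slightly more careful than the paper's (explicitly invoking downward closure at each step and noting the relabeling needed before applying the inductive hypothesis), but the argument is the same.
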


	\begin{proof}
		We prove the claim by induction. If $n=1$, the claim is trivially true. Suppose $n\geqslant 2$ and suppose for every $i$, there exists an $x_i\in I$ such that $x_i\notin P_j$ for all $j\neq i$. Assume that $x_i\in P_i$ for all $i$. If $n=2$, then $x_1\vee x_2\notin P_j$ for $j=1,$ $2$. Indeed, $x_2\preccurlyeq x_1\vee x_2\in P_1$ implies $x_2\in P_1,$ a contradiction, and similarly, $x_1\preccurlyeq x_1\vee x_2\in P_2$ implies $x_1\in P_2$, a contradiction.	For $n\geqslant 3$, \[(x_1\pr \cdots\pr x_{n-1})\vee x_n\notin P_j.\] For the case, $n=j,$ we have $x_n\in P_n$ and since $P_n$ is a prime ideal, $x_k\in P$ for some $k\in \{1, \ldots, n-1\},$ a contradiction. For, $j<n$, $x_n\in P_j$, again leads to a contradiction.
	\end{proof}
	
	Our following aim is to explore the radicals of ideals and their associations with prime and semiprime ideals.

	\begin{definition}
		The \emph{radical} of an ideal $I$ in a quantale $\la$ is defined as follows:
		\[\rd(I)=\{x\in \la\mid x^n\in I,\;\text{for some}\;n\in \mathds{N}\}.\]	
		An ideal $I$ is said to be \emph{radical ideal} if $\rd(I)=I.$ 
	\end{definition}

	The following proposition gives an equivalent definition of radical of an ideal, and it extends Proposition 1.14 of \cite{AM69}.

	\begin{proposition}\label{edr}
		If $I$ is an ideal in a quantale $\la$, then $\rd(I)=\bigwedge_{ P}\{P\in \spc\mid  I \preccurlyeq P\}.$
	\end{proposition}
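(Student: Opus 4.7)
The plan is to prove the two inclusions separately, with the forward direction being a short induction using primality and the reverse direction requiring a Zorn's lemma construction of a prime ideal avoiding a power of a given element.

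For the inclusion $\rd(I)\preccurlyeq \bigwedge\{P\in\spc\mid I\preccurlyeq P\}$, I would take $x\in\rd(I)$ and any prime ideal $P$ containing $I$. By definition $x^n\in I\preccurlyeq P$ for some $n\in\mathds{N}$, and a straightforward induction on $n$ using the elementwise definition of prime (or equivalently Proposition~\ref{lpsp}(\ref{kpp})) forces $x\in P$. Since this holds for every such $P$, we get $x\in\bigwedge P$.

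For the reverse inclusion, I would argue by contrapositive: suppose $x\notin \rd(I)$, so $x^n\notin I$ for every $n\in\mathds{N}$, and construct a prime $P\in\spc$ with $I\preccurlyeq P$ and $x\notin P$. Let $S=\{x^n\mid n\in\mathds{N}\}$ and consider
\[
\Sigma=\{J\in\id\mid I\preccurlyeq J,\ x^n\notin J \text{ for all } n\in\mathds{N}\}.
\]
Then $I\in\Sigma$, so $\Sigma$ is nonempty. For a chain $\{J_\lambda\}_{\lambda\in\Lambda}$ in $\Sigma$, the join $\bigvee_\lambda J_\lambda$ is an ideal containing each $J_\lambda$, and any element of it is bounded above by a finite join of elements drawn from a single $J_\lambda$ (using directedness); hence $x^n\in\bigvee_\lambda J_\lambda$ would place $x^n$ in some $J_\lambda$, a contradiction. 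By Zorn's lemma $\Sigma$ has a maximal element $P$, and since $x=x^1\notin P$, the proof reduces to verifying that $P$ is prime.

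The heart of the argument, which is where the multiplicative-lattice structure genuinely enters, is the primality of $P$. Suppose $a\pr b\in P$ but $a\notin P$ and $b\notin P$. Then $P\vee\langle a\rangle$ and $P\vee\langle b\rangle$ strictly contain $P$, so by maximality neither lies in $\Sigma$: there exist $m,n\in\mathds{N}$ with $x^m\in P\vee\langle a\rangle$ and $x^n\in P\vee\langle b\rangle$. Using the distributivity from Proposition~\ref{bpi}(\ref{ijd}) together with Proposition~\ref{bpi}(\ref{mip}), I would expand
\[
(P\vee\langle a\rangle)\pr(P\vee\langle b\rangle)=(P\pr P)\vee(P\pr\langle b\rangle)\vee(\langle a\rangle\pr P)\vee(\langle a\rangle\pr\langle b\rangle),
\]
and show each summand lies in $P$: the first three by $P\pr X\preccurlyeq P$, and the last because $a\pr b\in P$ forces $\langle a\rangle\pr\langle b\rangle\preccurlyeq P$ (every element of the product is below a finite join of scalar multiples of $a\pr b$). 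Hence $x^{m+n}=x^m\pr x^n\in P$, contradicting $P\in\Sigma$. The main obstacle is really just this bookkeeping: making sure that $x^m\in P\vee\langle a\rangle$ combined with the quantale axioms yields $x^{m+n}\in P\vee\langle a\pr b\rangle=P$, which is a clean but slightly delicate lattice computation. Once $P$ is prime, the inclusion $\bigwedge P\preccurlyeq\rd(I)$ follows immediately by contrapositive.
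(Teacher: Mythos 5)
Your proposal is correct and follows essentially the same route as the paper: the easy inclusion via primality, and the reverse inclusion by contrapositive, applying Zorn's lemma to the family of ideals containing $I$ and avoiding every power of $x$, then checking that a maximal such ideal is prime. The only difference is that you spell out the computation $x^m\in P\vee\langle a\rangle$, $x^n\in P\vee\langle b\rangle\Rightarrow x^{m+n}\in P\vee\langle a\pr b\rangle$, which the paper compresses into a single ``which followingly implies'' step.
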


	\begin{proof}
		Let $l\in \rd(I).$ Then there exists  $n\in \mathds{N}$ such that $l^n\in I$, and also $l^n\in P,$ for all $P\in \spc$ with $I\preccurlyeq P$. This implies that $l\in P$ for all such $P\in \spc.$ Hence, $l\in \bigwedge_{ P}\left\{P\in \spc\mid  I \preccurlyeq P\right\}.$ Conversely, assume that $l \notin \rd(I),$ for some $l\in \la$. Consider the set 
		\[\Omega=\{ J\in \id\mid I\preccurlyeq J \;\text{and}\; l^n\notin J, \forall n\in \mathds{N}\}.\]
		It is easy to see that $\Omega$ is nonempty and by Zorn's lemma, $(\Omega, \leqslant)$ has a maximal element, say $P$ such that $I\preccurlyeq P$. It suffices to show that $P$ is a prime ideal. Suppose $x,$ $y\notin  P$ for some $x,$ $y\in \la$. This implies, $\langle x, P\rangle\notin \Omega$ $\langle x, P\rangle\notin \Omega,$ which followingly implies $\langle x\pr y, P\rangle \notin \Omega$. Hence, $x\pr y\notin P$.
	\end{proof}
	
	In the next lemma, we compile some elementary properties of the radical of an ideal in a quantale.

	\begin{lemma}\label{radpr}
		For any ideals $I$, $J$, $\{I_{\lambda}\}_{\lambda\in \Lambda}$ in a quantale $\la$, the following hold.
		\begin{enumerate}
			\item\label{irad} $\rd(I)$ is an ideal containing $I$.	
			
			\item\label{ijrirj} If $I\preccurlyeq  J$, then $\rd(I)\preccurlyeq \rd(J).$
			
			\item $\rd(\rd(I)) =\rd(I).$
			
			\item $\rd(I)=\rd(I\pr \cdots \pr I)$ $($repeated $n$-times$)$.
			
			\item\label{ijicj} $\rd(I\wedge J)=\rd(I)\wedge \rd(J)=\rd(I\pr J).$
			
			\item $\bigvee_{\lambda \in \Lambda} \rd(I_{\lambda})\preccurlyeq \rd \left(\bigvee_{\lambda \in \Lambda} I_{\lambda}\right).$
			
			\item $\rd(I)=\la$ if and only if $I=\la.$
			
			\item $\rd(I\vee J)=\rd(\rd(I)\vee \rd(J)).$
		\end{enumerate}
	\end{lemma}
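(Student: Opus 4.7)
The plan is to prove the eight items in order, since later parts build on earlier ones. Items (2), (6), and (7) reduce quickly to the definitions: for (2) note that $x^n \in I$ forces $x^n \in J$; for (6) each $\rd(I_\lambda)$ sits inside $\rd(\bigvee I_\lambda)$ by (2), and the right-hand side is an ideal by (1); for (7) observe that $\top \pr \top = \top$ by Definition \ref{mld}(\ref{mid}) gives $\top^n = \top$, so $\top \in \rd(I)$ forces $\top \in I$. These I would dispatch in a single sentence each.

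The substantive case is (1). Containment $I \preccurlyeq \rd(I)$ is immediate (take $n = 1$), and downward closure follows from Lemma \ref{bip}(\ref{monj}): if $l \preccurlyeq x$ and $x^n \in I$, then iterating monotonicity gives $l^n \preccurlyeq x^n$, hence $l^n \in I$ by the ideal property of $I$. The delicate step is closure under join: given $x, y \in \rd(I)$ with $x^n, y^m \in I$, I would invoke the binomial expansion from Lemma \ref{bip}(\ref{binom}) for the exponent $n+m-1$ (the integer coefficients collapse under the idempotent join, so effectively one has $(x \vee y)^{n+m-1} = \bigvee_{k=0}^{n+m-1} x^{n+m-1-k} \pr y^k$); in each summand either $n+m-1-k \geq n$ and the factor $x^{n+m-1-k}$ lies in $I$, or else $k \geq m$ and the factor $y^k$ lies in $I$. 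Either way the summand lies in $I$ by Remark \ref{iax}, and the join of finitely many elements of $I$ is in $I$, so $(x \vee y)^{n+m-1} \in I$. This is the main obstacle, and the only place where the multiplicative structure of $\la$ really enters the proof.

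With (1) established, (3) is a short two-step verification: $\rd(I) \preccurlyeq \rd(\rd(I))$ by applying (1) to $\rd(I)$, while $x \in \rd(\rd(I))$ gives $x^n \in \rd(I)$, then $x^{nm} = (x^n)^m \in I$, so $x \in \rd(I)$. For (4) I would use Proposition \ref{bpi}(\ref{mip}) to get $I^n \preccurlyeq I$ and hence $\rd(I^n) \preccurlyeq \rd(I)$ via (2); conversely, $x^k \in I$ yields $x^{kn} = (x^k)^n \in I^n$ by the definition of the product ideal (a finite join of elements of the form $i_1 \pr \cdots \pr i_n$), so $x \in \rd(I^n)$.

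Finally, (5) splits into two inclusions using Proposition \ref{bpi}(\ref{mip}). From $I \pr J \preccurlyeq I \wedge J \preccurlyeq I, J$ and monotonicity (2), we get $\rd(I \pr J) \preccurlyeq \rd(I \wedge J) \preccurlyeq \rd(I) \wedge \rd(J)$. To close the loop, suppose $x \in \rd(I) \wedge \rd(J)$ with $x^n \in I$ and $x^m \in J$; then $x^{n+m} = x^n \pr x^m$ is a product of an element of $I$ with an element of $J$, hence lies in $I \pr J$ by Definition \ref{opi}(\ref{prodc}), giving $x \in \rd(I \pr J)$. For (8), apply (6) to get $\rd(I) \vee \rd(J) \preccurlyeq \rd(I \vee J)$, then (2) and (3) yield $\rd(\rd(I) \vee \rd(J)) \preccurlyeq \rd(I \vee J)$; the reverse inclusion uses $I, J \preccurlyeq \rd(I) \vee \rd(J)$, so $I \vee J \preccurlyeq \rd(I) \vee \rd(J)$ and another application of (2) finishes the argument.
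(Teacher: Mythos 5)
Your proposal is correct and follows essentially the same route as the paper: the binomial expansion of Lemma \ref{bip}(\ref{binom}) for join-closure in (1), and element-chasing with exponents for (3)--(5) and (8). The only (minor) divergences are that you prove (2) and (7) directly from the definition of $\rd$ where the paper cites the prime-ideal characterization of Proposition \ref{edr}, and in (5) you close a cycle of three inclusions via $x^{n+m}=x^n\pr x^m\in I\pr J$ where the paper uses $(I\wedge J)\pr(I\wedge J)\preccurlyeq I\pr J\preccurlyeq I\wedge J$ together with parts (2) and (4); both variants are valid and of the same difficulty.
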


	\begin{proof} 
		(1)	 By taking $n=1$, it follows that $I\preccurlyeq \rd(I).$ To show $\rd(I)$ is an ideal, let $x\in \rd(I)$ and let $y\preccurlyeq x$, for some $y\in \la$. Then $x^n\in I$ for some $n\in \mathds{N}.$  By Lemma \ref{bip}(\ref{mon}), this implies $y^n\preccurlyeq x^n$, and since $I$ is an ideal, we obtain that $y^n\in I$. Hence $y\in \rd(I),$ showing that the condition (\ref{cul}) of Definition \ref{iml} holds. To check condition (\ref{cuj}) of Definition \ref{iml}, let $x,$ $y\in \rd(I)$. Then $x^n,$ $y^m\in \rd(I)$ for some $n$, $m\in \mathds{N}$. It suffices to show that $(x\vee y)^{m+n}\in I.$ Applying formula (\ref{bif}), we obtain
		\[(x\vee y)^{m+n}=\bigvee_{k=0}^{m+n} \binom{m+n}{k}\,x^{m+n-k}\pr y^k.\]
		Since $0\leqslant k \leqslant m+n$, we have two possibilities: either $n\leqslant i$, in which case, $x^i\in I$ (since $x^n\in I$); or $m\leqslant n+m-i,$ in which case $y^{n+m-i}\in I$  (since $y^m\in I$). This proves that $(x\vee y)^{m+n}\in I.$
		
		(2) Follows from Proposition \ref{edr}.
		
		(3) Applying (2) on  $I\preccurlyeq \rd(I)$ (which follows from (1)) gives $\rd(I)\preccurlyeq \rd(\rd(I)).$ Conversely, suppose that $x\in \rd(\rd(I)).$ This implies $x^n\in \rd(I)$ for some $n\in \mathds{N}$, which further implies that $(x^n)^m\in I,$ for some $m\in \mathds{N}$. However, $(x^n)^m=x^{nm}\in I.$ Hence $x\in \rd(I).$
		
		(4) Since $I\pr \cdots \pr I\preccurlyeq I$, by (2), we have $\rd(I\pr \cdots \pr I)\preccurlyeq \rd(I).$ If $x\in \rd(I)$, then $x^m\in I$, for some $m\in \mathds{N}$. But then $x^{mn}=(x^m)^n\in I\pr \cdots \pr I,$ implying that $x\in \rd (I\pr \cdots \pr I).$
		
		(5) For the first equality, $\rd(I\wedge J)\preccurlyeq \rd(I)\wedge \rd(J)$ follows from (2). If $x\in \rd(I)\wedge \rd(J),$ then $x^n\in \rd(I)$ and $x^m\in \rd(J)$ for some $n,$ $m\in \mathds{N}$. Let $k=\max\{n,m\}.$ Then $x^k\in I\wedge J,$ and hence $x\in \rd(I\wedge J).$ For the second equality, note that \[(I\wedge J)\pr(I\wedge J)\preccurlyeq I\pr J\preccurlyeq I\wedge J.\]
		Therefore, by (2) and (4), we obtain
		\[\rd((I\wedge J)\pr (I\wedge J))=\rd(I\wedge J)\preccurlyeq \rd(I\pr J)\preccurlyeq \rd (I\wedge J).\]
		
		(6) Follows from (2).
		
		(7) Follows from Proposition \ref{edr}.	
		
		(8) By (2) and (3), it follows that \[\rd(\rd(I)\vee \rd(J))\preccurlyeq \rd(\rd(I\vee J))=\rd(I\vee J).\] Since by (1), $I\preccurlyeq \rd(I)$ and $J\preccurlyeq \rd(J),$ we obtain $I\vee J\preccurlyeq \rd(I)\vee \rd(J),$ and by applying (2), we get the desired inclusion.	
	\end{proof}
	
	Expanding upon the previously introduced definition of the radical of an ideal in a quantale, we shall now introduce two additional types of radicals specific to quantales: nilradicals and Jacobson radicals. Additionally, we shall explore the relationships that exist among these distinct types of radicals.

	\begin{definition}
		Let $\la$ be a quantale.
		\begin{enumerate}
			\item An element $x$ of $\la$ is called \emph{nilpotent} if $x^n=\bot$ for some positive integer $n$. The set $\nl(\la)$ of all nilpotent elements of a quantale $\la$ is called the \emph{nilradical} of $\la$. A quantale $\la$ is called \emph{reduced} if $\nl(\la)=0$
			
			\item The \emph{Jacobson radical} $\mathcal{J}(\la)$ of a quantale $\la$ is defined as the intersection of all maximal ideals in $\la$.
			
			\item A \emph{zero-divisor} of $\la$ is an element $x$ of $\la$ for which there exists an element $y$ ($\neq \bot$) in $\la$ such that $x\pr y=\bot$. 
			If $\bot\neq \top$ in $\la$ and if $\la$ does not have any nonzero zero-divisors, then $\la$ is called a \emph{quantale domain} (QD). Note that a quantale $\la$ is a QD if and only if $0$ is a prime ideal.
		\end{enumerate}	
	\end{definition}

	\begin{proposition}
		Let $\la$ be a quantale. Then the following holds.
		\begin{enumerate}
			\item 
			$\nl(\la)\in \id$ and it is the intersection of prime ideals in $\la$. Moreover, $\nl(\la)\preccurlyeq \mathcal{J}(\la).$ 
			

			
			\item 
			$\la$ is reduced and has only one minimal prime ideal
			if and only if $\la$ is a QD.
		\end{enumerate}	
	\end{proposition}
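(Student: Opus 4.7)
The key observation to launch both parts of the proof is that $\nl(\la) = \rd(0)$: indeed, since the zero ideal contains only $\bot$, we have $x^n \in 0$ iff $x^n = \bot$. Once this identification is made, most of the work is already done by earlier results in the paper.

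For part (1), that $\nl(\la)$ is an ideal is immediate from Lemma \ref{radpr}(\ref{irad}) applied to $I = 0$. To see that $\nl(\la)$ equals the intersection of prime ideals, invoke Proposition \ref{edr} with $I = 0$: since every prime ideal $P \in \spc$ contains $0$ trivially, we get
\[
\nl(\la) \;=\; \rd(0) \;=\; \bigwedge_{P}\{P \in \spc \mid 0 \preccurlyeq P\} \;=\; \bigwedge_{P\in \spc} P.
\]
For the inequality $\nl(\la) \preccurlyeq \mathcal{J}(\la)$, the plan is to show that every maximal ideal is prime, from which $\bigwedge \spc \preccurlyeq \bigwedge \mx(\la) = \mathcal{J}(\la)$ follows. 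To prove maximal $\Rightarrow$ prime, take $M$ maximal and $a \pr b \in M$ with $a \notin M$; then $\langle a \rangle \vee M = \la$ by maximality, so $\top = x \vee m$ for some $x \preccurlyeq q \pr a$ (using distributivity to absorb the finite join in the definition of $\langle a\rangle$) and some $m \in M$. Multiplying by $b$ and using Definition \ref{mld}(\ref{jdp}) together with $a\pr b \in M$ then forces $b \in M$.

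For part (2), both implications are short corollaries of part (1) combined with Proposition \ref{lpsp}(\ref{mpiq}). For the forward direction, suppose $\la$ is reduced with unique minimal prime $P_0$. Reducedness gives $\nl(\la) = 0$; by part (1) this says $\bigwedge_{P\in\spc} P = 0$. By (the intended content of) Proposition \ref{lpsp}(\ref{mpiq}), every prime ideal contains a minimal prime, so the intersection of all primes equals the intersection of minimal primes, which under the uniqueness hypothesis is just $P_0$. Therefore $P_0 = 0$, so $0$ is prime and $\la$ is a QD. For the reverse direction, if $\la$ is a QD then $0$ is prime, so $0$ itself is a minimal prime, and being contained in every other prime it is the unique one; moreover $\nl(\la) = \bigwedge \spc \preccurlyeq 0$, so $\la$ is reduced.

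The main obstacle is really just the little verification that maximal ideals are prime, which is not stated explicitly earlier in the excerpt and requires the careful unpacking of the join of ideals via Definition \ref{opi}(\ref{join}) together with the distributivity axiom. Everything else is a direct bookkeeping application of $\nl(\la) = \rd(0)$, Proposition \ref{edr}, Lemma \ref{radpr}, and the existence of minimal primes below any prime.
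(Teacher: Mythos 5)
Your proof is correct and takes essentially the same route as the paper: identify $\nl(\la)$ with $\rd(0)$, apply Proposition \ref{edr} to exhibit it as the intersection of prime ideals, and for part (2) use that every prime contains the unique minimal prime so that $\bigwedge_{P\in\spc}P=P_0=0$. Your two small deviations are both improvements rather than a different method: you obtain that $\nl(\la)$ is an ideal from Lemma \ref{radpr}(\ref{irad}) via $\nl(\la)=\rd(0)$ instead of repeating the binomial argument, and you supply a correct verification (via $\langle a\rangle\vee M=\la$ and distributivity) that maximal ideals are prime, a fact the paper's proof of $\nl(\la)\preccurlyeq\mathcal{J}(\la)$ invokes but never establishes.
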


	\begin{proof}
		(1) First we show $\nl(\la)$ is an ideal in $\la$. Suppose $x\in \nl(\la)$ and $y\preccurlyeq x$. Then $x^n=0$ for some $n\in \mathds{N}$ and by Lemma \ref{bip}(\ref{monj}), $y^n\preccurlyeq x^n=0$, implying that $y^n=0$, and hence, $y\in \nl(\la).$ Now, let $x,$ $y\in \nl(\la)$. This implies $x^n=0$ and $y^m=0$ for some $n,$ $m\in \mathds{N}$. Observe that  $(x\vee y)^{m+n-1}$ is the join of integer multiples of elements $x^r\pr y^s$ (see proof of Lemma \ref{radpr}(\ref{irad})), where $r+s=m+n-1$. Since we cannot have both $r<m$ and $s<n$, each of these terms vanishes, and hence \[(x\vee y)^{m+n-1}=0.\] The fact that $\nl(\la)$ is the intersection of prime ideals in $\la$ follows from Proposition \ref{edr}.
		Suppose $x\in \nl(\la).$ Then there exists $n\geqslant 1$ such that $x^n=\bot$. Let $M$ be a maximal ideal in $\la$. Then $x^n\in M$ and $M$ is prime, $x\in M$, proving that $\nl(\la)\preccurlyeq \mathcal{J}(\la)$.
		
		(2) Suppose $\la$ is reduced. Then $\nl(\la)=0$. By Proposition \ref{edr}, this implies \[0=\bigwedge_{ P\in \spc} P.\] Then by Lemma \ref{lpsp}(\ref{mpiq}) and hypothesis, there exists a prime ideal $P$ such that $P=0$. Hence $\la$ is a QD. The converse follows from the fact that $\la$ is a QD if and only if $0$ is a prime ideal in $\la$. 
	\end{proof}

	Our next objective is to establish the equivalence between semiprime ideals and radical ideals within a quantale. This equivalence is well-known in the realm of (noncommutative) rings. However, in the noncommutative scenario, the concepts of $m$-systems and $n$-systems in rings are necessary. On the other hand, when dealing with ideals in quantales (or commutative rings), the presence of multiplicatively closed subsets alone suffices. To demonstrate this equivalence, we shall first proceed by presenting a series of lemmas.

	\begin{lemma}\label{rpms}
		An ideal $P$ in a quantale $\la$ is prime if and only if the complement of $P$ in $\la$ forms a multiplicatively closed subset of $\la$. Here, a subset $S$ of $\la$ is considered multiplicatively closed if it satisfies two conditions: $\top\in S$, and for any $x\in S$ and $y\in S$, implies $x\pr y\in S$.
	\end{lemma}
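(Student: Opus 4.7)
The plan is to unravel the definitions on both sides and show that each condition in the definition of a multiplicatively closed subset corresponds, via contrapositive, to part of the definition of a prime ideal.

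For the forward direction, I would assume $P$ is prime and verify the two conditions defining a multiplicatively closed set for $S := \la \neg P$. First, to show $\top \in S$, I would argue that if instead $\top \in P$, then by Definition \ref{iml}(\ref{cul}) every $l \in \la$ satisfies $l \preccurlyeq \top \in P$, forcing $l \in P$; this would give $P = \la$, contradicting the fact that a prime ideal is proper. Hence $\top \notin P$, i.e., $\top \in S$. Second, for closure under $\pr$, I would take $x,y \in S$ and contrapose the prime condition: if $x \pr y \in P$, then $x \in P$ or $y \in P$; since neither holds, $x \pr y \notin P$, so $x \pr y \in S$.

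For the converse, I would assume $\la \neg P$ is multiplicatively closed and derive the two conditions making $P$ prime. Properness of $P$ is immediate: $\top \in \la \neg P$ gives $\top \notin P$, so $P \neq \la$. For the defining implication, suppose $x \pr y \in P$ and, for contradiction, $x \notin P$ and $y \notin P$; then $x, y \in \la \neg P$, and by multiplicative closure $x \pr y \in \la \neg P$, contradicting $x \pr y \in P$. Hence $x \in P$ or $y \in P$, so $P$ is prime.

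I do not anticipate any real obstacle; the only step requiring a little care is the verification $\top \notin P$, which depends on combining the downward-closure axiom of an ideal with the fact that $\top$ is the top element. Everything else is a direct contrapositive of the definitions, so the proof should be only a few lines.
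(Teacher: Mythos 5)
Your proof is correct and takes essentially the same route as the paper: both directions are direct contrapositives of the definition of a prime ideal. In fact you are slightly more careful than the paper, which omits the verification that $\top\in\la\neg P$ (equivalently, that $P$ is proper); your observation that this follows from downward closure and properness is exactly the right justification.
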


	\begin{proof}
		Suppose that  $P$ is a prime ideal in $\la$ and $x,$ $y\in \la\neg P$. Then by Lemma \ref{lpsp}(\ref{kpp}), $x\pr y\notin P$, and hence $x\pr y\in \la\neg P.$ Conversely, suppose $x,$ $y\notin P$. This implies that $x,$ $y\in \la\neg P$. Since $\la\neg P$ is multiplicatively closed, $x\pr y\in \la\neg P$, and hence $x\pr y\notin P$, proving that $P$ is a prime ideal.
	\end{proof}

	\begin{lemma}\label{mxkp}
		Let $S$ be a multiplicatively closed subset in a quantale $\la$. Suppose $P\in \id$ such that it is maximal with respect to the property: $P\cap S=\emptyset$. Then $P\in \spc$.
	\end{lemma}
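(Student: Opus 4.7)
The plan is to argue by contradiction, using maximality of $P$ in exactly the same way as the classical ring-theoretic proof (and exploiting Lemma \ref{lpsp}(\ref{kpp}) to reduce to a product of two ideals rather than a product of two elements).

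First, I would suppose $P$ is not prime. Then there exist $x,y \in \la$ with $x\pr y \in P$, yet $x \notin P$ and $y \notin P$. Consider the ideals $\langle P \cup \{x\}\rangle$ and $\langle P \cup \{y\}\rangle$ generated from $P$ together with $x$ and with $y$ respectively (in the sense of Definition \ref{igs}). Both strictly contain $P$ by construction, so by the maximality assumption on $P$, neither can be disjoint from $S$; pick $s_1 \in \langle P \cup \{x\}\rangle \cap S$ and $s_2 \in \langle P \cup \{y\}\rangle \cap S$. Since $S$ is multiplicatively closed, $s_1\pr s_2 \in S$.

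The main step, and the only one requiring care, is to show $s_1\pr s_2 \in P$, which will contradict $P \cap S = \emptyset$. Unwinding Definition \ref{igs}, one has
\[
s_1 \;\preccurlyeq\; p_1 \vee (a\pr x), \qquad s_2 \;\preccurlyeq\; p_2 \vee (b\pr y),
\]
for some $p_1, p_2 \in P$ and $a, b \in \la$ (the finite joins of terms from $\la \pr (P \cup \{x\})$ collapse into a single element of $P$ joined with a single multiple of $x$, because $P$ is closed under joins and downward-closed, and similarly for $y$). Then by Lemma \ref{bip}(\ref{monj}) together with distributivity (Definition \ref{mld}(\ref{jdp})), I obtain
\[
s_1 \pr s_2 \;\preccurlyeq\; (p_1\pr p_2)\, \vee\, (p_1\pr b\pr y)\, \vee\, (a\pr x\pr p_2)\, \vee\, (a\pr b\pr (x\pr y)).
\]
Each join-summand lies in $P$: the first three because they are $\preccurlyeq$ a product involving $p_1$ or $p_2 \in P$ (using Lemma \ref{bip}(\ref{mul}) and that $P$ is downward-closed), and the fourth because $x\pr y \in P$. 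Since $P$ is closed under finite joins, the displayed upper bound is in $P$, and downward-closedness of $P$ then gives $s_1\pr s_2 \in P$.

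The anticipated obstacle is mainly bookkeeping: making sure an arbitrary element of $\langle P \cup \{x\}\rangle$ really can be written as $\preccurlyeq p \vee (a\pr x)$ rather than an uncontrolled finite join of such terms. This is handled by noting that any finite join of elements of $P$ is still an element of $P$, and any finite join $\bigvee_i (a_i\pr x)$ equals $\bigl(\bigvee_i a_i\bigr)\pr x$ by Definition \ref{mld}(\ref{jdp}); splitting an arbitrary generator of $\la \pr (P \cup \{x\})$ into its $P$-part and its $x$-part and regrouping reduces to the two-term bound used above. Once this reduction is in place, the rest of the argument is a direct distributive expansion, and $P$ is shown to be prime.
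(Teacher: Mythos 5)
Your proof is correct and follows essentially the same route as the paper's: use maximality to find elements of $S$ in $\langle P\cup\{x\}\rangle$ and $\langle P\cup\{y\}\rangle$, multiply them, expand by distributivity, and observe that every join-summand lands in $P$ (given $x\pr y\in P$), contradicting $P\cap S=\emptyset$. Your bookkeeping reduction of a general element of $\langle P\cup\{x\}\rangle$ to the form $\preccurlyeq p\vee(a\pr x)$ is in fact tidier than the paper's own presentation.
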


	\begin{proof}
		Suppose $x\notin P$ and $y\notin P$ for some $x,$ $y\in \la$.  By the property on $P$, this implies that the ideals $\langle x, P\rangle$ and $\langle y, P\rangle$ both intersect with $S$. Hence, there exist $l,$ $l'\in \la$, $p,$ $p'\in P$, and $s,$ $s'\in S$ such that $(x\pr l)\vee (p\pr s)\in S$ and $(y\pr l')\vee (p'\pr s')\in S$. Since $S$ is a multiplicatively closed set, we must have $((x\pr l)\vee (p\pr s))\pr ((y\pr l')\vee (p'\pr s'))\in S$. On the other hand,
		\begin{align*}
			((x\pr l)\vee (p\pr s))((y\pr l')\vee (p'\pr s'))&\\
			=(x\pr y\pr l\pr l')&\vee (p'\pr (x\pr l\pr s'))\vee (p\pr (y\pr l'\pr s))\vee (p\pr p'\pr (s\pr s'))\in \langle x\pr y, P\rangle,
		\end{align*}
		which implies $x\pr y\notin P$. Hence $P$ is a prime ideal.
	\end{proof}

	\begin{lemma}\label{rkt}
		Let $\la$ be a quantale. For every $I\in \id$, $\rd(I)$ is equal to the set \[T=\{l\in \la\mid \mathrm{every\; multiplicatively\; closed\; subset\; containing}\;l\;\mathrm{intersects}\;I\}.\]
	\end{lemma}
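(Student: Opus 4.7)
The plan is to establish the two inclusions $\rd(I)\preccurlyeq T$ and $T\preccurlyeq \rd(I)$ separately, with straightforward arguments drawn directly from the definitions. Before starting I would dispose of the trivial case $I=\la$, in which both $\rd(I)$ and $T$ coincide with $\la$, so that in the rest of the argument I may assume $I$ is proper and in particular $\top\notin I$.

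For the forward inclusion $\rd(I)\preccurlyeq T$, I would take $l\in\rd(I)$, so that $l^n\in I$ for some $n\in\mathds{N}$. Given any multiplicatively closed subset $S$ of $\la$ containing $l$, the conditions $\top\in S$ and closure under $\pr$ yield $l^k\in S$ for every $k\in\mathds{N}$ by an obvious induction; in particular $l^n\in S\wedge I$, which shows that $S$ meets $I$. Hence $l\in T$.

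For the reverse inclusion $T\preccurlyeq \rd(I)$, I would argue by contrapositive. Given $l\notin \rd(I)$, the natural candidate for a multiplicatively closed subset containing $l$ but avoiding $I$ is
\[
S=\{\top,\, l,\, l^2,\, l^3,\,\ldots\}.
\]
This set contains $\top$ and is closed under $\pr$, since $l^i\pr l^j=l^{i+j}$ and $\top\pr l^i=l^i$. By the definition of $\rd(I)$, no positive power of $l$ lies in $I$, and by properness of $I$ we also have $\top\notin I$; hence $S\wedge I=\emptyset$, so $l\notin T$.

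Alternatively, and perhaps more in keeping with the preceding material, the reverse inclusion follows at once from Proposition \ref{edr} and Lemma \ref{rpms}: if $l\notin \rd(I)$ then some $P\in\spc$ contains $I$ but not $l$, and Lemma \ref{rpms} makes $\la\neg P$ a multiplicatively closed subset containing $l$ and disjoint from $I\preccurlyeq P$. I do not anticipate a genuine obstacle here; the lemma is essentially a repackaging of the definition of the radical once the correspondence between prime ideals and complements of multiplicatively closed subsets is available, and the only point demanding any care is the bookkeeping around $\top$ in the definition of a multiplicatively closed set.
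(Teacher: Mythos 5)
Your proof is correct, but it takes a genuinely different and more elementary route than the paper. The paper derives both inclusions from the prime-ideal machinery: for $T\preccurlyeq\rd(I)$ it combines Proposition \ref{edr} with Lemma \ref{rpms} (the complement of a prime over $I$ is a multiplicatively closed set missing $I$, so any $l\in T$ must lie in every such prime), and for $\rd(I)\preccurlyeq T$ it invokes Zorn's lemma together with Lemma \ref{mxkp} to produce a prime containing $I$ and disjoint from a given multiplicative set. You instead work directly with the power definition of $\rd(I)$: the forward inclusion is immediate since any multiplicatively closed set containing $l$ contains $l^n$, and the reverse inclusion uses the explicit multiplicative set $\{\top, l, l^2,\ldots\}$ (after correctly disposing of the case $I=\la$ so that $\top\notin I$). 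What each buys: your argument is shorter, self-contained, and avoids Zorn's lemma entirely, whereas the paper's version reuses Lemmas \ref{rpms} and \ref{mxkp} and keeps the radical--prime--multiplicative-set correspondence in the foreground, which is the thread it continues with Lemma \ref{mms} and Theorem \ref{spkr}. Note that your set of all powers of $l$ is essentially the same device as the paper's Lemma \ref{mms}, so your proof effectively folds that lemma into this one; your stated alternative for the reverse inclusion (via Proposition \ref{edr} and Lemma \ref{rpms}) is likewise sound and is exactly half of the paper's own argument.
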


	\begin{proof}
		Suppose that  $r\in T$ and $P\in \spc$ such that $I\preccurlyeq  P$. Then by Lemma \ref{rpms}, $\la\neg P$ is a multiplicatively closed subset of $\la$ and $l\notin  \la\neg P.$ Hence $l\in P$. Conversely, let $l\notin T$. This implies that there exists a multiplicatively closed subset $S$ of $\la$ such that $l\in S$ and $S\wedge  I=\emptyset.$ By Zorn's lemma, there exists an ideal $P$ containing $I$ and maximal with respect to the property that $P\wedge  S=\emptyset$. By Lemma \ref{mxkp},  $P$ is a prime ideal with $l\notin P$.
	\end{proof}

	\begin{lemma}\label{mms}
		Suppose $I$ is a semiprime ideal in a quantale $\la$ and suppose $x\in \la\neg I$. Then there exists a multiplicatively closed subset $S$ of $\la$ such that $x\in S\preccurlyeq  \la\neg I.$
	\end{lemma}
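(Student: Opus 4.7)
The plan is to exhibit $S$ explicitly as the set of powers of $x$ together with the top element, namely
\[
S=\{\top\}\cup\{x^n\mid n\in\mathds{N}\}.
\]
By Definition \ref{mld}(\ref{mla})--(\ref{mid}), $S$ is closed under the operation $\pr$, and $\top\in S$, so $S$ is multiplicatively closed in the sense of Lemma \ref{rpms}. Since $x\in S$ by construction, the only nontrivial point is to verify that $S\preccurlyeq\la\neg I$, i.e.\ that no power $x^n$ lies in $I$ and that $\top\notin I$. The latter is immediate because semiprime ideals are proper by definition.

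To handle the powers, the key step is the following claim: if $I$ is semiprime and $x\notin I$, then $x^{2^k}\notin I$ for every $k\geqslant 0$. This follows by a straightforward induction on $k$: the base case $k=0$ is the hypothesis, and for the inductive step, if $x^{2^{k+1}}=(x^{2^k})^2\in I$, then the semiprime property applied to the element $x^{2^k}$ forces $x^{2^k}\in I$, contradicting the induction hypothesis.

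Next I would upgrade this from powers of the form $2^k$ to arbitrary $n\in\mathds{N}$ using Lemma \ref{bip}. Given any $n\geqslant 1$, choose $k$ with $2^k\geqslant n$, and write $2^k=n+r$ with $r\geqslant 0$. By repeated application of Lemma \ref{bip}(\ref{mul}) (together with $x^0=\top$ when $r=0$), we have
\[
x^{2^k}=x^n\pr x^r\preccurlyeq x^n\wedge x^r\preccurlyeq x^n.
\]
If $x^n$ were in $I$, then by the downward-closure condition (\ref{cul}) of Definition \ref{iml}, the element $x^{2^k}$ would also lie in $I$, contradicting the claim established in the previous paragraph. Hence $x^n\notin I$ for all $n\in\mathds{N}$, which together with $\top\notin I$ shows $S\cap I=\emptyset$, completing the proof.

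The only subtle step is the passage from $2^k$-powers to arbitrary $n$-powers; this is where the order-theoretic side of the quantale (via $x^n\pr x^r\preccurlyeq x^n$) is essential, since the semiprime property is only about squares. Everything else is formal.
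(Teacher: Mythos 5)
Your proof is correct, and it follows the same underlying idea as the paper's --- manufacturing a multiplicatively closed set out of powers of $x$ --- but your execution is more careful and in fact repairs gaps in the paper's own argument. The paper takes $S=\{x_1,x_2,\ldots\}$ with $x_1=x$ and $x_{n+1}=x_n\pr x_n$, i.e.\ the set $\{x^{2^k}\mid k\geqslant 0\}$, and merely asserts that this is multiplicatively closed and (implicitly) disjoint from $I$. As a literal set of elements, $\{x^{2^k}\}$ is \emph{not} closed under $\pr$ (for instance $x\pr x^2=x^3$ is not of the form $x^{2^k}$), it need not contain $\top$ as required by the definition of multiplicatively closed in Lemma \ref{rpms}, and the paper never verifies $S\cap I=\emptyset$, which is the only place the semiprime hypothesis is actually used. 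Your choice $S=\{\top\}\cup\{x^n\mid n\in\mathds{N}\}$ is genuinely closed under $\pr$ and contains $\top$, and your two-stage disjointness argument --- first $x^{2^k}\notin I$ by induction on $k$ using semiprimeness, then $x^n\notin I$ for all $n$ via $x^{2^k}=x^n\pr x^r\preccurlyeq x^n$ together with the downward closure condition (\ref{cul}) of Definition \ref{iml} --- supplies exactly the verification the paper omits. In short, same strategy, but your version is the complete one.
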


	\begin{proof}
		Define the elements of $S=\{x_1, x_2, \ldots, x_n, \ldots\}$ inductively as follows:
		$x_1:= x;$
		$x_2:=x_1\pr x_1;$
		$\ldots$;
		$x_n:=x_{n-1}\pr x_{n-1};$ $\ldots$. Obviously, $x\in S$ and it is also easy to see that $x_i,$ $x_j\in S$ implies that $x_i\pr x_j\in S$.
	\end{proof}

	\begin{theorem}\label{spkr}
		For any ideal $I$ in a quantale $\la$, the following are equivalent.	\begin{enumerate}
			
			\item\label{iksp} $I$ is semiprime.
			
			\item\label{iinp} $I$ is an intersection of prime ideals in  $\la $.
			
			\item\label{iradi} $I$ is a radical ideal.
		\end{enumerate}
	\end{theorem}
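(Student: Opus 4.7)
The plan is to establish the equivalences by the cyclic chain $(\ref{iksp}) \Rightarrow (\ref{iinp}) \Rightarrow (\ref{iradi}) \Rightarrow (\ref{iksp})$, leveraging the preceding lemmas on multiplicatively closed subsets and Proposition \ref{edr}.

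For $(\ref{iradi}) \Rightarrow (\ref{iksp})$, this is the easiest step and I would dispatch it first conceptually: if $I = \rd(I)$ and $x^2 \in I$, then by the very definition of the radical $x \in \rd(I) = I$, so $I$ is semiprime.

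For $(\ref{iinp}) \Rightarrow (\ref{iradi})$, suppose $I = \bigwedge_{\alpha} P_\alpha$ for some family of prime ideals $P_\alpha \in \spc$. By Lemma \ref{radpr}(\ref{irad}) we always have $I \preccurlyeq \rd(I)$. For the reverse inclusion, each $P_\alpha$ contains $I$, hence by Proposition \ref{edr}, $\rd(I) = \bigwedge\{P \in \spc \mid I \preccurlyeq P\} \preccurlyeq \bigwedge_\alpha P_\alpha = I$. Therefore $\rd(I) = I$, as desired.

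For $(\ref{iksp}) \Rightarrow (\ref{iinp})$, which I expect to be the main obstacle, I would show that $I = \bigwedge\{P \in \spc \mid I \preccurlyeq P\}$, i.e., that every $x \in \la \neg I$ lies outside some prime ideal containing $I$. Fix $x \notin I$. By Lemma \ref{mms}, since $I$ is semiprime, there is a multiplicatively closed subset $S$ with $x \in S \preccurlyeq \la \neg I$, so in particular $S \wedge I = \emptyset$. An application of Zorn's lemma to the family of ideals containing $I$ and disjoint from $S$ (which is nonempty since $I$ itself lies in it) yields a maximal such ideal $P$. By Lemma \ref{mxkp}, $P$ is prime, and by construction $I \preccurlyeq P$ and $x \notin P$ (since $x \in S$ and $P \wedge S = \emptyset$). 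Intersecting over all $x \notin I$ gives the intersection representation of $I$ by prime ideals containing $I$.

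The main technical hurdle is the construction in $(\ref{iksp}) \Rightarrow (\ref{iinp})$: one must take care that the multiplicatively closed set produced by Lemma \ref{mms} is indeed disjoint from $I$, which critically uses the semiprime hypothesis (otherwise iterated squares $x, x^2, x^4, \ldots$ could sneak into $I$ without $x$ itself being in $I$). Once this is secured, the combination with Lemmas \ref{mxkp} and \ref{rpms} furnishes the required prime ideal, and the rest of the equivalences fall out cleanly from Proposition \ref{edr} and Lemma \ref{radpr}.
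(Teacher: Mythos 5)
Your proposal is correct and follows essentially the same route as the paper: the hard implication rests on Lemma \ref{mms} combined with the maximal-ideal-disjoint-from-$S$ construction (Lemmas \ref{mxkp} and \ref{rkt}), while the remaining implications come from Proposition \ref{edr}. The only difference is that you run the cycle in the opposite direction, proving $(\ref{iksp})\Rightarrow(\ref{iinp})$ directly (inlining the Zorn's lemma argument of Lemma \ref{rkt}) where the paper proves $(\ref{iksp})\Rightarrow(\ref{iradi})$ and handles $(\ref{iinp})\Rightarrow(\ref{iksp})$ via the observation that an intersection of primes is semiprime; this is a cosmetic rearrangement.
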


	\begin{proof}
		From Proposition \ref{edr}, it follows that (3)$\Rightarrow$(2). Since the intersection of semiprime ideals is a semiprime ideal, (2)$\Rightarrow$(1) follows. What remains is to show that (1)$\Rightarrow$(3) and for that, it is sufficient to show  $\rd(I)\preccurlyeq I.$ Suppose that  $x\notin I$. Then $x\in \la\neg I$ and by Lemma \ref{mms}, there exists a multiplicatively closed subset $S$ of $\la$ such that $x\in S\preccurlyeq  \la\neg I.$ But $S\wedge  I=\emptyset$ and hence, by Lemma \ref{rkt}, $x\notin \rd(I).$
	\end{proof}

	\begin{corollary}
		If $I\in \id$, then $\rd(I)$ is the smallest semiprime ideal in $\la $ containing $I$.
	\end{corollary}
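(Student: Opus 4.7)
The plan is to derive the corollary as a direct consequence of Theorem \ref{spkr} together with the monotonicity and idempotence properties of the radical operator established in Lemma \ref{radpr}. The statement has two parts: that $\rd(I)$ is itself a semiprime ideal containing $I$, and that it is contained in every other semiprime ideal containing $I$.

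For the first part, I would invoke Lemma \ref{radpr}(\ref{irad}) to see that $\rd(I)$ is an ideal containing $I$, and Lemma \ref{radpr}(3) to note that $\rd(\rd(I)) = \rd(I)$, so $\rd(I)$ is a radical ideal. Then Theorem \ref{spkr}, specifically the implication (\ref{iradi})$\Rightarrow$(\ref{iksp}), gives that $\rd(I)$ is semiprime.

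For minimality, I would take any semiprime ideal $J$ with $I \preccurlyeq J$ and argue that $\rd(I) \preccurlyeq J$. By Theorem \ref{spkr} again, this time using (\ref{iksp})$\Rightarrow$(\ref{iradi}), the ideal $J$ satisfies $J = \rd(J)$. Applying Lemma \ref{radpr}(\ref{ijrirj}) (monotonicity of $\rd$) to $I \preccurlyeq J$ yields $\rd(I) \preccurlyeq \rd(J) = J$, which is the desired inclusion.

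I do not anticipate any real obstacle here, since the corollary is essentially a repackaging of the equivalence between semiprime and radical ideals established in Theorem \ref{spkr}; the only care needed is to cite the correct directions of that equivalence for the existence and minimality parts respectively.
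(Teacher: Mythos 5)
Your proof is correct and is exactly the argument the paper intends: the corollary is stated without proof as an immediate consequence of Theorem \ref{spkr}, and your use of Lemma \ref{radpr}(1), (2), (3) together with both directions of the equivalence between semiprime and radical ideals is the standard way to spell it out.
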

	
	Next we wish to consider saturation of a subset of a quantale and relate it to prime ideals.

	\begin{definition}
		Let $\la$ be a quantale and $S$ be a multiplicatively closed subset of $\la$. We say $S$ is \emph{saturated} if for $x$, $y\in \la$ and $x\pr y\in S$ implies that $x,$ $y\in S$. The \emph{saturation} of $S$ is defined as
		\[ \overline{S}=\{ x\in \la \mid \;\text{there exists}\;y\in \la\;\text{such that}\; x\pr y\in S\}.\]	
	\end{definition}
	
	The following proposition shows that $\overline{S}$ behaves as a closure operation. Moreover, it is related to prime ideals as expected and extends Exercise 7 (from Chapter 3) of \cite{AM69}.

	\begin{proposition}
		Let $\la$  be a quantale and $S$ be a multiplicatively closed subset of $\la$. Then the following holds.
		\begin{enumerate}
			\item $\overline{S}$ is the smallest saturated multiplicatively closed subset in $\la$ containing $S$.
			
			\item $S$ is saturated if and only if $\la\neg S$ is a join of prime ideals.
		\end{enumerate}
	\end{proposition}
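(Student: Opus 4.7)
Part (1) is a routine verification of four properties of $\overline{S}$. The containment $S \subseteq \overline{S}$ follows at once by taking the witness $y = \top$. Multiplicative closure of $\overline{S}$ comes from associativity and commutativity of $\pr$: if $x_i \pr y_i \in S$ for $i = 1, 2$, then $(x_1 \pr x_2) \pr (y_1 \pr y_2) = (x_1 \pr y_1) \pr (x_2 \pr y_2) \in S$ by multiplicative closure of $S$, witnessing $x_1 \pr x_2 \in \overline{S}$. Saturation of $\overline{S}$ is by the same reassociation: if $(x_1 \pr x_2) \pr y \in S$, then $x_1 \pr (x_2 \pr y) \in S$ places $x_1$ in $\overline{S}$, and symmetrically $x_2$. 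Minimality is forced by saturation of any competing container $T \supseteq S$: for $x \in \overline{S}$ with $x \pr y \in S \subseteq T$, saturation of $T$ drops $x$ into $T$.

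For part (2), the easy direction $(\Leftarrow)$ is a one-line contradiction. Suppose $\la \neg S = \bigcup_\lambda P_\lambda$ with each $P_\lambda$ a prime ideal, and let $x \pr y \in S$. If $x \notin S$, some $P_{\lambda_0}$ contains $x$; since $x \pr y \preccurlyeq x$ by Lemma \ref{bip}(\ref{mul}) and $P_{\lambda_0}$ is downward closed, we obtain $x \pr y \in P_{\lambda_0} \subseteq \la \neg S$, contradicting $x \pr y \in S$. A symmetric argument gives $y \in S$. Observe that primality is not actually used in this direction; what matters is that each $P_\lambda$ is an ideal.

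For the harder direction $(\Rightarrow)$, the plan is to produce, for each $x \in \la \neg S$, a prime ideal $P_x$ with $x \in P_x$ and $P_x \cap S = \emptyset$; then $\la \neg S = \bigcup_{x \notin S} P_x$ is the sought decomposition. To construct $P_x$, I would apply Zorn's lemma to $\Sigma_x = \{I \in \id : x \in I,\; I \cap S = \emptyset\}$ (ascending unions of chains remain in $\Sigma_x$) to obtain a maximal element, which Lemma \ref{mxkp} immediately identifies as prime. The natural candidate witnessing $\Sigma_x \neq \emptyset$ is the principal ideal $\langle x \rangle$, so everything reduces to verifying $\langle x \rangle \cap S = \emptyset$ whenever $x \notin S$.

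The main obstacle is exactly this last claim. Suppose $z \in \langle x \rangle \cap S$; then $z \in S$ and, by Definition \ref{igs} combined with distributivity, $z \preccurlyeq \mu \pr x$ for some $\mu \in \la$, while saturation rules out $\mu \pr x \in S$ (lest $x \in S$ immediately). One must therefore derive a contradiction from the simultaneous $z \in S$, $z \preccurlyeq \mu \pr x$, and $\mu \pr x \notin S$, which is tantamount to showing that $S$ is upward closed in $\la$. This upward-closure property is in fact \emph{necessary} for the conclusion, because a union of ideals is always downward closed, so its complement is upward closed; hence any complete proof of $(\Rightarrow)$ must extract upward closure of $S$ from saturation in the commutative unital quantale setting. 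I expect the decisive step to leverage the unital identity $\top \pr x = x$ together with Lemma \ref{bip}(\ref{mul}), perhaps by producing a decomposition of $\mu \pr x$ as a product whose image under the saturation axiom forces $\mu \pr x \in S$; once that is in hand, saturation delivers $x \in S$ and the proof closes.
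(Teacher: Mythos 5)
Your part (1) and the backward implication of part (2) are correct and essentially identical to the paper's argument (the paper likewise uses only that each $P_{\lambda}$ is an ideal in that direction, not primality). The problem is the forward implication of (2): you have correctly isolated the critical step --- showing $\langle x\rangle\wedge S=\emptyset$ for $x\notin S$, which is equivalent to showing that a saturated multiplicatively closed $S$ is upward closed --- but you do not prove it; you only record that you ``expect'' it to follow from the unit law. So the proposal is incomplete at exactly the one non-routine point. For what it is worth, the paper's own proof passes over the same point: it asserts that an element of $\langle a\rangle\wedge S$ forces some $x_i\pr a\in S$, which is precisely the unjustified jump from ``$z\in S$ and $z\preccurlyeq \mu\pr a$'' to ``$\mu\pr a\in S$.''

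Moreover, the ``decisive step'' you are hoping for does not exist in general: saturation does not imply upward closure in an arbitrary integral commutative quantale. Take $\la$ to be the quantale of monomial ideals of $k[s,t]$ ($k$ a field), ordered by inclusion, with ideal product and unit the whole ring, and let $S$ be the set of nonzero principal monomial ideals $(s^at^b)$. Then $S$ is multiplicatively closed and saturated (if a product of two monomial ideals is nonzero principal, it is invertible, so both factors are invertible, hence principal and generated by monomials), yet $(st)\in S$, $(st)\preccurlyeq (s,t)$, and $(s,t)\notin S$. Hence $\la\neg S$ is not downward closed and cannot be a join (or union) of ideals of $\la$, let alone of primes. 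The forward implication does hold under extra hypotheses --- in a frame, $x\preccurlyeq y$ gives $x=x\pr y$ and saturation yields $y\in S$; more generally it works whenever $x\preccurlyeq y$ implies $x=y\pr z$ for some $z$, which is what makes the ring-theoretic prototype (every element of $Rx$ is literally a multiple of $x$) go through. Your diagnosis that upward closure is necessary was exactly right; the correct conclusion is that it cannot be derived from saturation alone, so the step cannot be closed as stated.
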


	\begin{proof}
		(1) If $s\in S$, then, $\top\pr s=s\in \overline{S},$ and hence $S\preccurlyeq  \overline{S}.$ Suppose $x,$ $y\in \overline{S}$. Then there exist $y,$ $y'\in \la$ such that $x\pr x'\in S$ and $y\pr y'\in S$. Since $S$ is a multiplicatively closed set, \[(x\pr x')\pr (y\pr y')=(x\pr y)\pr (x'\pr y')\in S,\] which implies that $x\pr y\in \overline{S}.$ Therefore, $\overline{S}$ is a multiplicatively closed subset of $\la$. To show $\overline{S}$ is saturated, let $x\pr y\in \overline{S}$ for some $x,$ $y\in \la$. Then there exists $z\in \la$ such that $x\pr (y\pr z)=(x\pr y)\pr z\in S$. This implies $x\in \overline{S}.$ Similarly, $y\in \overline{S}.$ Finally, let $T$ be a saturated multiplicatively closed subset of $\la$ containing $S$. Suppose $s\in \overline{S}.$ Then there exists $y\in \la$ such that $x\pr y\in S\preccurlyeq  T$. Since $T$ is saturated, $x\in T$.
		
		(2) Suppose $\la\neg S=\bigvee_{ \lambda \in \Lambda}\{P_{\lambda}\mid P_{\lambda} \in \spc\}.$ Let $x\pr y\in S$ for some $x,$ $y\in \la$ and if possible, let $x\notin S$. This implies $x\preccurlyeq p_{\lambda}$ for some $p_{\lambda}\in P_{\lambda}$. Since $P_{\lambda}$ is an ideal and $x\pr y\preccurlyeq x\wedge y\preccurlyeq x \preccurlyeq p_{\lambda} \in P_{\lambda},$ we must have $x\pr y \in P_{\lambda}.$ This implies $x\pr y\notin S$, a contradiction. This proves that $S$ is saturated.
		Conversely, let $S$ be a saturated subset of $\la$. Let $a\in \la\neg S.$ It suffices to show that $a\in P_{\lambda}$ for some $P_{\lambda}\in \spc$ such that $P_{\lambda}\wedge S=\emptyset.$ Suppose \[\Omega=\{ I\in \id\mid a\in I\;\text{and}\; I\wedge S=\emptyset\}.\] Observe that $\langle a \rangle \wedge S = \emptyset$. Indeed, $x_i\pr a\preccurlyeq \bigvee_{i=1}^n x_i\pr a$ for $x_i\in \la,$  implies $x_i\pr a \in S$ and since $S$ is saturated; this implies $a \in S$, contradiction. Hence $\langle a \rangle\in \Omega$, and thus $\Omega\neq \emptyset.$ By Zorn's lemma,
		$\Omega$ has a maximal element, say $P$. We claim that $P$
		is a prime ideal, and  proof of this is already described in  Proposition \ref{edr}.
	\end{proof}
	
	\subsection{Primary ideals and primary decompositions}\label{ppd}

	The decomposition of an ideal into primary ideals holds a significant place in the ideal theory of rings and is considered a traditional cornerstone. The objective of this section is to establish several classical uniqueness theorems for ideals for quantales and hence extend the corresponding results of \cite{AM69}.

	\begin{definition}
		A proper ideal $P$ in a quantale $\la$ is called \emph{primary} if for $x,$ $y\in \la$ and $x\pr y\in I$ imply that $x\in I$ or $y^n\in I$ for some  $n\in \mathds{N}$. If $P'$ is a primary ideal in $\la$ and $\rd(P')=P$, then we say $P'$ is \emph{$P$-primary}.	
	\end{definition}

	\begin{proposition}
		Let $\la$ be a quantale. Then the following holds.
		\begin{enumerate}
			
			\item Every prime ideal in $\la$ is primary.
			
			\item Let $P$ be a primary ideal in $\la$. Then $\rd(P)$ is the smallest prime ideal in $\la$ containing $P$. 
		\end{enumerate}
	\end{proposition}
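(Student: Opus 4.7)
The plan is to handle the two parts separately, with part (2) being a short computation once part (1) is in hand and Proposition \ref{edr} is invoked.

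For part (1), I would simply unwind the definitions. Suppose $P$ is prime and $x\pr y\in P$. By primeness, either $x\in P$ or $y\in P$; in the latter case we obtain $y^n\in P$ by taking $n=1$. This is exactly the primary condition, so $P$ is primary. This step is essentially trivial.

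For part (2), the core observation is that once we show $\rd(P)$ is itself a prime ideal, Proposition \ref{edr} immediately forces $\rd(P)$ to be the smallest prime ideal containing $P$, because that proposition describes $\rd(P)$ as the meet of all prime ideals containing $P$. So the work reduces to verifying primeness of $\rd(P)$. Suppose $x\pr y\in \rd(P)$; then by definition of the radical there exists $n\in\mathds{N}$ with $(x\pr y)^n=x^n\pr y^n\in P$ (using commutativity and associativity from Definition \ref{mld}). Applying the primary property of $P$ to the product $x^n\pr y^n$, either $x^n\in P$, in which case $x\in\rd(P)$ directly, or $(y^n)^m\in P$ for some $m\in\mathds{N}$, i.e.\ $y^{nm}\in P$, in which case $y\in\rd(P)$. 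For properness, $P\neq\la$ forces $\rd(P)\neq\la$ by Lemma \ref{radpr}(7). Hence $\rd(P)\in\spc$.

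There is no real obstacle here: both parts are immediate consequences of the definitions together with the characterisation of the radical in Proposition \ref{edr}. The only point requiring a moment of care is the manipulation $(x\pr y)^n=x^n\pr y^n$, which relies on commutativity (Definition \ref{mld}(\ref{mlc})) — in a noncommutative quantale this proof would break and one would need a more subtle argument.
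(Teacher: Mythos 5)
Your proposal is correct and follows essentially the same route as the paper: part (1) by taking $n=1$, and part (2) by showing $\rd(P)$ is prime via $(x\pr y)^n=x^n\pr y^n$ and the primary condition, then invoking Proposition \ref{edr}. Your explicit attention to properness of $\rd(P)$ (via Lemma \ref{radpr}) is a small point the paper's proof leaves implicit.
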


	\begin{proof}
		(1) Suppose $P$ is a prime ideal in $\la$ and $x\pr y\in P$ for some $x, $ $y\in \la$. Then $x\in P$ or $y=y^1\in P,$
		
		(2) By Proposition \ref{edr}, it suffices to show that $\rd(P)$ is a prime ideal. Suppose $x\pr y\in \rd(P)$ for some $x, $ $y\in \la$. Then $(x\pr y)^n\in P$ for some $n\in \mathds{N}$. This implies that $x^n\in P$ or $y^{mn}\in P$ for some $m\in \mathds{N}.$ Hence, $x\in \rd(P)$ or $y\in \rd(P).$
	\end{proof}

	When considering a finite intersection of primary ideals in a quantale, with the condition that all the ideals involved are $P$-primary for a given prime ideal $P$, the resulting intersection is indeed primary. To establish this claim, we first require a preliminary result.

	\begin{lemma}\label{plpd}
		If $I_1,\ldots, I_n$ are ideals in a quantale $\la$ and $I=\bigwedge_{i=1}^n I_i,$ then $\rd(I)=\bigwedge_{i=1}^n \rd(I_i).$
	\end{lemma}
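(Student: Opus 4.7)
The plan is to prove this by a straightforward induction on $n$, using Lemma \ref{radpr}(\ref{ijicj}) (the binary case $\rd(I\wedge J)=\rd(I)\wedge \rd(J)$) as the base step. For $n=1$ the statement is vacuous, and $n=2$ is precisely Lemma \ref{radpr}(\ref{ijicj}). For the inductive step, I would write
\[
\bigwedge_{i=1}^{n+1} I_i \;=\; \Bigl(\bigwedge_{i=1}^{n} I_i\Bigr)\wedge I_{n+1},
\]
apply the binary identity to obtain $\rd\bigl(\bigwedge_{i=1}^{n+1} I_i\bigr)=\rd\bigl(\bigwedge_{i=1}^{n} I_i\bigr)\wedge \rd(I_{n+1})$, and then invoke the induction hypothesis on the first factor.

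As an alternative, a direct proof is also quite short and avoids invoking the binary case twice. One inclusion, $\rd(I)\preccurlyeq \bigwedge_{i=1}^n \rd(I_i)$, follows immediately from Lemma \ref{radpr}(\ref{ijrirj}) applied to $I\preccurlyeq I_i$ for each $i$. For the reverse inclusion, suppose $x\in \bigwedge_{i=1}^n \rd(I_i)$, so for each $i$ there is some $n_i\in \mathds{N}$ with $x^{n_i}\in I_i$. Setting $N=\max\{n_1,\ldots,n_n\}$ and using Lemma \ref{bip}(\ref{mul}) iteratively to conclude $x^{N}\preccurlyeq x^{n_i}$ for every $i$, the fact that each $I_i$ is a downward-closed ideal gives $x^N\in I_i$ for all $i$, hence $x^N\in \bigwedge_{i=1}^n I_i=I$, and thus $x\in\rd(I)$.

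No step presents a real obstacle; the only subtlety is the observation that different $n_i$'s can be uniformized into a single exponent $N$, which rests entirely on the monotonicity property $x^{k+1}\preccurlyeq x^k$ that is already furnished by Lemma \ref{bip}. Given how lightweight the argument is, I would likely present the inductive version, since it cleanly reuses an already-proved ingredient and matches the style of the other proofs in the section.
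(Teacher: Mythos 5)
Your proposal is correct, and in fact your ``alternative'' direct argument is essentially word-for-word the proof the paper gives: one inclusion from monotonicity of $\rd$ (the paper phrases it as $x^n\in I$ implies $x^n\in I_i$ for all $i$), and the reverse inclusion by taking $m=\max_i\{m_i\}$ to uniformize the exponents. The inductive version you say you would actually present is a legitimate minor variant: it trades the explicit $\max$ argument for repeated appeals to the binary identity $\rd(I\wedge J)=\rd(I)\wedge\rd(J)$ of Lemma \ref{radpr}(\ref{ijicj}), which is indeed already available at this point in the paper. The only thing the induction buys is not repeating the uniformization trick (which is, however, buried inside the proof of the binary case anyway); the direct proof buys self-containedness. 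Either write-up would be acceptable, and both are complete.
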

	
	\begin{proof}
		Suppose $x\in \rd(I)$. Then $x^n\in I$ for some $n\in \mathds{N}$. This implies that $x^n\in I_i$ for all $i\in \{1, \ldots, n\},$ and hence $x\in \rd(I_i)$ for all $i\in \{1, \ldots, n\},$ \textit{i.e.}, $x\in \bigwedge_{i=1}^n \rd(I_i).$ Now, to obtain the other inclusion, let $z\in \bigwedge_{i=1}^n \rd(I_i).$ This implies that $z^{m_i}\in \rd(I_i),$  where $m_i\in \mathds{N},$ for all $i\in \{1, \ldots, n\}.$ Choose $m=\max_{1\preccurlyeq i \preccurlyeq n}\{m_i\}$. Then $z^m\in I_i$ for all $i\in \{1, \ldots, n\},$ and hence $z\in \rd(I).$
	\end{proof}

	\begin{theorem}\label{piqp}
		If $P$ is a prime ideal in a quantale $\la$ and if $P_1,\ldots, P_n$ are $P$-primary ideals in $\la$, then $P'=\bigwedge_{i=1}^n P_i$ is also a $P$-primary ideal in $\la$.
	\end{theorem}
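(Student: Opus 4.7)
The plan is to verify directly the two defining conditions of a $P$-primary ideal: that $\rd(P')=P$, and that $P'$ itself is primary.

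For the radical computation, I would apply Lemma \ref{plpd} to the finite meet $P' = \bigwedge_{i=1}^n P_i$ and obtain
\[
\rd(P') \;=\; \rd\!\left(\bigwedge_{i=1}^n P_i\right) \;=\; \bigwedge_{i=1}^n \rd(P_i) \;=\; \bigwedge_{i=1}^n P \;=\; P,
\]
using the hypothesis that each $P_i$ is $P$-primary. So half the work is free.

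For primariness, I would take $x,y\in \la$ with $x\pr y\in P'$ and $x\notin P'$, and aim to produce $n\in \mathds{N}$ with $y^n\in P'$. From $x\pr y\in P'$ we get $x\pr y\in P_i$ for every $i$, while $x\notin P'$ gives some index $j$ with $x\notin P_j$. Since $P_j$ is primary, $y^{n_j}\in P_j$ for some $n_j$, whence $y\in \rd(P_j)=P$. But $P=\rd(P_i)$ for every $i$, so for each $i$ there exists $m_i$ with $y^{m_i}\in P_i$. Set $m=\max_i m_i$; then $y^m = y^{m_i}\pr y^{m-m_i}$ lies in $P_i$ for every $i$ by Remark \ref{iax} (or equivalently Lemma \ref{bip}(\ref{mul})), and hence $y^m\in \bigwedge_{i=1}^n P_i = P'$.

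There is no real obstacle here; the statement reduces to assembling Lemma \ref{plpd} with the straightforward ``meet of $P$-primary is still $P$-primary'' argument from the ring-theoretic template. The only mildly subtle point is the final step of promoting each $y^{m_i}\in P_i$ to a common power $y^m$; this uses downward-closure of ideals together with $y^m\preccurlyeq y^{m_i}$ whenever $m\geqslant m_i$, which itself follows from Lemma \ref{bip}(\ref{mul}) applied repeatedly.
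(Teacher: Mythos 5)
Your proposal is correct and follows essentially the same route as the paper: Lemma \ref{plpd} gives $\rd(P')=P$, and the primariness argument is identical, except that where the paper concludes directly from $y\in P=\rd(P')$ that some power $y^n$ lies in $P'$, you re-derive this by taking the maximum of the exponents $m_i$ --- which is just the reverse inclusion of Lemma \ref{plpd} unwound. No gap; the two arguments are the same in substance.
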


	\begin{proof}
		By Lemma \ref{radpr}(\ref{ijicj}) and Lemma \ref{plpd}, we have 
		\[\rd(P')=\rd\left(\bigwedge_{i=1}^n P_i \right)=\bigwedge_{i=1}^n\rd(P_i)=\bigwedge_{i=1}^nP=P.\]
		What remains for us to prove is that $P'$ is a primary ideal. Let $x\pr y\in P'$. Then $x\pr y\in P_i$ for all $1\preccurlyeq i\preccurlyeq n$. If $x\notin P_j$ for some $j$, then $y^m\in P_j$ for some $m\in \mathds{N}$, because $P_j$ is a primary ideal. It follows that $y\in \rd(P_j)=P$. Since $\rd(P')=P$, there exists $n\in \mathds{N}$ such that $y^n\in P'$.
	\end{proof}

	\begin{proposition}\label{pqx}
		Suppose $P$ is a prime ideal in a quantale $\la$, $P'$ is a $P$-primary ideal, and $x$ is an element in $\la$. Then the following hold.
		\begin{enumerate}
			\item\label{fiq} If $x\in P'$ then $(P':x)=\la$.
			
			\item\label{siq} If $x\notin P'$, then $(P':x)$ is a $P$-primary ideal.
			
			\item If $x\notin P$, then $(P':x)=P'$.
		\end{enumerate}
	\end{proposition}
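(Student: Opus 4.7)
The notation $(P':x)$ here should be read as the residual of $P'$ by the principal ideal $\langle x\rangle$, i.e., $(P':x)=\{y\in\la\mid y\pr x\in P'\}$; that this is an ideal follows from the general residual lemma established earlier in the section. The plan is to mimic the classical ring-theoretic argument (as in Atiyah--Macdonald, Proposition~4.4), exploiting the following uniform observation: whenever $a\pr b\in P'$, the primary property together with commutativity of $\pr$ gives either $a\in P'$ or $b^n\in P'$ for some $n$, equivalently $b\in P'$ or $a^n\in P'$. Each of the three parts will be handled by choosing the direction that best matches the hypothesis on $x$.

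For part (\ref{fiq}), if $x\in P'$ then Lemma~\ref{bip}(\ref{mul}) gives $y\pr x\preccurlyeq x$ for every $y\in\la$, and downward closure of the ideal $P'$ forces $y\pr x\in P'$. Hence every $y\in\la$ lies in $(P':x)$, so $(P':x)=\la$.

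For part (\ref{siq}), assuming $x\notin P'$, I first establish the chain $P'\preccurlyeq(P':x)\preccurlyeq P$. The left inclusion repeats the argument of part (1); for the right, if $y\in(P':x)$ then $y\pr x\in P'$, and since $x\notin P'$ the primary property (in the symmetric form) yields $y^n\in P'$ for some $n$, so $y\in\rd(P')=P$. Taking radicals in this chain and using $\rd(P)=P$ (immediate from Proposition~\ref{edr}, since prime ideals are radical) gives $\rd((P':x))=P$. To verify that $(P':x)$ is itself primary, take $y\pr z\in(P':x)$, so $(y\pr x)\pr z\in P'$; primarity of $P'$ gives either $y\pr x\in P'$, i.e., $y\in(P':x)$, or $z^n\in P'$ for some $n$, and in the latter case $z^n\pr x\preccurlyeq z^n$ combined with downward closure of $P'$ puts $z^n\in(P':x)$.

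For part (3), the inclusion $P'\preccurlyeq(P':x)$ is routine; conversely, if $y\in(P':x)$ then $y\pr x\in P'$ gives $y\in P'$ or $x^n\in P'$, and the second alternative would imply $x\in\rd(P')=P$, contradicting $x\notin P$, so $y\in P'$. No substantial obstacle is expected; the only point requiring care is choosing the correct direction of the primary implication in each part so as to exploit whichever hypothesis on $x$ is available, together with consistent use of the elementary bound $y\pr x\preccurlyeq x$ from Lemma~\ref{bip}(\ref{mul}) to move elements into and out of $P'$.
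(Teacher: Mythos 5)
Your proposal is correct and follows essentially the same route as the paper: part (1) via downward closure, part (2) by establishing $P'\preccurlyeq(P':x)\preccurlyeq P$, taking radicals, and then applying the primary property of $P'$ to $(a\pr b)\pr x=(a\pr x)\pr b$, and part (3) by ruling out $x^n\in P'$ using $x\notin P=\rd(P')$. The only cosmetic differences are that you verify primarity of $(P':x)$ by a direct case split where the paper argues by contradiction, and you leave properness of $(P':x)$ implicit in the inclusion $(P':x)\preccurlyeq P$ rather than noting $\top\notin(P':x)$ explicitly.
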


	\begin{proof}
		(1) If $x\in P'$, then $x\pr 1=x\in P'$, and hence $1\in (P':x),$ implying that $(P':x)=\la$.
		
		(2) Let $x\notin P'$. If $y\in (P':x)$, then $x\pr y\in P'$. Since $P'$ is a primary ideal and $x\notin P'$, there exists $n\in \mathds{N}$ such that $y^n\in P'$. Hence $y\in \rd(P')=P$. Therefore, we have $P'\preccurlyeq (P':x)\preccurlyeq P$, implying that 
		\[P=\rd(P')\preccurlyeq \rd(P':x)\preccurlyeq \rd(P)=P.\]
		Hence $\rd(P':x)=P$. It remains for us to show that $(P':x)$ is a primary ideal. Since $x\notin P'$, we have $\top\notin (P':x)$, so $(P':x)\neq \la$, \textit{i.e.} is a proper ideal in $\la$. Let $a\pr b\in (P':x)$. If $b^m\notin P'$ for all $m\in \mathds{N}$, then $b\notin \rd(P':x)=P$. However, $a\pr b\pr x\in P'$ implies that $a\pr x\in P'$ or $b^k\in P'$ for some $k\in \mathds{N}$, because $P'$ is a primary ideal. If $b^k\in P'$, then $b\in \rd(P')=P,$ a contradiction. Therefore, $a\pr x\in P',$ implying that $a\in (P':x)$whence $(P':x)$ is a primary ideal. 
		
		(3) Let $x\notin P$. If $y\notin P'$ and $x\pr y\in P'$, then $x^n\in P'$ for some $n\in \mathds{N}$, because $P'$ is a primary ideal. Therefore $x\in \rd(P')=P$, a contradiction. Hence $x\pr y\notin P'$. This implies that $y\notin (P':x)$. By Proposition \ref{bpi}(\ref{iicj}), $P'\preccurlyeq (P:x),$ and this we have $P'=(P':x).$	
	\end{proof}

	\begin{definition}\label{pdd}
		A \emph{primary decomposition} of an ideal $I$ in a quantale $\la$ is an expression:
		\begin{equation}\label{prd}
			I=\bigwedge_{i=1}^n P_i,
		\end{equation}
		where $P_i$ are primary ideals in $\la$. The expression (\ref{prd}) is said to be \emph{minimal} if
		\begin{enumerate}
			\item\label{radd} $\rd(P_1), \ldots, \rd(P_n)$ are distinct.
			
			\item\label{mpj} $\bigwedge_{j\neq i}P_j\nsubseteq P_i,$ for all $i.$ 
		\end{enumerate}
		If an ideal $I$ in $\la$ has a primary decomposition, then we say that $I$ is \emph{decomposable}.	
	\end{definition}

	\begin{proposition}
		A primary decomposition may be replaced by a minimal primary decomposition.
	\end{proposition}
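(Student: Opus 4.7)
The plan is to start from an arbitrary primary decomposition $I = \bigwedge_{i=1}^n P_i$ and perform two successive cleanup steps: first, collapse components with the same radical into a single primary component, and second, iteratively discard any component that is superfluous. The first step secures condition (\ref{radd}) of Definition \ref{pdd}, the second secures (\ref{mpj}), and what makes the argument go through cleanly is that the second step cannot undo what the first accomplishes.

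For step one, group the indices by the equivalence relation $i \sim j$ iff $\rd(P_i) = \rd(P_j)$. Within each equivalence class with common radical $P$, the constituent primary ideals are all $P$-primary, so by Theorem \ref{piqp} their meet is again $P$-primary. Replacing each class by its meet produces a primary decomposition of $I$ whose components have pairwise distinct radicals, giving condition (\ref{radd}).

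For step two, start from a primary decomposition $I = \bigwedge_{\alpha \in A} P_\alpha'$ already satisfying (\ref{radd}). Call an index $\alpha$ \emph{redundant} if $\bigwedge_{\beta \neq \alpha} P_\beta' \preccurlyeq P_\alpha'$; in that case the inclusions $\bigwedge_\beta P_\beta' \preccurlyeq \bigwedge_{\beta \neq \alpha} P_\beta' \preccurlyeq P_\alpha'$ combine with $\bigwedge_{\beta \neq \alpha} P_\beta' \preccurlyeq P_\beta'$ for $\beta \neq \alpha$ to force $\bigwedge_{\beta \neq \alpha} P_\beta' = \bigwedge_\beta P_\beta' = I$, so $P_\alpha'$ may be dropped from the decomposition without changing the intersection. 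Iterate, dropping one redundant component at a time. Since $A$ is finite and each iteration strictly decreases $|A|$, the procedure terminates, and at termination no index is redundant, yielding (\ref{mpj}). Throughout, the set of radicals can only shrink, so the distinctness established in step one is automatically preserved.

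The argument is essentially bookkeeping; the only substantive ingredient is Theorem \ref{piqp}, whose role is precisely to guarantee that the combining operation of step one remains inside the class of primary ideals. With that in hand there is no genuine obstacle, and the two-step procedure produces the required minimal primary decomposition.
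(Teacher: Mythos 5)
Your proposal is correct and follows essentially the same route as the paper: first merge components sharing a radical via Theorem \ref{piqp} to secure condition (\ref{radd}), then iteratively delete superfluous components to secure condition (\ref{mpj}). You simply spell out in more detail the termination of the deletion step and the fact that it preserves distinctness of the radicals, which the paper leaves implicit.
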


	\begin{proof}
		Consider a primary decomposition $I=\bigwedge_{i=1}^n P_i,$ where $P_i$ are primary ideal in $\la$. If \[\rd(P_{i_1})=\cdots =\rd(P_{i_k})=P,\] then by Theorem \ref{piqp}, $P'=\bigwedge_{j=1}^k P_{i_j}$ is a $P$-primary ideal. Therefore, we can replace $P_{i_1}, \ldots, P_{i_k}$ by $P'$, and continue the process to guarantee that condition (\ref{radd}) of Definition \ref{pdd} holds.	If condition (\ref{mpj}) of Definition \ref{pdd} does not hold, then we can eliminate ideals until it does hold, without changing the overall intersection.
	\end{proof}

	While primary decompositions, or minimal primary decompositions, may not be unique, they possess certain uniqueness properties, which are outlined in the following theorem.

	\begin{theorem}
		Let $I$ be a decomposable ideal in a quantale and $I=\bigwedge_{i=1}P_i$, a minimal primary decomposition. Let $P_i'=\rd(P_i),$ for $i=1, \ldots, n$. Then the set $\{P_1', \ldots, P_n'\}$ is composed of the prime ideals $P$ of $\la$ such that $P=\rd((I:x)),$ for some $x\in \la$.
	\end{theorem}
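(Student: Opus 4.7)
The plan is to compute $\rd((I:x))$ explicitly using the decomposition and then derive the set equality in two directions.

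First I would apply Proposition \ref{bpi}(\ref{inqi}) to the minimal decomposition to get
\[
(I:x) \;=\; \left(\bigwedge_{i=1}^n P_i : x\right) \;=\; \bigwedge_{i=1}^n (P_i:x),
\]
and then Lemma \ref{plpd} to push the radical inside:
\[
\rd((I:x)) \;=\; \bigwedge_{i=1}^n \rd((P_i:x)).
\]
Next, for each factor I would invoke Proposition \ref{pqx}: if $x\in P_i$ then $(P_i:x)=\la$, while if $x\notin P_i$ then $(P_i:x)$ is $P_i'$-primary and hence $\rd((P_i:x))=P_i'$. Omitting the trivial factors $\la$, this yields the key formula
\[
\rd((I:x)) \;=\; \bigwedge_{\,i\,:\,x\notin P_i} P_i'.
\]

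For the forward inclusion, I would fix $i$ and use the minimality condition \ref{pdd}(\ref{mpj}): since $\bigwedge_{j\neq i}P_j\nsubseteq P_i$, there exists $x\in\bigwedge_{j\neq i}P_j$ with $x\notin P_i$. Then in the formula above the only surviving factor is the $i$-th, so $\rd((I:x))=P_i'$, which is prime (being the radical of a primary ideal). Hence every $P_i'$ is realised.

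For the reverse inclusion, I would suppose $P=\rd((I:x))$ is a prime ideal, so
$P=\bigwedge_{i\in J}P_i'$ where $J=\{i : x\notin P_i\}$. Using Proposition \ref{bpi}(\ref{mip}) iteratively, the ideal product $\prod_{i\in J}P_i'$ is contained in $\bigwedge_{i\in J}P_i'=P$; since $P$ is prime, Proposition \ref{lpsp}(\ref{kpp}) (applied inductively on $|J|$) forces $P_i'\preccurlyeq P$ for some $i\in J$. Combined with $P\preccurlyeq P_i'$ (which is immediate from the intersection formula), this gives $P=P_i'$, placing $P$ in the stated set.

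The main obstacle, and the step requiring the most care, is the reverse inclusion: it is crucial to translate the meet $\bigwedge_{i\in J}P_i'\preccurlyeq P$ into an ideal-product inclusion so that Proposition \ref{lpsp}(\ref{kpp}) applies, and this rests on the inequality $I\pr J\preccurlyeq I\wedge J$ for quantale ideals rather than on any element-level argument. Once this quantale-theoretic analogue of the classical ring argument is in place, the rest is a clean bookkeeping of the cases $x\in P_i$ versus $x\notin P_i$.
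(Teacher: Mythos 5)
Your proposal is correct and follows essentially the same route as the paper: compute $\rd((I:x))=\bigwedge_{i:\,x\notin P_i}P_i'$ via Proposition \ref{bpi}(\ref{inqi}), Lemma \ref{plpd}, and Proposition \ref{pqx}, then use minimality to realise each $P_i'$ and primeness of a finite intersection to show nothing else occurs. The only difference is cosmetic: you spell out (via the product inclusion and Proposition \ref{lpsp}(\ref{kpp})) the fact that a prime finite intersection equals one of its factors, which the paper simply asserts.
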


	\begin{proof}
		Suppose $x\in \la$. By Proposition \ref{bpi}(\ref{inqi}) and Lemma \ref{plpd}, we have
		\[\rd((I:x))=\rd\left(\left( \bigwedge_{i=1}^n P_i:x \right)\right)=\rd\left(  \bigwedge_{i=1}^n (P_i:x)\right)=\bigwedge_{i=1}^n\rd((P_i:x)). \]	
		From (\ref{fiq}) and (\ref{siq}) of Proposition \ref{pqx}, we obtain		
		\[\rd((I:x))=\bigwedge_{i=1}^n\rd((P_i:x))=\bigwedge_{i, x\notin P_i}P_i'.\]
		If the intersection of a finite set of ideals is a prime ideal, then the intersection is equal to one of
		the ideals; thus, if $\rd(I:x)$ is a prime ideal, then
		\[\rd((I:x))\preccurlyeq \{P_i'\mid x\notin P_i\}\preccurlyeq \{P_1', \ldots, P_n'\}.\]
		For the converse, let $i\in \{1, \ldots, n\}.$ Because the primary decomposition is
		minimal, for each $i$, there exists $x_i\in \left(\bigwedge_{j\neq i} P_j \right)\neg P_i.$ If $y\in (P_i:x_i),$ then $y\pr x_i\in P_i$. Therefore,
		\[y\pr x_i\in P_i\wedge \left( \bigwedge_{j\neq i}P_j \right)=I, \]
		which implies that $y\in (I:x_i).$ Hence
		\[(P_i:x_i)\preccurlyeq (I:x_i)\preccurlyeq (P_i':x_i), \]
		where the second inclusion follows from the fact that $I\preccurlyeq  P_i$. Therefore, $(P_i:x_i)=(I:x_i).$ By Proposition \ref{pqx}(\ref{siq}), we have
		\[\rd((I:x_i))=\rd((P_i:x_i))=P_i'.\]
		From this it follows that $P_i'$ form the set of those ideals $\rd((I:x))$ that are prime ideals in $\la$.
	\end{proof}

	Suppose $I$ is a decomposable ideal and $I=\bigwedge_{i=1}^nP_i$, a minimal primary decomposition. Let us denote $\rd(P_i)=P_i'$. We say that the prime ideals $P_1',\ldots, P_n'$ \emph{belong to} $I$. The
	minimal elements of the set $S = \{P_1',\ldots, P_n'\}$ with respect to inclusion are said to be \emph{isolated} prime ideals belonging to $I$,
	and the others are called \emph{embedded} prime ideals. Define \[I^{\uparrow_{\spc}}=\{P\in \spc\mid P\supseteq I\}.\] We shall show that the minimal elements of the set S are the minimal elements of
	the set $I^{\uparrow_{\spc}}$. To obtain that, we need the following result.

	\begin{proposition}\label{idqa}
		Let $I$ be a decomposable ideal in a quantale $\la$, $I = \bigwedge_{i=1}^n P_i$ a minimal primary
		decomposition, with $S$ the set of prime ideals belonging to $I$ . If $P\in I^{\uparrow_{\spc}}$, then $P$ contains an
		isolated prime ideal $P_j'$.
	\end{proposition}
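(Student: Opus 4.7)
The plan is to radicalize the decomposition, invoke primality of $P$ at the level of ideals, and then refine to a minimal member of $S$. First I would apply Lemma \ref{plpd} to the minimal primary decomposition to obtain $\rd(I) = \bigwedge_{i=1}^n \rd(P_i) = \bigwedge_{i=1}^n P_i'$. Since $I \preccurlyeq P$ and $P$ is prime, hence semiprime, Theorem \ref{spkr} tells us $P$ is radical, so Lemma \ref{radpr}(\ref{ijrirj}) yields $\bigwedge_{i=1}^n P_i' = \rd(I) \preccurlyeq \rd(P) = P$.

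Next I would upgrade this inclusion to a statement about a single $P_j'$. Iterating Proposition \ref{bpi}(\ref{mip}) gives $P_1' \pr P_2' \pr \cdots \pr P_n' \preccurlyeq \bigwedge_{i=1}^n P_i' \preccurlyeq P$, so it suffices to show that primality distributes over such a finite product. A short induction on $n$ using the ideal-theoretic primality criterion Proposition \ref{lpsp}(\ref{kpp}) does the job: the base $n=1$ is vacuous, and for $n \geqslant 2$ one factors the product as $P_1' \pr (P_2' \pr \cdots \pr P_n')$ and applies primality of $P$ to conclude that either $P_1' \preccurlyeq P$, in which case we are done, or $P_2' \pr \cdots \pr P_n' \preccurlyeq P$, in which case the inductive hypothesis produces some $P_j' \preccurlyeq P$ with $j \in \{2, \ldots, n\}$. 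This inductive bookkeeping is the only step that demands any care, but it is routine once the base and splitting are set up.

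Finally, to obtain an \emph{isolated} prime contained in $P$, I would consider the subcollection $\{P_k' \in S \mid P_k' \preccurlyeq P\}$, which is nonempty by the previous step and finite since $S$ is finite, and select a minimal element $P_{j_0}'$ of it with respect to $\preccurlyeq$. I claim $P_{j_0}'$ is minimal in $S$ itself: any $P_k' \in S$ with $P_k' \preccurlyeq P_{j_0}'$ would satisfy $P_k' \preccurlyeq P_{j_0}' \preccurlyeq P$ and therefore lie in the selected subcollection, forcing $P_k' = P_{j_0}'$ by the choice of $P_{j_0}'$. Hence $P_{j_0}'$ is an isolated prime ideal belonging to $I$ and $P_{j_0}' \preccurlyeq P$, which is exactly the conclusion of the proposition.
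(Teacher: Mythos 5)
Your proof is correct and follows essentially the same route as the paper: radicalize via Lemma \ref{plpd}, deduce $\bigwedge_{i=1}^n P_i' \preccurlyeq \rd(P) = P$, conclude $P_j' \preccurlyeq P$ for some $j$ because a prime containing a finite intersection contains one of its members, and then pass to a minimal such member. You simply supply two details the paper leaves implicit, namely the product-plus-induction justification of the ``prime contains an intersection implies it contains a factor'' step and the explicit refinement to an isolated prime; both are correct.
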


	\begin{proof}
		Set $P_i'=\rd(P_i),$ for $i\in \{1, \ldots, n\}.$ Applying Lemma \ref{plpd}, we obtain
		\[\bigwedge_{i=1}^nP_i'=\bigwedge_{i=1}^n\rd(P_i)=\rd(I)\preccurlyeq \rd(P)=P .\]
		However, if a prime ideal contains an intersection of ideals, then
		at least one of the ideals in the intersection is contained in the prime ideal; therefore $P_j'\preccurlyeq P$, for
		some $j$. The result now follows.
	\end{proof}

	\begin{corollary}
		The isolated prime ideals are the minimal elements in $I^{\uparrow_{\spc}}.$
	\end{corollary}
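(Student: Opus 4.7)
The plan is to exploit Proposition \ref{idqa} in both directions, using that each $P_i' = \rd(P_i)$ itself lies in $I^{\uparrow_{\spc}}$ (since $I \preccurlyeq P_i \preccurlyeq \rd(P_i) = P_i'$ and $P_i'$ is prime by a previous result). The argument then splits into two inclusions.

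First, I would show that every isolated prime ideal $P_j'$ is minimal in $I^{\uparrow_{\spc}}$. Suppose $P \in I^{\uparrow_{\spc}}$ with $P \preccurlyeq P_j'$. By Proposition \ref{idqa}, $P$ contains some isolated prime $P_k'$, so $P_k' \preccurlyeq P \preccurlyeq P_j'$. Since both $P_k'$ and $P_j'$ are isolated, i.e.\ minimal elements of $S$, the containment $P_k' \preccurlyeq P_j'$ forces $P_k' = P_j'$, and consequently $P = P_j'$.

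Conversely, I would show that any minimal element $P$ of $I^{\uparrow_{\spc}}$ is isolated. By Proposition \ref{idqa}, there is an isolated prime $P_j'$ with $P_j' \preccurlyeq P$. But $P_j' \in I^{\uparrow_{\spc}}$, and $P$ is minimal in $I^{\uparrow_{\spc}}$, so $P = P_j'$, which is isolated. Combining both directions yields the claim.

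There is no serious obstacle here: the entire argument is a short two-sided use of Proposition \ref{idqa} together with the definition of isolated prime ideals as the minimal elements of $S$. The only point worth noting carefully is that each $P_i'$ genuinely belongs to $I^{\uparrow_{\spc}}$, which is immediate from $I \preccurlyeq P_i$ and Lemma \ref{radpr}(\ref{ijrirj}), so that the minimality comparisons in $S$ and in $I^{\uparrow_{\spc}}$ can be transferred to each other.
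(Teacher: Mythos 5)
Your proof is correct and follows essentially the same route as the paper: both directions rest on Proposition \ref{idqa} together with the fact that $S\preccurlyeq I^{\uparrow_{\spc}}$. You are in fact slightly more complete than the paper, which only spells out that each isolated prime is minimal in $I^{\uparrow_{\spc}}$ and leaves the converse inclusion (that every minimal element of $I^{\uparrow_{\spc}}$ is one of the isolated primes) implicit, whereas you supply it explicitly via minimality of $P$.
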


	\begin{proof}
		In Proposition \ref{idqa}, we have observed that  $S \preccurlyeq I^{\uparrow_{\spc}}$, so a fortiori, if $P_j'$ is an isolated prime in $S$, then $P_j\in I^{\uparrow_{\spc}}$.
		Suppose now that $P\in I^{\uparrow_{\spc}}$, with $P\preccurlyeq   P_j'$. From Proposition \ref{idqa}, it follows that there exists an isolated prime ideal
		$P_k' \preccurlyeq P$, so $P_k' \preccurlyeq P_j'$. Since $P_j'$ is isolated, $P_k' = P_j'$, hence $P = P_j'$, and it follows that $P_j'$ is
		minimal in $I^{\uparrow_{\spc}}$.			
	\end{proof}

	Although the primary ideals in different minimal decompositions of an ideal are not necessarily
	the same, the primary ideals whose radicals are isolated are the same. We aim now to establish
	this.

	\begin{lemma}
		Let $I$ be a decomposable ideal in a quantale $\la$ and $P_j$ an ideal in a minimal decomposition of $I$ such that $\rd(P_j)$ is an isolated prime ideal. Then $P_j$ is composed of the elements $a\in \la$
		for which there exists $b\notin \rd(P_j)$ with $a\pr b\in I$.
	\end{lemma}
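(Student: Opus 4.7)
Write $P_i' = \rd(P_i)$ for each $i$, and set $T = \{a \in \la \mid \text{there exists } b \notin P_j' \text{ with } a \pr b \in I\}$. The plan is to establish the equality $P_j = T$ by proving both inclusions, the nontrivial one requiring an explicit construction that exploits the isolation of $P_j'$ in $\{P_1', \ldots, P_n'\}$.

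For $T \preccurlyeq P_j$: given $a \in T$ with witness $b \notin P_j'$, the relation $a \pr b \in I \preccurlyeq P_j$ together with the fact that $P_j$ is $P_j'$-primary forces either $a \in P_j$ or $b^m \in P_j$ for some $m \in \mathds{N}$; the second alternative would place $b$ in $\rd(P_j) = P_j'$, a contradiction, so $a \in P_j$.

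For $P_j \preccurlyeq T$: fix $a \in P_j$ and construct a suitable $b$. Since the radicals $P_i'$ are distinct (Definition \ref{pdd}(\ref{radd})) and $P_j'$ is minimal among them, one has $P_i' \nsubseteq P_j'$ for every $i \neq j$; pick $b_i \in P_i' \setminus P_j'$, and use $P_i' = \rd(P_i)$ to select $n_i \in \mathds{N}$ with $b_i^{n_i} \in P_i$. Define $b$ to be the $\pr$-product of the elements $b_i^{n_i}$ ranging over all $i \neq j$. Then $b \notin P_j'$: iterated primality of $P_j'$ applied first to each $b_i^{n_i}$ and then to the product shows that $b \in P_j'$ would force some $b_i \in P_j'$. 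For each $i \neq j$, repeated use of Lemma \ref{bip}(\ref{mul}) gives $b \preccurlyeq b_i^{n_i} \in P_i$, whence $b \in P_i$ and $a \pr b \in P_i$; and $a \in P_j$ yields $a \pr b \in P_j$. Therefore $a \pr b \in \bigwedge_{i=1}^n P_i = I$, so $a \in T$. The only step of real substance is this construction: $b$ must be large enough to enter every $P_i$ with $i \neq j$ yet small enough to stay outside the prime $P_j'$, and this balance is made possible precisely by the isolation hypothesis.
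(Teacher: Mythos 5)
Your proof is correct and follows essentially the same route as the paper's: both directions hinge on the primary property of $P_j$ for the inclusion $T\preccurlyeq P_j$, and on constructing $b$ as a $\pr$-product of elements lying in each $P_i$ ($i\neq j$) but outside the prime $\rd(P_j)$, with isolation supplying the needed non-containment. The only (harmless) variation is that you select the $b_i$ from the radicals $P_i'$ and pass to powers $b_i^{n_i}$ to land in $P_i$, whereas the paper first shows $P_i\nsubseteq\rd(P_j)$ and picks $b_i\in P_i$ directly.
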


	\begin{proof}
		Suppose $I = \bigwedge_{i=1}^n P_i$ is a minimal decomposition, with $\rd(P_j)$ isolated. We claim that $P_i\nsubseteq \rd(P_j)$, for $i\neq j$. If possible, let $P_i\preccurlyeq  \rd(P_j)$  and let $x\in \rd(P_i)$; then $x^n\in P_i$, for some $n\in \mathds{N},$
		which implies that there exists $m\in \mathds{N}$ such that $x^{nm}\in P_j$,  because $P_i\preccurlyeq  \rd(P_j)$,
		hence $x \in \rd(P_j )$. It follows that $\rd(P_i) \preccurlyeq \rd(P_j)$, which is impossible, because $\rd(P_j)$ is isolated.
		Thus $P_i \nleqslant \rd(P_j)$, as claimed.		
		
		Now
		let $a\in P_j$. From what we have just seen, for $i\neq j$, there exists $b_i\in P_i \neg \rd(P_j).$ Set
		$b = \prod_{i\neq j} b_i$. As $\rd(P_j)$ is a prime ideal, we have $b\notin \rd(P_j)$. However $a\pr b \in P_i$, for $i\neq j$, because $b_i\in P_i$. Also, given that $a\in P_j$, we also have, $a\pr b \in P_j$ and so \[a\pr b \in \bigwedge_{i=1}^nP_i = I.\] Therefore, $a$ is
		an element in $\la$ for which there exists $b \notin \rd(P_j)$ with $a\pr b \in I$.
		We now consider the converse. Suppose that $a \in \la$ and $b\notin \rd(P_j)$ are such that $a\pr b \in I$. Since
		$I\preccurlyeq  P_j$, we have $a\pr b \in P_j$. As $P_j$ is primary and $b^n\notin 
		P_j$, for any $n\in \mathds{N}$, we must have $a \in P_j$.
		This concludes the proof. 
	\end{proof}

	\begin{corollary}
		If $I$ is a decomposable ideal in a quantale $\la$, then the primary ideals in a minimal
		decomposition of $I$ corresponding to isolated prime ideals are uniquely determined.
	\end{corollary}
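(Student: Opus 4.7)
The plan is to reduce the uniqueness claim entirely to the previous lemma, which gives an intrinsic description of any primary component whose radical is an isolated prime. The strategy hinges on separating ``what'' the isolated primes are (which I already know from the earlier corollary, where they are characterized as the minimal elements of $I^{\uparrow_{\spc}}$, and thus depend only on $I$) from ``which primary ideal sits over them'' in a given decomposition.

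First I would take two minimal primary decompositions of the same ideal, say $I = \bigwedge_{i=1}^n P_i = \bigwedge_{k=1}^m Q_k$, and let $P'$ be a fixed isolated prime belonging to $I$. By the earlier discussion, the set of isolated primes coincides with the set of minimal elements of $I^{\uparrow_{\spc}}$ and is therefore the same for both decompositions. So in each decomposition there is exactly one primary component whose radical is $P'$; call them $P_j$ and $Q_\ell$, so that $\rd(P_j) = \rd(Q_\ell) = P'$.

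Next I would apply the preceding lemma to each of the two decompositions separately. The lemma tells me that
\[
P_j = \{\, a \in \la \mid \text{there exists } b \notin P' \text{ with } a \pr b \in I\,\},
\]
and the identical expression describes $Q_\ell$. The right-hand side depends only on the ideal $I$ and on the isolated prime $P'$, not on the particular minimal decomposition chosen. Therefore $P_j = Q_\ell$, which is exactly the asserted uniqueness.

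There is really no obstacle here beyond bookkeeping, since the nontrivial content is already packaged in the lemma; the only subtle point is making sure that in a minimal decomposition the primary component with a given radical is unique (guaranteed by condition (\ref{radd}) of Definition \ref{pdd}) so that the phrase ``the primary ideal corresponding to an isolated prime'' is unambiguous. Once that is noted, the proof is a one-line appeal to the lemma applied twice and compared.
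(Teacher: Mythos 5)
Your argument is correct and is precisely the intended one: the paper states this corollary without proof because it follows immediately from the preceding lemma, which (as you note) characterizes the primary component attached to an isolated prime $P'$ intrinsically in terms of $I$ and $P'$ alone, so any two minimal decompositions must share that component. Your added care about the isolated primes themselves being decomposition-independent (via their characterization as the minimal elements of $I^{\uparrow_{\spc}}$) is exactly the right supporting observation.
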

	
	\subsection{Irreducible and strongly irreducible ideals}\label{isii}
	
	The concept of strongly irreducible ideals, originally known as primitive ideals, was introduced in \cite{Fuc49} for commutative rings. In \cite[p.\,301, Exercise 34]{Bou72}, these ideals are referred to as quasi-prime. The term ``strongly irreducible'' was initially employed for noncommutative rings in \cite{Bla53}. Strongly irreducible ideals (of rings) are significant from both algebraic and topological aspects. For the detail, we refer our reader to \cite{Azi08, Bla53, DG23, FGS23}.

	\begin{definition}\label{doi}
		Suppose $\la $ is a quantale.
		\begin{enumerate}
			\item An ideal $I$ of $\la$ is called \emph{irreducible} if  $J\wedge  J'= I$ implies that either $J= I$ or $J'= I$ for all $J,$ $J'\in \id$.
			
			\item\label{dsi} An ideal $I$ of $\la$ is called \emph{strongly irreducible} if  $J\wedge  J'\preccurlyeq I$ implies that either $J\preccurlyeq  I$ or $J'\preccurlyeq I$ for all $J,$ $ J'\in \id$.
		\end{enumerate}
		We shall use the notation $\mathrm{Irr}(\la)$ to represent the set of irreducible ideals in $\la$ and $\mathrm{Irr}^+(\la)$ to denote the set of strongly irreducible ideals in $\la$. 
	\end{definition}
	
	The following result gives an equivalent definition of a strongly irreducible ideal in terms of elements of a quantale, and it extends Proposition 7.33 of \cite{G99} and Theorem 20 of \cite{Bla53}.

	\begin{proposition}
		\label{abi} 
		An ideal $I$ in a quantale $\la $ is strongly irreducible if and only if	for all $a$, $b\in \la $ satisfying the condition\,$:$ $\langle a\rangle  \wedge  \langle b\rangle \preccurlyeq  I$ implies either $a\in I$ or $b\in I$.
	\end{proposition}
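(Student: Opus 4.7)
The plan is to prove the biconditional in the natural direction for each implication, with the forward direction being an immediate specialization of Definition~\ref{doi}(\ref{dsi}) to principal ideals, and the reverse direction being a contrapositive argument that exploits the fact that every element of an ideal sits below its own principal ideal.

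For the ``only if'' direction, I would assume $I$ is strongly irreducible and take $a, b \in \la$ with $\langle a\rangle \wedge \langle b\rangle \preccurlyeq I$. Applying Definition~\ref{doi}(\ref{dsi}) with $J = \langle a\rangle$ and $J' = \langle b\rangle$ directly yields $\langle a\rangle \preccurlyeq I$ or $\langle b\rangle \preccurlyeq I$. Since $a \in \langle a\rangle$ and $b \in \langle b\rangle$ (use Definition~\ref{igs} with $n=1$ and $a_1 = \top \pr a = a$, invoking Definition~\ref{mld}(\ref{mid})), we conclude $a \in I$ or $b \in I$.

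For the ``if'' direction, I would argue by contraposition. Suppose $I$ is not strongly irreducible, so there exist ideals $J, J' \in \id$ with $J \wedge J' \preccurlyeq I$, but $J \nsubseteq I$ and $J' \nsubseteq I$. Pick $a \in J \neg I$ and $b \in J' \neg I$. The key small observation to isolate is that for any $x \in \la$ and any ideal $K$, the inclusion $x \in K$ forces $\langle x\rangle \preccurlyeq K$: indeed, any generator of $\langle x\rangle$ has the form $q \pr x$ with $q \in \la$, and by Lemma~\ref{bip}(\ref{mul}) we have $q \pr x \preccurlyeq x \in K$, so $q \pr x \in K$; closure of $K$ under finite joins and under going down then places every element of $\langle x\rangle$ in $K$. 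Applying this to $a \in J$ and $b \in J'$ yields $\langle a\rangle \preccurlyeq J$ and $\langle b\rangle \preccurlyeq J'$, hence
\[
\langle a\rangle \wedge \langle b\rangle \preccurlyeq J \wedge J' \preccurlyeq I.
\]
By the hypothesis, this forces $a \in I$ or $b \in I$, contradicting the choice of $a, b$.

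I do not foresee a serious obstacle; the only subtle point is the small lemma $x \in K \Rightarrow \langle x\rangle \preccurlyeq K$, which relies crucially on the integrality of the quantale (the identity $x \pr \top = x$ used in place, and the inequality $q \pr x \preccurlyeq x$ from Lemma~\ref{bip}(\ref{mul})). This is precisely the content of Remark~\ref{iax}, so the proof needs only to cite those elementary facts rather than re-derive them.
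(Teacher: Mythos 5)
Your proposal is correct and follows essentially the same route as the paper: the forward direction specializes the definition of strong irreducibility to the principal ideals $\langle a\rangle$ and $\langle b\rangle$, and the converse is the same contrapositive argument choosing witnesses $a\in J\neg I$ and $b\in J'\neg I$. Your explicit justification of the step $x\in K\Rightarrow \langle x\rangle\preccurlyeq K$ (via Lemma~\ref{bip}(\ref{mul}) and the ideal axioms) fills in a detail the paper leaves implicit, but the argument is the same.
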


	\begin{proof}
		Suppose that $I$ is a strongly irreducible ideal in $\la$ and $\langle a\rangle  \wedge  \langle b\rangle \preccurlyeq  I$ for some $a,$ $b\in \la.$ 
		Since $I$ is strongly irreducible, we must have either $a\in \langle a\rangle\preccurlyeq  I$ or  $b\in \langle b\rangle\preccurlyeq  I$. For the converse, suppose that $I$ is an ideal in $\la$ that it is not strongly irreducible. Then there are ideals $J$ and $K$ of $\la$ satisfy $J\wedge  K\preccurlyeq  I,$ but $J\nleqslant I$ and $K\nleqslant I.$ This implies there exists $j\in J$ and $k\in K$ such that $\langle j\rangle \wedge  \langle k\rangle \preccurlyeq  I$, but neither $j\notin I$ nor $k\in I$, a contradiction.
	\end{proof}

	In Theorem \ref{spkr}, it was noted that a radical ideal could be presented as the intersection of prime ideals that contain it. Now, we aim to illustrate in the next proposition that any proper ideal can be expressed in a comparable fashion, albeit utilizing irreducible ideals instead. This result extends Proposition 7.35 of \cite{G99}. However, prior to advancing further, we must establish a lemma.

	\begin{lemma}\label{lir}
		Consider a quantale $\la$. Assume that $\bot \neq x \in \la$ and $I$ is a proper ideal in $\la$ such that $x \notin I$. In this case, there exists an irreducible ideal $J$ of $\la$ satisfying the conditions $I \preccurlyeq J$ and $x \notin J.$
	\end{lemma}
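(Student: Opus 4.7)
The plan is to prove this by a standard application of Zorn's lemma to the set of proper ideals lying between $I$ and the ``forbidden'' element $x$, and then to argue that maximality forces irreducibility.

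First, I would introduce the set
\[
\Omega = \{K \in \idp \mid I \preccurlyeq K \text{ and } x \notin K\},
\]
which is nonempty because $I \in \Omega$ by hypothesis. To apply Zorn's lemma, I need to show that every chain $\{K_\lambda\}_{\lambda \in \Lambda}$ in $\Omega$ has an upper bound in $\Omega$. The natural candidate is the join $K := \bigvee_{\lambda \in \Lambda} K_\lambda$, which is an ideal by construction (Definition \ref{opi}(\ref{join})) and clearly contains $I$. The essential point is that $x \notin K$: if we had $x \in K$, then $x \preccurlyeq \bigvee_{i=1}^n y_i$ for finitely many $y_i \in K_{\lambda_i}$; since $\{K_\lambda\}$ is a chain, all these $K_{\lambda_i}$ sit inside a single $K_{\lambda_0}$, whence $\bigvee_{i=1}^n y_i \in K_{\lambda_0}$ and therefore $x \in K_{\lambda_0}$ by the downward closure axiom (\ref{cul}) of Definition \ref{iml}, contradicting $K_{\lambda_0} \in \Omega$. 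This also shows $K \ne \la$, so $K \in \Omega$.

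By Zorn's lemma, $\Omega$ has a maximal element $J$. The remaining task is to verify that $J$ is irreducible. Suppose for contradiction that $J = J_1 \wedge J_2$ with $J \subsetneq J_1$ and $J \subsetneq J_2$. Then neither $J_1$ nor $J_2$ lies in $\Omega$ (by maximality of $J$), and since both properly contain $I$ and are proper ideals (were they equal to $\la$, their meet would again be the other ideal, contradicting $J \subsetneq J_i$), the only way they can fail to be in $\Omega$ is by containing $x$. But then $x \in J_1 \wedge J_2 = J$, contradicting $J \in \Omega$. Hence $J$ is irreducible.

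The main obstacle is the verification that the chain's join remains outside $\{x\}$'s reach; this uses the finitary nature of the join in Definition \ref{opi}(\ref{join}) together with the totally ordered structure of the chain, and it is the one place where the argument is not purely formal. The irreducibility step afterwards is a clean maximality argument, and the nontriviality of $J_1, J_2$ being proper is automatic: if, say, $J_1 = \la$, then $J = \la \wedge J_2 = J_2$, contradicting $J \subsetneq J_2$.
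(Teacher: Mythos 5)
Your proof is correct and follows essentially the same route as the paper: apply Zorn's lemma to the family of ideals containing $I$ and avoiding $x$, then show a maximal element must be irreducible because both factors of a proper decomposition would be forced to contain $x$. In fact your write-up is more careful than the paper's at the two points it leaves implicit --- the verification that $x$ stays outside the join of a chain (via finitariness of the join in Definition \ref{opi}(\ref{join})) and the observation that $J_1, J_2$ must be proper --- so nothing further is needed.
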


	\begin{proof}
		Let
		$\{J_{\lambda}\}_{\lambda \in \Lambda}$ be a chain of ideals in $\la$ such that $I\preccurlyeq J_{\lambda}$ and $x\notin  J_{\lambda}$ for all $\lambda \in \Lambda$. Then
		$I\preccurlyeq  \bigvee_{\lambda \in \Lambda} J_{\lambda}
		$ and $x\notin  J_{\lambda}$, for all $\lambda \in \Lambda$.
		By Zorn's lemma, there exists a maximal element $J$ of this chain. Suppose that  $J=J_1\wedge  J_2.$ By the maximality condition of $J$, we must have $x\in J_1$ and $x\in J_2,$ and hence $x\in J_1\wedge  J_2=J,$ a contradiction. Therefore, $J$ is the required irreducible ideal.
	\end{proof}
	
	\begin{proposition}
		If $I$ is a proper ideal in a quantale $\la$, then $I=\bigwedge_{I\preccurlyeq  J} \{J\mid J \in \mathrm{Irr}(\la) \}$.
	\end{proposition}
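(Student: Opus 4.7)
The plan is to prove the two inclusions separately. The inclusion $I \preccurlyeq \bigwedge_{I \preccurlyeq J}\{J \mid J \in \mathrm{Irr}(\la)\}$ is immediate from the definition of meet in Definition \ref{opi}(\ref{meet}), since $I$ is below every ideal in the family being intersected.

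For the reverse inclusion, I would argue by contrapositive: suppose $x \in \la$ with $x \notin I$; the goal is to produce an irreducible ideal $J$ with $I \preccurlyeq J$ and $x \notin J$, so that $x$ is not in the intersection. First I note that $\bot \in I$ (since $I$ is nonempty and downward closed, any $y \in I$ satisfies $\bot \preccurlyeq y$, forcing $\bot \in I$), so the hypothesis $x \notin I$ entails $x \neq \bot$. This places us exactly in the situation of Lemma \ref{lir}, which directly furnishes an irreducible ideal $J$ with $I \preccurlyeq J$ and $x \notin J$. Consequently $x \notin \bigwedge_{I \preccurlyeq J}\{J \mid J \in \mathrm{Irr}(\la)\}$, completing the inclusion.

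There is essentially no obstacle here, since Lemma \ref{lir} is precisely the separation statement needed; the proposition is really just the packaging of that lemma as an intersection representation. The only subtlety worth flagging is verifying $x \neq \bot$ so as to legitimately invoke Lemma \ref{lir}, and this is handled by the observation above that $\bot$ lies in every ideal.
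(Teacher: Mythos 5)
Your proof is correct and follows essentially the same route as the paper: the trivial inclusion plus an application of Lemma \ref{lir} to separate any $x\notin I$ from the intersection. Your explicit check that $x\neq\bot$ (since $\bot$ lies in every ideal) is a small but welcome detail that the paper's own proof leaves implicit.
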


	\begin{proof}
		By Lemma \ref{lir}, there exists an irreducible ideal $J$ of  $\la$ such that $I\preccurlyeq  J$. Let \[J'=\bigwedge_{I\preccurlyeq  J} \{J\mid J\in \mathrm{Irr}(\la) \}.\]
		Then $I\preccurlyeq  J'$. We claim that $J'=I.$   If $J'\neq I$, then there exists an $x\in J'\neg I$, and by Lemma \ref{lir}, there exists an irreducible ideal $J''$ such that $x\notin J''$ and $I\preccurlyeq J''$, a contradiction.
	\end{proof}

	In the context of a Noetherian ring, it is widely recognized that radicals can be expressed as a finite intersection of prime ideals. Expanding upon this idea, the following proposition extends this result to include any ideal by utilizing irreducible ideals. This result also extends Proposition 7.3 of \cite{N18}. We say a quantale \emph{Noetherian} if every ascending chain of ideals eventually becomes stationary.

	\begin{proposition}
		If $\la$ is a  Noetherian quantale, then every ideal in $\la $ can be represented as the intersection of a finite number of irreducible ideals in $\la$.
	\end{proposition}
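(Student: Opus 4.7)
The plan is to argue by contradiction via standard Noetherian induction. Let $\Sigma$ denote the collection of ideals in $\la$ that \emph{cannot} be expressed as a finite meet of irreducible ideals, and suppose for contradiction that $\Sigma \neq \emptyset$. The first step is to extract a maximal element from $\Sigma$. Since $\la$ is Noetherian, every ascending chain of ideals stabilizes; a routine application of this (or equivalently, Zorn's lemma applied to $\Sigma$, using that any chain in $\Sigma$ has an upper bound given by its join, which must itself lie in $\Sigma$ provided we check it cannot be a finite intersection of irreducibles — but here the cleaner route is the ascending chain condition directly) produces a maximal element $I \in \Sigma$.

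Next, I would exploit the failure of irreducibility of $I$. Indeed, $I$ itself belongs to $\Sigma$, so in particular $I \notin \mathrm{Irr}(\la)$: otherwise $I = I$ would already be a (trivial) finite intersection of irreducibles. Hence by Definition \ref{doi} there exist ideals $J, J' \in \id$ with $I = J \wedge J'$ but $J \neq I$ and $J' \neq I$. Since $I \preccurlyeq J$ and $I \preccurlyeq J'$, these inclusions are strict; thus both $J$ and $J'$ properly contain $I$.

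Now the maximality of $I$ in $\Sigma$ forces $J \notin \Sigma$ and $J' \notin \Sigma$, so each of $J$ and $J'$ admits a representation as a finite meet of irreducible ideals, say $J = \bigwedge_{i=1}^{m} K_i$ and $J' = \bigwedge_{j=1}^{n} K_j'$ with $K_i, K_j' \in \mathrm{Irr}(\la)$. Combining these yields
\[
I = J \wedge J' = \Bigl(\bigwedge_{i=1}^{m} K_i\Bigr) \wedge \Bigl(\bigwedge_{j=1}^{n} K_j'\Bigr),
\]
which is itself a finite meet of irreducible ideals. This contradicts $I \in \Sigma$, so $\Sigma$ must be empty, and every ideal of $\la$ has the desired representation.

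The main (indeed, essentially the only) point requiring care is the very first step: extracting a maximal element from $\Sigma$ under the Noetherian hypothesis. The hypothesis as stated says ascending chains stabilize, and this is equivalent to every nonempty family of ideals admitting a maximal element — a standard equivalence — so the extraction is justified. After that, the argument is entirely formal and proceeds mechanically from Definition \ref{doi}(1).
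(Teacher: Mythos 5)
Your proposal is correct and follows essentially the same argument as the paper: take a maximal element of the family of ideals not expressible as a finite meet of irreducibles (which exists by the Noetherian hypothesis), observe it cannot be irreducible, write it as a meet of two strictly larger ideals, and derive a contradiction from their decompositions. Your added remark justifying the passage from the ascending chain condition to the existence of a maximal element is a reasonable point of care that the paper leaves implicit.
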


	\begin{proof}
		Suppose that 
		\[\mathcal{F}=\left\{J\in  \rd(\la) \mid J\neq \bigwedge_{j=1}^n I_j,\;I_j \in \mathrm{Irr}(\la)\right\}.\]  It is sufficient to show that $\mathcal{F}=\emptyset$.
		Since $\la$ is Noetherian, $\mathcal{F}$ has a maximal element, say $I$. Since $I \in \mathcal{F}$, it is not a finite
		intersection of irreducible ideals in $\la$. This implies that $I$ is not  irreducible. Hence, there are ideals $J$ and $K$ such that  $I\preccurlyeq J$, $I\preccurlyeq K$, $I\neq J$, $I\neq K$, and $I = J \wedge  K.$ Since $I$ is
		a maximal element of $\mathcal{F}$, we must have $J,$ $K \notin \mathcal{F}.$ Therefore, $J$ and $K$ are a finite intersection of
		irreducible ideals in $\la$, which followingly implies that $I $ is also a finite intersection of irreducible
		ideals in $\la$, a contradiction.
	\end{proof}

	The following proposition establishes the relationships between prime, semiprime, and strongly irreducible ideals, and it extends Proposition 7.36 of \cite{G99} and Theorem 2.1(i) of \cite{Azi08}.

	\begin{proposition}
		
		\label{prira} 
		If $P$ is a strongly irreducible ideal in  $\la$, then $P$ is prime if and only if $P$ is radical.
	\end{proposition}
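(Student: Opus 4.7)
The forward direction is routine: every prime ideal is radical. Indeed, if $P$ is prime and $x \in \rd(P)$, then $x^n \in P$ for some $n \in \mathds{N}$; an easy induction on $n$ using the primeness of $P$ yields $x \in P$, so $\rd(P) \preccurlyeq P$, and the reverse containment holds for every ideal by Lemma \ref{radpr}(\ref{irad}). Note that this half does not require strong irreducibility.

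For the reverse direction, assume $P$ is strongly irreducible and radical. Let $x,\,y\in\la$ with $x\pr y\in P$; the plan is to force $\langle x\rangle\wedge\langle y\rangle\preccurlyeq P$ and then invoke Proposition \ref{abi}. My first step is to verify that $\langle x\rangle\pr\langle y\rangle\preccurlyeq P$. This is essentially the computation already carried out in the proof of Proposition \ref{lpsp}(\ref{kpp}): any element of $\langle x\rangle\pr\langle y\rangle$ is dominated by a finite join of terms of the form $(l\pr x)\pr(m\pr y)=(l\pr m)\pr(x\pr y)$, each of which is in $P$ because $P$ is an ideal containing $x\pr y$.

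The second and key step upgrades this product containment to a meet containment, and this is where the radical hypothesis is used. By Lemma \ref{radpr}(\ref{ijicj}),
\begin{equation*}
\rd(\langle x\rangle\wedge\langle y\rangle)=\rd(\langle x\rangle\pr\langle y\rangle)\preccurlyeq\rd(P)=P,
\end{equation*}
the last equality being the assumption that $P$ is radical. Since $\langle x\rangle\wedge\langle y\rangle\preccurlyeq\rd(\langle x\rangle\wedge\langle y\rangle)$ by Lemma \ref{radpr}(\ref{irad}), we conclude $\langle x\rangle\wedge\langle y\rangle\preccurlyeq P$.

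Finally, applying the element-level characterisation of strong irreducibility from Proposition \ref{abi}, either $x\in P$ or $y\in P$, which shows $P$ is prime. The only delicate point in the argument is bridging the gap between $I\pr J\preccurlyeq P$ and $I\wedge J\preccurlyeq P$, and this is precisely what the radical hypothesis is tailored for via Lemma \ref{radpr}(\ref{ijicj}); without it, strong irreducibility alone would not suffice.
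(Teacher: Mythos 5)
Your proof is correct and uses the same central mechanism as the paper: the radical hypothesis together with Lemma \ref{radpr}(\ref{ijicj}) upgrades the product containment to a meet containment, after which strong irreducibility finishes the argument. The only (cosmetic) difference is that you work at the element level with principal ideals and Proposition \ref{abi}, whereas the paper argues directly with arbitrary ideals $I\pr J\preccurlyeq P$ and the ideal-level characterisation of primeness from Proposition \ref{lpsp}(\ref{kpp}).
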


	\begin{proof}
		Notice that if $P$ is prime, then obviously $P$ is a radical ideal. For the converse,  suppose that  $P$ is a radical ideal and $I\pr J\preccurlyeq  P$, for some $I,$ $J\in \id .$ Then \[I\wedge  J\preccurlyeq  \rd(I\wedge  J)=\rd(I\pr J)\preccurlyeq \rd(P)=P,\] where the first inclusion and the first equality respectively follow from Lemma \ref{radpr}(\ref{irad}) and Lemma \ref{radpr}(\ref{ijicj}). Moreover, the second inclusion and the last equality respectively follow from Lemma \ref{radpr}(\ref{ijrirj}) and the fact that $P$ is a prime ideal. Since $P$ is strongly irreducible, either $I\preccurlyeq  P$ or $J\preccurlyeq  P.$
	\end{proof}

	In the context of commutative rings, it has been established (see \cite[Theorem 2.1(ii)]{Azi08}) that every proper ideal is   contained in a minimal strongly irreducible ideal. Remarkably, the same principle applies to strongly irreducible ideals in quantales.

	\begin{proposition}
		Every proper ideal in a quantale is contained in a minimal strongly irreducible ideal.
	\end{proposition}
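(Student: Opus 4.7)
The plan is to apply Zorn's lemma to the poset
$\mathcal{S}=\{M\in\mathrm{Irr}^+(\la)\mid I\preccurlyeq M\}$,
ordered by inclusion, and extract a minimal element. Two things need to be checked: that $\mathcal{S}$ is nonempty, and that every chain in $\mathcal{S}$ admits a lower bound inside $\mathcal{S}$.

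For nonemptiness, I will invoke Corollary~\ref{eicm} to obtain a maximal ideal $M$ of $\la$ with $I\preccurlyeq M$, and then show that every maximal ideal is strongly irreducible. If $J\wedge J'\preccurlyeq M$ while $J\not\preccurlyeq M$ and $J'\not\preccurlyeq M$, then maximality of $M$ forces $J\vee M=J'\vee M=\la$, and Proposition~\ref{bpi}(9) yields $(J\pr J')\vee M=\la$. However, Proposition~\ref{bpi}(6) gives $J\pr J'\preccurlyeq J\wedge J'\preccurlyeq M$, so $(J\pr J')\vee M=M\neq \la$, which is the required contradiction.

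For the chain condition, given a chain $\{M_\lambda\}_{\lambda\in\Lambda}$ in $\mathcal{S}$, I will set $S=\bigwedge_{\lambda}M_\lambda$. Then $I\preccurlyeq S$ is automatic, and the task reduces to checking that $S$ is strongly irreducible. Assuming $J\wedge J'\preccurlyeq S$ with witnesses $a\in J\neg S$ and $b\in J'\neg S$, I pick $\lambda_1,\lambda_2$ with $a\notin M_{\lambda_1}$ and $b\notin M_{\lambda_2}$. By totality of the chain, one of $M_{\lambda_1},M_{\lambda_2}$ is contained in the other; the smaller one, call it $M_{\lambda_0}$, then excludes both $a$ and $b$. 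Since $J\wedge J'\preccurlyeq S\preccurlyeq M_{\lambda_0}$, strong irreducibility of $M_{\lambda_0}$ will force $a\in M_{\lambda_0}$ or $b\in M_{\lambda_0}$, yielding the desired contradiction.

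The only genuinely subtle point is this chain step: strong irreducibility is not preserved under arbitrary intersections, and what rescues the argument is that the total order on the chain lets both ``defect'' elements $a,b$ be concentrated inside a single member of the chain. Once nonemptiness and the chain condition are in hand, Zorn's lemma applied to the reverse order on $\mathcal{S}$ produces the required minimal strongly irreducible ideal above $I$.
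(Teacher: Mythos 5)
Your proof is correct and follows the same strategy as the paper: apply Zorn's lemma to the set of strongly irreducible ideals containing $I$, with nonemptiness secured by a maximal ideal above $I$ (the paper routes this through ``maximal $\Rightarrow$ prime $\Rightarrow$ strongly irreducible,'' while you fuse the two steps using parts (6) and (9) of Proposition \ref{bpi}). The only substantive difference is that you explicitly verify the chain condition --- that the intersection of a chain of strongly irreducible ideals containing $I$ is again such an ideal --- which the paper invokes Zorn's lemma without checking; your observation that the total order on the chain lets both witnesses $a$ and $b$ be excluded from a single member is precisely the point that makes this step work.
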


	\begin{proof}
		Let $I$ be a proper ideal in a quantale $\la$ and let \[\mathcal{E}=\{J\mid I\preccurlyeq  J,\;J\in \mathrm{Irr}^+(\la)\}.\]
		Note that every maximal ideal in $\la $ is prime, and by Lemma \ref{bpi}(\ref{mip}), every prime ideal is strongly irreducible. Also, observe that by Theorem \ref{emi}, every proper ideal is contained in a maximal ideal. Therefore, the set $\mathcal{E}$ is nonempty. By Zorn's lemma, $\mathcal{E}$ has a minimal element, which is our desired minimal strongly irreducible ideal.
	\end{proof}

	The following result demonstrates the scenario in which all ideals in a quantale are strongly irreducible, and its proof is evident. This result extends Lemma 3.5 of \cite{Azi08}.

	\begin{proposition}
		Every ideal in a quantale $\la$ is strongly irreducible if and only if  $\id $ is totally ordered. 
	\end{proposition}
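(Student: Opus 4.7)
The plan is to prove both directions directly from the definition of strongly irreducible ideals (Definition~\ref{doi}(\ref{dsi})) together with the observation that the meet in $\id$ is intersection.

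For the forward direction, assume every ideal in $\la$ is strongly irreducible, and fix arbitrary $I, J \in \id$. I would consider the ideal $I \wedge J$ itself, which by hypothesis is strongly irreducible. Applying the strong irreducibility condition to the trivial inclusion $I \wedge J \preccurlyeq I \wedge J$ forces either $I \preccurlyeq I \wedge J$ or $J \preccurlyeq I \wedge J$. In the first case, combining with $I \wedge J \preccurlyeq J$ yields $I \preccurlyeq J$; in the second case, symmetrically $J \preccurlyeq I$. Hence $\id$ is totally ordered under $\preccurlyeq$.

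For the converse, assume $\id$ is totally ordered, fix an ideal $I$, and let $J, J' \in \id$ be such that $J \wedge J' \preccurlyeq I$. Since $\id$ is totally ordered, either $J \preccurlyeq J'$ or $J' \preccurlyeq J$. In the first case $J \wedge J' = J$, so $J \preccurlyeq I$; in the second case $J \wedge J' = J'$, so $J' \preccurlyeq I$. Either way the defining condition for strong irreducibility is satisfied, so $I \in \mathrm{Irr}^+(\la)$.

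There is no real obstacle here: both implications are essentially tautological once one recognises that applying the strong irreducibility of $I \wedge J$ to itself is what breaks the symmetry in the forward direction. The only care needed is to explicitly invoke the meet-as-intersection interpretation from Definition~\ref{opi}(\ref{meet}) so that $J \wedge J' = J$ whenever $J \preccurlyeq J'$; everything else is manipulation of the order relation.
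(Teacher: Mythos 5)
Your proof is correct, and it fills in exactly the argument the paper leaves implicit (the paper states only that the proof ``is evident'' and gives no details): the key step of applying strong irreducibility of $I\wedge J$ to the trivial inclusion $I\wedge J\preccurlyeq I\wedge J$ is the standard way to get total ordering, and the converse is the tautology you describe.
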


	We bring this subsection to a close with a theorem pertaining to arithmetic quantales, wherein the notions of irreducibility and strongly irreducibility align. The proof of this theorem is identical to that of \cite[Theorem 3 and Theorem 7]{Is{e}56}

	\begin{theorem}
		In an arithmetic quantale $\la$, an ideal in $\la$ is
		irreducible if and only if it is strongly irreducible. Conversely, 
		if an irreducible ideal in a quantale $\la$ is strongly
		irreducible, then $\la $ is arithmetic.
	\end{theorem}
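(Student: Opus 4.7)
The plan is to take ``arithmetic'' in its standard lattice-theoretic sense, that $\id$ is a distributive lattice, and to treat the two implications separately. One half is unconditional and can be recorded first: a strongly irreducible ideal $I$ is always irreducible, for if $J\wedge J'=I$ then $J\wedge J'\preccurlyeq I$ forces $J\preccurlyeq I$ or $J'\preccurlyeq I$ by Definition~\ref{doi}(\ref{dsi}), while $I\preccurlyeq J$ and $I\preccurlyeq J'$ hold automatically, so $I\in\{J,J'\}$.

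For the forward direction, assume $\la$ is arithmetic and let $I$ be irreducible with $J\wedge J'\preccurlyeq I$. I would replace $J$ and $J'$ by $I\vee J$ and $I\vee J'$, and apply the dual distributive law (which is equivalent to distributivity in any lattice) to compute
\[(I\vee J)\wedge(I\vee J')=I\vee(J\wedge J')=I,\]
using $J\wedge J'\preccurlyeq I$ at the second step. Irreducibility of $I$ then gives $I\vee J=I$ or $I\vee J'=I$, i.e.\ $J\preccurlyeq I$ or $J'\preccurlyeq I$, so $I$ is strongly irreducible.

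For the converse, assume every irreducible ideal of $\la$ is strongly irreducible and fix $A,B,C\in\id$. Only the inclusion $A\wedge(B\vee C)\preccurlyeq(A\wedge B)\vee(A\wedge C)$ is nontrivial, so I argue it by contradiction: suppose some $x\in A\wedge(B\vee C)$ fails to lie in $(A\wedge B)\vee(A\wedge C)$. Applying Lemma~\ref{lir} to that ideal and the element $x$ produces an irreducible ideal $P$ with $(A\wedge B)\vee(A\wedge C)\preccurlyeq P$ and $x\notin P$; by hypothesis $P$ is strongly irreducible. Strong irreducibility applied to $A\wedge B\preccurlyeq P$ yields $A\preccurlyeq P$ or $B\preccurlyeq P$, and applied to $A\wedge C\preccurlyeq P$ yields $A\preccurlyeq P$ or $C\preccurlyeq P$. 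If $A\preccurlyeq P$, then $x\in A\preccurlyeq P$; otherwise both $B\preccurlyeq P$ and $C\preccurlyeq P$, so $B\vee C\preccurlyeq P$ and again $x\in B\vee C\preccurlyeq P$. Either way $x\in P$, a contradiction, so $\id$ is distributive and $\la$ is arithmetic.

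The only real subtlety is pinning down what \emph{arithmetic quantale} is meant to denote, since the excerpt does not record a definition; once this is fixed as distributivity of $\id$, both directions reduce to short lattice-theoretic manipulations, with the converse resting essentially on the separation property supplied by Lemma~\ref{lir}.
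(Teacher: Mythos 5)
Your proof is correct; the paper itself writes out no argument, deferring to Is\'eki's Theorems 3 and 7 for semirings, and your reconstruction --- the dual distributive identity $(I\vee J)\wedge(I\vee J')=I\vee(J\wedge J')$ for the forward direction together with the always-true implication from strong irreducibility to irreducibility, and the separation Lemma \ref{lir} for the converse --- is exactly that standard argument. Your reading of ``arithmetic'' as distributivity of $\id$ is the one the paper implicitly intends (it is the definition used by Fuchs and Is\'eki), and your handling of the edge cases (properness of $(A\wedge B)\vee(A\wedge C)$ and $x\neq\bot$ before invoking Lemma \ref{lir}) is sound.
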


	\begin{corollary}
		In an arithmetic quantale, any ideal is the intersection of all strongly irreducible ideals containing it.
	\end{corollary}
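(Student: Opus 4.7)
The plan is essentially a direct substitution using two results already established in this section. First, I would invoke the proposition proved earlier stating that every proper ideal $I$ in a quantale $\la$ admits the representation
\[I = \bigwedge_{I \preccurlyeq J} \{J \mid J \in \mathrm{Irr}(\la)\},\]
so $I$ is the meet of all irreducible ideals containing it. Second, I would apply the immediately preceding theorem, which identifies $\mathrm{Irr}(\la)$ with $\mathrm{Irr}^+(\la)$ in the arithmetic setting.

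Combining these, the claim reduces to replacing the indexing family $\mathrm{Irr}(\la)$ by $\mathrm{Irr}^+(\la)$ in the displayed representation, which yields
\[I = \bigwedge_{I \preccurlyeq J} \{J \mid J \in \mathrm{Irr}^+(\la)\},\]
precisely the statement of the corollary. The improper case $I = \la$ is automatic, since the family of ideals of $\la$ containing $\la$ consists of $\la$ itself, whose meet is $\la$.

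I do not anticipate any real obstacle: the entire content of the corollary is a one-line consequence of the preceding theorem. The only subtlety worth flagging is that the arithmetic hypothesis enters solely through the identification of irreducible and strongly irreducible ideals; it plays no further role in the argument, and the representation theorem for ideals used in the first step does not itself require $\la$ to be arithmetic.
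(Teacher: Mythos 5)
Your proposal is correct and is exactly the argument the paper intends (the corollary is stated without proof, as an immediate consequence of the representation of a proper ideal as the meet of the irreducible ideals containing it, combined with the theorem identifying irreducible and strongly irreducible ideals in an arithmetic quantale). Your extra remark about the improper case $I=\la$ is a harmless completeness check that the paper leaves implicit.
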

	
	\begin{cremark}
		\label{fuw}
		
		Building upon the framework of the ``ideal theory of quantales'' that we have begun to develop in this paper, we shall now provide a brief outline of our future work. In \cite{DG23'}, our focus  is on examining the properties of different ideals in relation to the quotient of a quantale. We shall also introduce the concept of localization for a quantale. Additionally, by considering modules over quantales, we shall extend some of the significant results in commutative algebra. 
		
		On the other hand, in \cite{DG23''}, our objective  is to establish lower topologies on distinguished classes of ideals in a quantale, and this  extends the work done in \cite{DG23} and \cite{FGS23} for rings. We shall investigate the topological properties of ``quantale spaces,'' including quasi-compactness, separation properties, connectedness, continuity, and spectral spaces (as defined in \cite{Hoc69}). Specifically, we shall delve into the detailed examination of quantale spaces associated with prime ideals, minimal prime ideals, maximal ideals, and strongly irreducible ideals in a quantale. These investigations  respectively extend the work carried out in \cite{Gro60}, \cite{HJ65}, \cite{Sto37}, and \cite{Azi08} for rings. Furthermore, we shall demonstrate that the quantale space of proper ideals is spectral, thereby generalizing the corresponding result presented in \cite{Gos23} for rings.
		
		Since the definition of quantale we have considered here essentially gives semirings, it is worth it to see how many exclusive types of ideals ($k$-ideals, strong ideals, semisubtractive ideals) of semirings have their abstraction in our setup.  For example, $k$-ideals studied in \cite{GD24} will become all ideals in quantales.
		
		While our focus in this paper was on a commutative quantale with identity $1$, it is worth noting that the developed theory can be further extended to a complete lattice $(\mathsf{L}, \preceq,  \bot, \top)$ equipped with an operation denoted as $\odot$ that retains Axioms (1), (2), and (4) from Definition \ref{mld}, as well as the condition: \[x\odot (y\vee z)=(x\odot y) \vee (x\odot z),\] for all $x,$ $y,$ and $z$ in $\mathsf{L}.$
		
		The attentive reader may have observed that we have thus far assumed commutativity in our quantales. However, by relaxing the commutativity axiom, we naturally arrive at the concept of primitive ideals (as described in \cite{J56}) in quantales. In the forthcoming paper \cite{Gos23'}, our objective  is to explore Jacobson's structure theory for quantales. Specifically, we shall formulate and prove the Jacobson-Cheveley Density Theorem for quantales. In line with \cite{J45}, we shall also examine the Jacobson topology on the primitive ideals in a quantale.
	\end{cremark}
	

\end{document}